\DeclareMathOperator{\locdim}{locdim}
\DeclareMathOperator{\sr}{sr}
\DeclareMathOperator{\rr}{rr}
\DeclareMathOperator{\xrr}{xrr}
\DeclareMathOperator{\Lg}{Lg}
\DeclareMathOperator{\id}{id}
\newcommand{\subseteqRotatedUp}{\mathrel{\reflectbox{\rotatebox[origin=c]{90}{$\subseteq$}}}}
\newcommand{\andSep}{\,\,\,\text{ and }\,\,\,}
\newcommand{\Bdd}{\mathcal{B}}
\newcommand{\spec}{\mathrm{sp}}
\newcommand{\tuple}[1]{\mathbf{#1}}
\newcommand{\sa}{\text{sa}}
\newcommand{\RR}{{\mathbb{R}}}
\newcommand{\KK}{{\mathcal{K}}}
\newcommand{\NN}{{\mathbb{N}}}
\newcommand{\CC}{{\mathbb{C}}}
\newcommand{\ca}{$C^*$-algebra}
\numberwithin{equation}{section}
\newtheorem{lma}{Lemma}[section]
\newaliascnt{thmCt}{lma}
\newtheorem{thm}[thmCt]{Theorem}
\newaliascnt{corCt}{lma}
\newtheorem{cor}[corCt]{Corollary}
\newaliascnt{prpCt}{lma}
\newtheorem{prp}[prpCt]{Proposition}
\theoremstyle{definition}
\newaliascnt{pgrCt}{lma}
\newtheorem{pgr}[pgrCt]{}
\newaliascnt{dfnCt}{lma}
\newtheorem{dfn}[dfnCt]{Definition}
\newaliascnt{rmkCt}{lma}
\newtheorem{rmk}[rmkCt]{Remark}
\newaliascnt{rmksCt}{lma}
\newtheorem{rmks}[rmksCt]{Remarks}
\newaliascnt{qstCt}{lma}
\newtheorem{qst}[qstCt]{Question}
\newaliascnt{exaCt}{lma}
\newtheorem{exa}[exaCt]{Example}
\newcounter{theoremintro}
\newaliascnt{thmIntroCt}{theoremintro}
\newtheorem{thmIntro}[thmIntroCt]{Theorem}
\title{Real rank of extensions of C*-algebras}
\author{Hannes Thiel}
\address{Hannes~Thiel, 
Department of Mathematical Sciences, Chalmers University of Technology and University of
Gothenburg, Gothenburg SE-412 96, Sweden.}
\email{hannes.thiel@chalmers.se}
\urladdr{www.hannesthiel.org}
\subjclass[2010]%
{Primary
46L05, 
46L85; 
Secondary
46L45, 
46M20. 
}
\keywords{$C^*$-algebras, dimension theory, stable rank, real rank}
\thanks{
The author was partially supported by the Knut and Alice Wallenberg Foundation (KAW 2021.0140).
}
\date{\today}
\begin{document}

\begin{abstract}
Given a closed ideal $A$ in a \ca{} $E$, we develop techniques to bound the real rank of $E$ in terms of the real ranks of $A$ and $E/A$.
Building on work of Brown, Lin and Zhang, we obtain explicit computations if $A$ belongs to any of the following classes: 
(1)~\ca{s} with real rank zero, stable rank one and vanishing $K_1$-group;
(2)~simple, purely infinite \ca{s};
(3)~simple, $\mathcal{Z}$-stable \ca{s} with real rank zero;
(4)~separable, stable \ca{s} with an approximate unit of projections and the Corona Factorization Property.
\end{abstract}

\maketitle

\section{Introduction}

It is a classical result that a compact, Hausdorff space $X$ has covering dimension at most $n$ if and only if every continuous function $X \to \RR^{n+1}$ can be approximated by a continuous function $g \colon X \to \RR^{n+1}$ such that $0 \notin g(X)$.
Transferring this characterization to the setting of noncommutative spaces, Brown and Pedersen \cite{BroPed91CAlgRR0} defined the \emph{real rank} of a unital \ca{} $A$ to be at most $n$ if every tuple of $n+1$ self-adjoint elements in $A$ can be approximate arbitrarily closely by a tuple $(b_0,\ldots,b_n) \in A_\sa^{n+1}$ such that $\sum_j b_j^2$ is invertible;
see \cite[Proposition~3.3.2]{Pea75DimThy} and \autoref{dfn:rr} below.

Similar to the covering dimension of topological spaces, it is of interest to compute the real rank of \ca{s}.
But while the theory of covering dimension is well developed and has many tools available for computations, the situation is much murkier for the real rank.
Only few general methods are known, and in some situations, the behavior of the real rank substantially differs from that of the covering dimension.

For example, if $X$ and $Y$ are compact, Hausdorff spaces, then their product satisfies $\dim(X \times Y) \leq \dim(X) + \dim(Y)$; 
see \cite[Proposition~3.2.6]{Pea75DimThy}.
Analogously, one would expect that for the minimal tensor product of unital \ca{s}, one has $\rr(A \otimes B) \leq \rr(A) + \rr(B)$, but it is known that this can fail even if $A$ and $B$ have real rank zero.
For example, the algebra $\Bdd(H)$ of bounded linear operators on a separable, infinite-dimensional Hilbert space $H$ has real rank zero, but $\Bdd(H) \otimes \Bdd(H)$ does not;
see \cite{Osa99NonzeroRR}.
The exact value of the real rank of $\Bdd(H) \otimes \Bdd(H)$ is unknown.
There even exist examples of nuclear \ca{s} $A$ and $B$ of real rank zero such that $A \otimes B$ has nonzero real rank;
see \cite{KodOsa95RRTensProd, KodOsa01FS}.

\medskip

In this paper we develop methods to compute the real rank of an extension 
\[
0 \to A \to E \to B \to 0
\]
of \ca{s}.
(This means that $E$ is a \ca{} containing an ideal $A$ such that $E/A \cong B$.)
We aim to determine the real rank of $E$ in terms of those of~$A$ and~$B$.
Since the real rank of a nonunital \ca{} is defined as that of its minimal unitization, the real rank should more accurately be thought of as a noncommutative analog of local covering dimension;
see \cite[Section~2.2(ii)]{BroPed09Limits}.
For $\sigma$-compact, locally compact, Hausdorff spaces $X$, we have $\locdim(X) = \dim(X)$, but in general the local covering dimension may be strictly smaller than the covering dimension.
We refer to \cite[Chapter~5]{Pea75DimThy} for details.

Given a locally compact, Hausdorff space $X$ and an open subset $U \subseteq X$, one can deduce from Propositions~3.5.6, 5.2.1 and~5.2.2 in \cite{Pea75DimThy} that $\locdim(X) = \max\{ \locdim(U), \locdim(X\setminus U)\}$.
Analogously, one might expect that for an extension $0 \to A \to E \to B \to 0$ of \ca{s} one has $\rr(E) = \max\{ \rr(A), \rr(B) \}$, and in many situations this does indeed hold -- for example, if $E$ is a CCR algebra (\cite[Theorem~3.6]{Bro16HigherRRSR}), or if $A$ has only finite-dimensional irreducible representations (\cite[Theorem~3.11]{Bro16HigherRRSR}).

However, it is also known that the identity $\rr(E) = \max\{ \rr(A), \rr(B) \}$ can fail even if $A$ and $B$ are nuclear and have real rank zero.
In general, it is known that the real rank does not increase when passing to ideals or quotients, which implies that $\max\{ \rr(A), \rr(B) \} \leq \rr(E)$;
see \cite[Th\'{e}or\`{e}me~1.4]{Elh95RRExt}.
Our first result gives a general upper bound for $\rr(E)$:

\begin{thmIntro}[{\ref{prp:rrExt}}]
\label{thmA}
Given an extension $0 \to A \to E \to B\to 0$ of \ca{s}, we have
\[
\max\{\rr(A),\rr(B)\}
\leq \rr(E) 
\leq \rr(A) + \rr(B) + 1.
\]
\end{thmIntro}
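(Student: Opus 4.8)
The lower bound $\max\{\rr(A),\rr(B)\}\le\rr(E)$ is immediate from the cited monotonicity of the real rank under passage to ideals and quotients (Elhage, Th\'eor\`eme~1.4), so all the work is in the upper bound. Write $m=\rr(A)$ and $n=\rr(B)$, let $\pi\colon E\to B$ be the quotient map, and extend it to a unital surjection $\pi\colon\widetilde E\to\widetilde B$ with kernel $A$. Since the real rank is computed in the unitization, it suffices to approximate an arbitrary tuple $(x_0,\dots,x_{m+n+1})\in\widetilde E_\sa^{\,m+n+2}$ by one whose sum of squares is invertible in $\widetilde E$. The plan is to spend the first $n+1$ coordinates on achieving invertibility modulo $A$, and the remaining $m+1$ coordinates on promoting this to genuine invertibility inside the ideal; as $(n+1)+(m+1)=m+n+2$, this gives exactly $\rr(E)\le m+n+1$, and the $+1$ is what pays for gluing the two halves together.

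\emph{Step 1 (the quotient).} Applying $\rr(B)\le n$ to $(\pi(x_0),\dots,\pi(x_n))$, I perturb these images slightly so that $\sum_{j=0}^n\pi(x_j)^2$ becomes invertible in $\widetilde B$. Self-adjoint elements lift through $\pi$ with essentially no loss in norm, so I can transport this perturbation back to $\widetilde E$ and thereby assume, after replacing $x_0,\dots,x_n$ by nearby self-adjoint elements, that $h:=\sum_{j=0}^n x_j^2\in\widetilde E_+$ has $\pi(h)$ invertible, say $\pi(h)\ge\delta$ for some $\delta>0$.

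\emph{Step 2 (the ideal).} Fix a continuous function $f\colon[0,\infty)\to[0,1]$ equal to $1$ on $[0,\delta/4]$ and to $0$ on $[\delta/2,\infty)$, and put $a:=f(h)$ and $g:=f(h)^{1/2}$. Since $f$ vanishes on the spectrum of $\pi(h)$, both $a$ and $g$ lie in $A_+$; moreover $a$ is close to $1$ exactly where $h$ is close to $0$, and one checks via functional calculus that $h+a$ is already invertible. The goal is to reproduce the positivity supplied by $a$ using squares of controlled perturbations of $x_{n+1},\dots,x_{m+n+1}$. To do so I compress into the ideal: the elements $g x_{n+1}g,\dots,g x_{m+n+1}g$ lie in $A_\sa$, and $\rr(A)\le m$ yields $b_{n+1},\dots,b_{m+n+1}\in\widetilde A_\sa$ arbitrarily close to them with $\sum_j b_j^2$ invertible in $\widetilde A$. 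I then set $y_j:=x_j$ for $j\le n$ and
\[
y_j:=x_j-a\,x_j\,a+g\,b_j\,g\qquad(n<j\le m+n+1),
\]
which are self-adjoint and satisfy $y_j-x_j=g(b_j-g x_j g)g$, so $\|y_j-x_j\|$ is as small as the approximation afforded by $\rr(A)$. The assertion is that $\sum_{j=0}^{m+n+1}y_j^2=h+\sum_{j>n}y_j^2$ is then invertible.

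\emph{Main obstacle.} The crux is the verification of this last invertibility, and this is where the non-unital nature of $A$ makes itself felt. For nonunital $A$ the tuple produced by $\rr(A)$ has $\sum_j b_j^2$ invertible in $\widetilde A$ only by virtue of its scalar parts, which are global and would wreck the approximation on the part of the spectrum of $h$ where $h$ is large; the cutoff $f(h)\in A$ is inserted precisely to localize the ideal solution to the region where $h$ is small and to splice it to the unchanged elements $x_j$ elsewhere. To establish the lower bound I expect to estimate $h+\sum_{j>n}y_j^2$ separately on the spectral regions $\{h\ge\delta/4\}$, where the summand $h$ alone dominates, and $\{h\le\delta/4\}$, where $g\approx 1$ forces $y_j\approx b_j$ and hence $\sum_{j>n}y_j^2\approx\sum_j b_j^2$ is bounded below; the genuinely technical part is controlling the cross terms arising from the failure of $h$ (equivalently $a$ and $g$) to commute with the $x_j$ and $b_j$, which are absorbed by taking the approximation in Step~2 sufficiently fine. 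Assembling the two steps handles an arbitrary $(m+n+2)$-tuple and yields $\rr(E)\le\rr(A)+\rr(B)+1$.
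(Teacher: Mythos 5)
Your overall architecture is the same as the paper's -- spend $n+1$ coordinates making the tuple unimodular modulo $A$, then fix the remaining $m+1$ coordinates inside the ideal -- and your Step~1 is essentially the first half of the paper's proof. The problem is the step you yourself flag as the ``main obstacle'': it is not resolved, and the resolution you propose does not work. The cross terms in $h+\sum_{j>n}y_j^2$ are governed by the commutators of $a=f(h)$ and $g=f(h)^{1/2}$ with the fixed elements $x_j$ and with $b_j\approx gx_jg$; these are determined by the original data and do not shrink when you make the approximation $\|b_j-gx_jg\|$ in Step~2 finer. Concretely, suppose $\varphi$ is a state with $\varphi(h)=0$, so that $\varphi(1-a)=\varphi(1-g)=0$. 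To contradict $\sum_j b_j^2\geq\gamma$ you would need $\varphi(y_j^2)\to 0$ to force $\varphi(b_j^2)$ small, which requires controlling terms like $\varphi\big(b_j(1-g)b_j\big)$; but $\varphi(1-g)=0$ does \emph{not} control $\varphi\big(b_j(1-g)b_j\big)$ -- already in $M_2(\CC)$, with $g=\mathrm{diag}(1,0)$, $\varphi=\langle\,\cdot\,e_1,e_1\rangle$ and $b$ the flip symmetry, one has $\varphi(1-g)=0$ while $\varphi\big(b(1-g)b\big)=1$. So the lower bound on the two spectral regions cannot be spliced, and the invertibility of $\sum_j y_j^2$ is not established.

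The paper circumvents spectral splicing entirely. After perturbing and lifting the quotient coordinates to $y_1',\ldots,y_l'$, it chooses $z_1,\ldots,z_l$ and sets $e:=z_1y_1'+\cdots+z_ly_l'$, so that $\pi(e)=1_B$ \emph{and} $e$ is manifestly a left combination of the new coordinates. It then replaces $x_i$ by $d_i:=x_i-e^*x_ie\in A_\sa$, perturbs $(d_1,\ldots,d_k)$ to a unimodular tuple $(d_1',\ldots,d_k')$ in $\widetilde A$ (unitally embedded in $E$), and sets $x_i':=d_i'+e^*x_ie$. Unimodularity of the full tuple is then checked purely algebraically: the left ideal $L$ it generates contains $e$, hence each $e^*x_ie$, hence each $d_i'$, and $\sum_i(d_i')^2$ is invertible in $\widetilde A\subseteq E$, so $L=E$. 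No positivity estimates or commutator control are needed. If you want to salvage a splicing argument of your kind, look at the proof of \autoref{prp:rr-xrr-rrM} in the paper: there a genuine splice $b_jh_\lambda+a_j(1-h_\lambda)$ is carried out, but it works only because $(h_\lambda)$ is a \emph{quasicentral} approximate unit, so the commutator errors vanish along the net -- a feature your single fixed cutoff $f(h)$ cannot provide.
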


An interesting consequence of \autoref{thmA} is that $\rr(E)$ is restricted to only two possible values whenever $A$ or $B$ has real rank zero.
In particular, if $\rr(A)=\rr(B)=0$, then $\rr(E) \in \{0,1\}$, and it is known that $\rr(E)=0$ is equivalent to the $K$-theoretic condition that the index map $K_0(B) \to K_1(A)$ vanishes.
This leads to a computation of the real rank of an extension of real rank zero \ca{s};
see \autoref{prp:rrExt-rrzero}.
We use this to show that the stable multiplier algebra of the Calkin algebra has real rank one;
see \autoref{exa:calkin}.

In general it is not known where in the range between $\max\{\rr(A),\rr(B)\}$ and $\rr(A) + \rr(B) + 1$ the real rank of $E$ can fall.
I am not aware of any example where $\rr(E) \geq \max\{\rr(A),\rr(B)\} + 2$.

A low real rank can be considered as a regularity property, and real rank zero is undoubtedly the most important since it has numerous consequences for the structure and classification theory of such \ca{s}.
But other nonzero low values of the real rank are also relevant.
For example, if a \ca{} has real rank at most one, then its full elements are dense, and this is of interest in connection to the study of the Global Glimm Problem \cite{ThiVil23Glimm, ThiVil24SoftOps, AsaThiVil23arX:RksSoftOps}.

\medskip

To improve the upper bound in \autoref{thmA}, we introduce the \emph{extension real rank} $\xrr(A)$ for a \ca{} $A$.
This invariant captures whether tuples of self-adjoint elements in the multiplier algebra of $A$ can be perturbed in a certain way;
see \autoref{dfn:xrr}.
We prove:

\begin{thmIntro}[{\ref{prp:rrExt-xrr-rr}}]
\label{thmB}
Given an extension $0 \to A \to E \to B\to 0$ of \ca{s}, we have
\[
\max \big\{ \rr(A), \rr(B) \big\}
\leq \rr(E)
\leq \max \big\{ \xrr(A), \rr(B) \big\}.
\]
\end{thmIntro}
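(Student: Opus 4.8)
The lower bound $\max\{\rr(A),\rr(B)\} \le \rr(E)$ is the standard monotonicity of the real rank under passage to ideals and quotients, exactly as in \autoref{thmA} (following \cite{Elh95RRExt}), so the real content is the upper bound $\rr(E) \le \max\{\xrr(A),\rr(B)\}$. Set $n := \max\{\xrr(A),\rr(B)\}$. First I would reduce to the unital case: unitizing the extension yields $0 \to A \to \tilde E \to \tilde B \to 0$ with $\rr(\tilde E) = \rr(E)$ and $\rr(\tilde B) = \rr(B)$, while $M(A)$, and hence $\xrr(A)$, is unchanged. I may therefore assume $E$ and $B$ are unital and, by Busby's theorem, describe the extension through its (now unital) Busby invariant $\tau \colon B \to M(A)/A =: Q(A)$, realizing $E$ as the pullback
\[
E \cong \big\{ (m,b) \in M(A) \oplus B : \rho(m) = \tau(b) \big\},
\]
where $\rho \colon M(A) \to Q(A)$ is the quotient map. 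A self-adjoint tuple in $E$ is then a compatible pair of self-adjoint tuples $(x_0,\ldots,x_n) \in M(A)_\sa^{n+1}$ and $(\beta_0,\ldots,\beta_n) \in B_\sa^{n+1}$ with $\rho(x_j) = \tau(\beta_j)$ for all $j$.

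Given such a tuple and $\varepsilon>0$, the plan proceeds in three perturbation steps. Using $\rr(B) \le n$, I first approximate $(\beta_j)_j$ to within $\varepsilon$ by a tuple $(b_j)_j \in B_\sa^{n+1}$ with $\sum_j b_j^2$ invertible in $B$. Since $\rho$ is surjective and self-adjoint elements lift along $\rho$ with control on the norm of the perturbation, I can then replace $(x_j)_j$ by a nearby tuple $(x_j')_j \in M(A)_\sa^{n+1}$ satisfying $\rho(x_j') = \tau(b_j)$, so that $(x_j', b_j)_j$ is again a compatible tuple in $E$, close to the original. Because $\tau$ is a unital $*$-homomorphism, $\sum_j \rho(x_j')^2 = \tau\!\big(\sum_j b_j^2\big)$ is now invertible in $Q(A)$.

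At this point the $M(A)$-component $(x_j')_j$ is a self-adjoint tuple whose image in the corona $Q(A)$ has invertible sum of squares, which is precisely the configuration controlled by the extension real rank. Invoking $\xrr(A) \le n$ (\autoref{dfn:xrr}), I obtain self-adjoint elements $a_0,\ldots,a_n \in A$ of small norm with $\sum_j (x_j' + a_j)^2$ invertible in $M(A)$. The decisive point is that this correction lies in $A$, so $\rho(x_j' + a_j) = \rho(x_j') = \tau(b_j)$; hence $(x_j' + a_j, b_j)_j$ is still a compatible tuple in $E$, and the quotient data arranged in the previous step is left untouched. Finally, in the pullback an element $(m,b)$ is invertible in $E$ exactly when $m$ is invertible in $M(A)$ and $b$ is invertible in $B$ — if both hold, then $(m^{-1},b^{-1})$ is again compatible and inverts $(m,b)$ — so $\sum_j (x_j'+a_j,\, b_j)^2 = \big(\sum_j (x_j'+a_j)^2,\ \sum_j b_j^2\big)$ is invertible in $E$. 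Since the total perturbation is controlled by $\varepsilon$ and the $\|a_j\|$, this gives $\rr(E) \le n$.

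The main obstacle is not any single estimate but the orchestration of the two perturbations so that they do not interfere: the quotient part must be fixed first using $\rr(B)$, and the subsequent correction must be confined to $A$ so as to be invisible in $Q(A)$ — which is exactly what makes $\xrr(A)$, defined through the multiplier algebra, the right invariant to close the gap left open in \autoref{thmA}. The remaining technical ingredients — the reduction to the unital case, the norm-controlled self-adjoint lifting along $\rho$, and the coordinatewise characterization of invertibility in the pullback — I expect to be routine.
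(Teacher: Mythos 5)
Your proposal is correct and follows essentially the same route as the paper, which factors the argument through the property $(\Lambda_n)$ (\autoref{prp:LnGeneralExt} and \autoref{prp:rrExt-Ln-rr}): reduce to the unital case, fix the quotient data first using $\rr(B)\leq n$, lift self-adjointly, and then correct the $M(A)$-component by a small self-adjoint perturbation lying in $A$ so that the quotient data is untouched. The only cosmetic difference is that you verify unimodularity of the final tuple via componentwise invertibility of $\sum_j (x_j'+a_j,\,b_j)^2$ in the Busby pullback, whereas the paper's \autoref{prp:LnGeneralExt} runs a left-ideal generation argument; both are valid.
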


Thus, if $\xrr(A) \leq 1$, then $\rr(E) \leq \max\{1,\rr(B)\}$, and we can completely determine $\rr(E)$ in terms of $\rr(B)$, $\rr(A)$ and $K$-theoretic information;
see \autoref{prp:ComputeWithLowXRR}.

We prove that the extension real rank of a \ca{} is bounded by the real rank of its multiplier algebra;
see \autoref{prp:rr-xrr-rrM}.
We also show that the extension real rank enjoys certain permanence properties that the real rank of the multiplier algebra does not have;
see \autoref{sec:Permanence}.
Using these methods, we verify in \autoref{sec:LowXRR} that large classes of \ca{s} have extension real rank at most one, which in turn allows us to completely determine the real rank of extensions by such algebras;
see Propositions~\ref{prp:Classes-xrr0} and~\ref{prp:Classes-xrr1}.

\subsection*{Conventions}

We use $\NN$ to denote the natural numbers including $0$. 
Given a \ca{} $A$, we let $A_\sa$ denote its set of self-adjoint elements.
We let $\widetilde{A}$ denote the minimal unitization of $A$.
Note that $\widetilde{A}=A$ if $A$ is unital.
We will also consider the forced unitization of~$A$, which agrees with $\widetilde{A}$ whenever $A$ is nonunital, and which is given as $A \oplus \CC$ whenever $A$ is unital.

By an ideal in a \ca{} we mean a closed, two-sided ideal, and by a morphism between \ca{s} we mean a $\ast$-homomorphism.
We use $\KK$ to denote the \ca{} of compact operators on a separable, infinite-dimensional Hilbert space.
We let $\spec(a)$ denote the spectrum of an operator $a$.

\subsection*{Acknowledgements}

The author thanks Eusebio Gardella for valuable feedback on a first version of this paper, and Ilijas Farah for valuable comments regarding the L\"{o}wenheim-Skolem condition for the real rank.

The author also thanks the anonymous referee for beneficial feedback.

\section{The real rank of extensions} 
\label{sec:rrExt}

In this section, we first recall the definition and some basic properties of the real rank.
Given an extension $0 \to A \to E \to B \to 0$ of \ca{s}, we prove the folklore result that the real rank of $E$ is bounded by the maximum of the real rank of $B$ and of the multiplier algebra $M(A)$;
see \autoref{prp:rrExtUsingMult}.
We also show that $\rr(E) \leq \rr(A)+\rr(B)+1$;
see \autoref{prp:rrExt}.
This allows us to compute the real rank of extensions of real rank zero \ca{s};
see \autoref{prp:rrExt-rrzero}.

\medskip

Let $A$ be a unital \ca{}.
We use $A^{-1}$ to denote the set of invertible elements in $A$.
Set
\[
\Lg_n(A)_\sa
:= \left\{ (a_1,\ldots,a_n)\in (A_\sa)^n \, \middle| \, \sum_{k=1}^n a_k^2\in A^{-1} \right\}.
\]
The abbreviation `Lg' stands for `left generators', and the reason is that a tuple $(a_1,\ldots,a_n)\in A^n_\sa$ belongs to $\Lg_n(A)_\sa$ if and only if $\{a_1,\ldots,a_n\}$ generates $A$ as a left (not necessarily closed) ideal, that is, $Aa_1+\ldots+Aa_n=A$.
A tuple in $(A_\sa)^n$ is said to be \emph{unimodular} if it belongs to $\Lg_n(A)_\sa$.

\begin{dfn}[{Brown-Pedersen, \cite{BroPed91CAlgRR0}}]
\label{dfn:rr}
The \emph{real rank} of a unital \ca{}~$A$, denoted by $\rr(A)$, is the least integer $n\geq 0$ such that $\Lg_{n+1}(A)_\sa$ is dense in $A_\sa^{n+1}$.
We set $\rr(A)=\infty$ if $\Lg_{n+1}(A)_\sa$ is not dense in~$A_\sa^{n+1}$ for any $n$.

If $A$ is a nonunital \ca{}, then $\rr(A):=\rr(\widetilde{A})$. 
\end{dfn}

\begin{rmk}
\label{rmk:Lg-dense}
Let $A$ be a unital \ca, and let $n \geq 1$.
If $\Lg_n(A)_\sa$ is dense in $A^n_\sa$, then it easily follows that $\Lg_{n+1}(A)_\sa$ is dense in $A^{n+1}_\sa$.
This should be contrasted with \autoref{rmk:xrr-Ln}.
\end{rmk}

We will use the following folklore result, which was noted in \cite[Remark~3.13]{Bro16HigherRRSR}.

\begin{prp}
\label{prp:rrExtUsingMult}
Let $0 \to A \to E \to B \to 0$ be an extension of \ca{s}.
Then
\[
\rr(E) \leq \max\big\{ \rr(M(A)), \rr(B) \big\}. 
\]
\end{prp}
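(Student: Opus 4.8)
The plan is to reduce a real-rank computation for $E$ to one involving the multiplier algebra $M(A)$, exploiting the canonical morphism $E \to M(A)$ together with the quotient map $E \to B$. The key structural fact is that $A$ sits inside $E$ as an ideal, which gives rise to a unital $\ast$-homomorphism from the forced unitization $\widetilde{E}$ into the pullback $M(A) \oplus_{M(A)/A} \widetilde{B}$, where the map $E \to M(A)$ is the one classifying $A \subseteq E$ (sending $e$ to the multiplier $a \mapsto ea$), and where $M(A)/A$ is the corona algebra of $A$. I would begin by recording this: the element $e \in E$ determines both a multiplier of $A$ and an image in $B$, and these agree modulo $A$ under the identification of $B$ with a subalgebra of the corona $M(A)/A$.

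\medskip

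\noindent\textbf{Main argument.} Write $n := \max\{\rr(M(A)), \rr(B)\}$, and assume $n < \infty$ (otherwise there is nothing to prove). I want to show that $\Lg_{n+1}(\widetilde{E})_\sa$ is dense in $(\widetilde{E}_\sa)^{n+1}$. So fix a tuple $\tuple{x} = (x_0,\ldots,x_n)$ of self-adjoint elements of $\widetilde{E}$ and $\varepsilon > 0$; I seek a nearby unimodular tuple. First I would push $\tuple{x}$ forward to $B$ and, since $\rr(B) \le n$, perturb the image within $\varepsilon$ to a unimodular tuple in $\Lg_{n+1}(\widetilde{B})_\sa$. Lifting this perturbation back along the quotient map $\widetilde{E} \to \widetilde{B}$ (using that self-adjoint lifts of self-adjoint elements exist, and that small perturbations lift to small perturbations), I may assume $\tuple{x}$ itself maps to a unimodular tuple in $\widetilde{B}$, i.e.\ that $\sum_k x_k^2$ is invertible modulo $A$. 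Equivalently, $\sum_k x_k^2$ is invertible in $M(A)$ modulo the corona $M(A)/A$ after applying the map $\widetilde{E} \to M(\widetilde{A})$; the point is that the defect of invertibility now lives entirely inside $A$.

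\medskip

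\noindent\textbf{The corona step.} Having arranged that $\sum_k x_k^2$ is invertible in the quotient, the remaining task is to correct the tuple by a small element of $A$ (more precisely, of $M(A)$) so that $\sum_k x_k^2$ becomes genuinely invertible in $M(A)$, hence in $\widetilde{E}$. This is where I would use $\rr(M(A)) \le n$: viewing the images of $x_0,\ldots,x_n$ in $M(A)$, I approximate them within $\varepsilon$ by a unimodular tuple $(b_0,\ldots,b_n) \in \Lg_{n+1}(M(A))_\sa$. The subtlety is that I must perform the $M(A)$-perturbation and the $B$-perturbation \emph{simultaneously and compatibly}, so that the corrected elements genuinely come from $\widetilde{E}$ rather than from $M(A)$ alone. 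The clean way is to argue in the pullback: an element of $\widetilde{E}$ is precisely a compatible pair, so a perturbation that is small and unimodular in $M(A)$ and whose image in $B$ is left undisturbed (being supported in $A$) assembles to a perturbation in $\widetilde{E}$. Since invertibility of $\sum_k b_k^2$ in $M(A)$ implies invertibility in the larger unital algebra $\widetilde{E}$ (an element of an ideal's multiplier algebra that is invertible there need not be invertible in $\widetilde E$ in general, so one argues via the corona: invertibility modulo $A$ plus invertibility in $M(A)$ forces invertibility in $\widetilde E$), the resulting tuple is the desired unimodular approximant.

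\medskip

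\noindent\textbf{Expected obstacle.} The main difficulty is the compatibility of the two perturbations across the pullback diagram: the $B$-adjustment fixes the tuple modulo $A$, while the $M(A)$-adjustment must then act only by elements of $A$ so as not to destroy that, and one must verify that invertibility of $\sum_k b_k^2$ in $M(A)$ together with invertibility of its image in $M(A)/A$ genuinely yields invertibility in $\widetilde{E}$. Keeping track of which algebra each inverse lives in, and checking that the forced-unitization conventions do not introduce spurious scalar components, is the delicate bookkeeping that the formal proof must handle carefully.
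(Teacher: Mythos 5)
Your structural setup coincides with the paper's: both realize $E$ (after forced unitization) as the Busby pullback of $M(A) \to M(A)/A$ and $\tau \colon B \to M(A)/A$, and both reduce the problem to a real-rank estimate for such a pullback. The paper, however, disposes of that estimate in one line by citing \cite[Proposition~1.6]{NagOsaPhi01RanksAlgCts}, whereas you attempt to prove it directly -- and the direct argument as written has a genuine gap at exactly the point you flag as an ``expected obstacle.'' After arranging that the tuple has unimodular image in $B$ (and hence in $M(A)/A$), you invoke $\rr(M(A)) \leq n$ to replace the images $\sigma(x_j)$ by a nearby unimodular tuple $(b_0,\ldots,b_n)$ in $M(A)$, and then assert that this perturbation is ``supported in $A$'' so that it pairs with the untouched $B$-coordinate to give an element of the pullback. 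Nothing in the definition of real rank guarantees this: the nearby unimodular tuple produced by $\rr(M(A)) \leq n$ need not satisfy $b_j - \sigma(x_j) \in A$, and if it does not, then $\pi_A(b_j) \neq \tau(\pi(x_j))$ and the pair $(b_j,\pi(x_j))$ simply fails to lie in $E$. The obvious repair -- perturb freely in $M(A)$ and then correct the $B$-coordinate instead -- also fails, because the Busby map $\tau$ is not surjective, so the small discrepancy $\pi_A(b_j) - \tau(\pi(x_j))$ in the corona cannot in general be lifted to a small correction in $B$.

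What you need is precisely the \emph{relative} perturbation property: a tuple in $M(A)_\sa^{n+1}$ with unimodular image in $M(A)/A$ can be moved to a unimodular tuple in $M(A)$ \emph{without changing its image in the corona}. This is the property $(\Lambda_n)$ of \autoref{dfn:xrr}, and the fact that it follows from $\rr(M(A)) \leq n$ is \autoref{prp:rr-xrr-rrM}, whose proof is a nontrivial argument with quasicentral approximate units (the interpolation $c_{j,\lambda} = h_\lambda^{1/2} b_j h_\lambda^{1/2} + (1-h_\lambda)^{1/2} a_j (1-h_\lambda)^{1/2}$); it is not formal bookkeeping. A smaller, fixable inaccuracy: invertibility of $\sum_k b_k^2$ in $M(A)$ alone does not yield invertibility in $E$ (which is not an intermediate algebra between $A$ and $M(A)$ in general); the correct statement is that an element of the pullback is invertible if and only if both of its coordinates are, since the pullback is closed under taking inverses in $M(A) \oplus B$. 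To complete your approach you would either have to prove the relative perturbation statement (essentially reproving \autoref{prp:rr-xrr-rrM}) or cite the pullback lemma of Nagisa--Osaka--Phillips as the paper does.
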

\begin{proof}
By \cite[Proposition~1.6]{NagOsaPhi01RanksAlgCts}, if a \ca{} $C$ is the pullback along a morphism $F \to H$ and a surjective morphism $G \to H$, then the real rank of $C$ is dominated by the maximum of $\rr(F)$ and $\rr(G)$.
(Note that using forced unitizations one can remove the unitality assumptions in \cite[Proposition~1.6]{NagOsaPhi01RanksAlgCts}.)

Now the result follows using the standard result of Busby that $E$ can be realized as the pullback along the natural quotient map $M(A) \to M(A)/A$ and the Busby map $B \to M(A)/A$.
\end{proof}

The next result provides rough bounds for the real rank of an extension.

\begin{thm}
\label{prp:rrExt}
Let $0 \to A \to E \to B \to 0$ be an extension of \ca{s}.
Then
\[
\max\big\{ \rr(A),\rr(B) \big\} 
\leq \rr(E) 
\leq \rr(A) + \rr(B) + 1.
\]
\end{thm}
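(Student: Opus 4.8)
The lower bound is already available: the real rank does not increase when passing to an ideal or a quotient (\cite[Th\'eor\`eme~1.4]{Elh95RRExt}), so $\rr(A)\le\rr(E)$ and $\rr(B)=\rr(E/A)\le\rr(E)$, giving $\max\{\rr(A),\rr(B)\}\le\rr(E)$. All the work is therefore in the upper bound. Note that \autoref{prp:rrExtUsingMult} will not suffice here, since $\rr(M(A))$ need not be controlled by $\rr(A)$; instead the plan is a direct two-block perturbation, where the ``$+1$'' appears precisely because the $n+1=(p+1)+(q+1)$ self-adjoint coordinates split into a block that makes the tuple unimodular modulo $A$ and a block that repairs invertibility inside $A$.

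First I would reduce to the unital case: unitizing the whole sequence gives an extension $0\to A\to\widetilde E\to\widetilde E/A\to0$ with the same ideal and with $\rr(\widetilde E)=\rr(E)$ and $\rr(\widetilde E/A)=\rr(B)$, so I may assume $E$, and hence $B=E/A$, is unital, with $A$ a possibly nonunital ideal. Write $p=\rr(A)$, $q=\rr(B)$ and $n=p+q+1$; the goal is that $\Lg_{n+1}(E)_\sa$ is dense in $E_\sa^{n+1}$. Given a tuple $(x_0,\dots,x_n)$ and $\epsilon>0$, I would first use $\rr(B)=q$: density of $\Lg_{q+1}(B)_\sa$ lets me perturb the images of the last $q+1$ coordinates to a unimodular tuple in $B$ and lift this perturbation self-adjointly and within $\epsilon$ to $y_{p+1},\dots,y_n\in E_\sa$. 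Setting $\tilde c:=\sum_{j=p+1}^n y_j^2$, its image $\bar{\tilde c}$ is positive and invertible in $B$, hence $\bar{\tilde c}\ge\delta$ for some $\delta>0$. Thus $\tilde c$ is invertible modulo $A$, and for any self-adjoint $w_0,\dots,w_p$ the element $\sum_{j=0}^p w_j^2+\tilde c$ is automatically invertible modulo $A$.

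It remains to perturb $x_0,\dots,x_p$ within $\epsilon$ so that $\tilde c+\sum_{j=0}^p w_j^2$ is genuinely invertible in $E$; together with the previous step this produces a tuple $(w_0,\dots,w_p,y_{p+1},\dots,y_n)\in\Lg_{n+1}(E)_\sa$ within $\epsilon$ of the original. I would isolate this as a lemma: if $A$ is an ideal in $E$ with $\rr(A)\le p$ and $c\in E$ is positive with $\bar c$ invertible in $E/A$, then every $(x_0,\dots,x_p)\in E_\sa^{p+1}$ can be perturbed within $\epsilon$ so that $c+\sum_{j=0}^p w_j^2$ is invertible in $E$. To prove it, functional calculus produces the positive element $e:=(\delta-c)_+$, which lies in $A$ because $\bar c\ge\delta$ and which satisfies $c+e\ge\delta\,1_E$; hence invertibility only has to be arranged on the region where $c$ is small, and that region sits inside $A$. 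Applying density of $\Lg_{p+1}(\widetilde A)_\sa$ to the compressed tuple $(ex_0e,\dots,ex_pe)\in\widetilde A_\sa^{p+1}$ yields a unimodular tuple over $\widetilde A$, and choosing the corresponding perturbations supported in $A$ keeps each $w_j$ within $\epsilon$ of $x_j$.

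The hard part will be the gluing inside this lemma. In the commutative model the region where $c$ is small and its complement are disjoint, and a partition of unity immediately combines the invertibility produced by $\rr(A)\le p$ on the ideal region with the lower bound $c\ge\delta$ off it. In the noncommutative setting these two regions do not commute with the $x_j$, so I would instead localize the perturbation using a function $g(c)$ that equals $1$ where $c$ is small and $0$ where $c\ge\delta$, and then estimate the resulting off-diagonal terms against the spectral gap $\delta$. Making these cross-term estimates rigorous, so that the corner invertibility coming from $\rr(A)\le p$ upgrades to invertibility of $c+\sum_{j=0}^p w_j^2$ in all of $E$, is the technical heart of the argument; the reduction to the unital case, the quotient step, and the lifting are otherwise routine.
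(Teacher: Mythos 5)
Your overall architecture matches the paper's: reduce to the unital case, perturb the last $q+1$ coordinates so their images become unimodular in $B$, and then fix up the first $p+1$ coordinates using $\rr(A)\leq p$. The lower bound and the first (quotient) stage are fine. However, there is a genuine gap at exactly the point you flag yourself: the lemma asserting that $(x_0,\dots,x_p)$ can be perturbed so that $c+\sum_j w_j^2$ becomes invertible is never proved. Your sketch compresses the $x_j$ into $\widetilde A$, applies $\rr(A)\leq p$ there, and then needs to upgrade invertibility of $\sum_j (d_j')^2$ in $\widetilde A$ to invertibility of $c+\sum_j w_j^2$ in $E$; the ``cross-term estimates against the spectral gap'' that you defer are not a routine verification but the entire content of the step, and the localization by $g(c)$ you propose runs directly into the noncommutativity obstruction you name (neither $g(c)$ nor $(\delta-c)_+$ commutes with the $x_j$, so the off-diagonal terms are not controlled by $\delta$ in any obvious way). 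As written, the proof is incomplete.

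The paper avoids this issue entirely by never estimating a sum of squares. It uses that a self-adjoint tuple lies in $\Lg_{k+l}(E)_\sa$ if and only if it generates $E$ as a left ideal. Concretely: after lifting the unimodular tuple in $B$ to $y_1',\dots,y_l'\in E_\sa$, one also lifts the coefficients witnessing unimodularity to get $e:=z_1y_1'+\dots+z_ly_l'$ with $\pi(e)=1_B$. Then $d_i:=x_i-e^*x_ie$ lies in $A_\sa$, is perturbed to a unimodular tuple $(d_1',\dots,d_k')$ over $\widetilde A$, and one sets $x_i':=d_i'+e^*x_ie$. The left ideal $L$ generated by $x_1',\dots,x_k',y_1',\dots,y_l'$ contains $e$ (a left combination of the $y_j'$), hence each $e^*x_ie$, hence each $d_i'$; since $\sum_i(d_i')^2$ is invertible in $\widetilde A$ and therefore in $E$, one gets $L=E$. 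No spectral-gap or cross-term analysis is needed. I recommend you replace the unproven gluing lemma with this left-ideal argument; alternatively, if you insist on proving your lemma by norm estimates, you would need a quasicentral approximate unit argument of the kind used in \autoref{prp:rr-xrr-rrM}, which is substantially more work than the phrase ``estimate the off-diagonal terms'' suggests.
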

\begin{proof}
By \cite[Th\'{e}or\`{e}me~1.4]{Elh95RRExt}, the real rank does not increase when passing to ideals or quotients.
This implies the first inequality.

Let us show the second inequality.
We identify $A$ with an ideal in $E$ such that~$B$ is naturally isomorphic to~$E/A$, and we let $\pi\colon E \to B$ denote the quotient map.
If~$E$ is nonunital, then we consider $A$ as an ideal of $\widetilde{E}$.
Then $\widetilde{E}/A$ is isomorphic to the forced unitization of $B$, whence $\rr(\widetilde{E}/A)=\rr(B)$.
It then suffices to show that $\rr(\widetilde{E})\leq\rr(A)+\rr(\widetilde{E}/A)$.
Thus, we may assume from now on that $E$ is unital.

Set $k:=\rr(A)+1$ and $l:=\rr(B)+1$, which we may assume are finite.
To show that $\Lg_{k+l}(E)_\sa$ is dense in $E^{k+l}_\sa$, let $\varepsilon>0$ and let
\[
(x_1,\ldots,x_k,y_1,\ldots,y_l)\in E^{k+l}_\sa.
\]
be given.
Set
\[
b_1 := \pi(y_1), \ldots, b_l:=\pi(y_l).
\]
Using that $\rr(B)\leq l-1$, we find $(b_1',\ldots,b_l')\in\Lg_l(B)_\sa$ such that $\|b_j-b_j'\|<\varepsilon$ for each $j$.
Choose $c_1,\ldots,c_l\in B$ such that $c_1b_1'+\ldots+c_lb_l'=1_B$.
Using that self-adjoint elements in quotients can be lifted to self-adjoint elements, and using standard properties of the quotient norm, we can choose $y_1',\ldots,y_l'\in E_\sa$ and $z_1,\ldots,z_l\in E$ such that
\[
\pi(y_j')=b_j', \andSep
\pi(z_j)=c_j, \andSep
\|y_j-y_j'\|<\varepsilon \quad \text{ for $j=1,\ldots,l$.}
\]

Set $e:=z_1y_1'+\ldots+z_ly_l' \in E$, and set
\[
d_1 := x_1 - e^*x_1e,
\quad \ldots, \quad
d_k := x_k - e^*x_ke.
\]
Note that $\pi(e)=1_B$, and therefore the elements $d_1,\ldots,d_k$ belong to $A_\sa$.
Using that $\rr(A)\leq k-1$, we find $(d_1',\ldots,d_k')\in\Lg_k(\tilde{A})_\sa$ such that $\|d_i-d_i'\|<\varepsilon$ for each~$i$.
By regarding $\widetilde{A}$ as a unital subalgebra of $E$, we view each $d_i'$ as an element in~$E_\sa$.
Then $(d_1',\ldots,d_k')\in\Lg_k(E)_\sa$ since $\sum_i (d_i')^2$ is invertible in $\widetilde{A}$ and therefore also in $E$.
Set
\[
x_1' := d_1' + e^*x_1e,
\quad \ldots, \quad
x_k' := d_k' + e^*x_ke.
\]
For each $i$, we have
\[
\| x_i-x_i' \|
= \big\| (d_i + e^*x_ie) - (d_1' + e^*x_1e) \big\|
= \| d_i - d_i' \|
< \varepsilon.
\]
It remains to verify that $(x_1',\ldots,x_k',y_1',\ldots,y_l')$ belongs to $\Lg_{k+l}(E)_\sa$.
Let $L$ be the left ideal generated by $x_1',\ldots,x_k',y_1',\ldots,y_l'$.
By construction, we have $e\in L$.
Hence, for each $i$ we obtain $e^*x_ie\in L$, and then $d_i'\in L$.
Since $d_1',\ldots,d_k'$ generate~$E$ as a left ideal, we obtain $L=E$.
\end{proof}

It follows immediately from \autoref{prp:rrExt} that $\rr(E) \leq 1$ if $E$ is an extension of real rank zero \ca{s}.
To complete the picture, we recall the characterization \cite[Proposition~4]{LinRor95ExtLimitCircle} of when real rank zero passes to extensions.
This result has also (partially) been obtained in \cite[Theorem~3.14]{BroPed91CAlgRR0} and \cite[Lemma~2.4]{Zha92RR0CoronaMultiplier1}.

\begin{prp}[{\cite[Proposition~4]{LinRor95ExtLimitCircle}}]
\label{prp:rrzeroExt}
Let $0 \to A \to E \to B \to 0$ be an extension of \ca{s}.
Assume that $\rr(A)=\rr(B)=0$.
Then the following are equivalent:
\begin{enumerate}
\item
We have $\rr(E)=0$.
\item
The index map $K_0(B)\to K_1(A)$ vanishes.
\item
The natural map $K_0(E)\to K_0(B)$ is surjective.
\item
Every projection in $B$ can be lifted to a projection in $E$.
\end{enumerate}
\end{prp}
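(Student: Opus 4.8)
The plan is to treat the purely $K$-theoretic equivalence (2)$\Leftrightarrow$(3) on its own and to organise everything else around the lifting condition~(4), establishing the cycle (1)$\Rightarrow$(4)$\Rightarrow$(1), together with (1)$\Rightarrow$(3)$\Leftrightarrow$(2)$\Rightarrow$(4). The equivalence (2)$\Leftrightarrow$(3) is immediate from the six-term exact sequence in $K$-theory: the segment $K_0(E)\to K_0(B)\overset{\delta}{\to}K_1(A)$ shows that the image of the natural map $K_0(E)\to K_0(B)$ equals $\ker\delta$, so surjectivity of this map is equivalent to $\delta=0$. For (1)$\Rightarrow$(4) I would use the definition of real rank zero directly: lift a projection $\bar p\in B$ to a self-adjoint contraction $a\in E$ with $0\le a\le 1$ and $\pi(a)=\bar p$; since $\bar p$ is idempotent, $a(1-a)$ lies in $A$ and $\pi(a)$ has a spectral gap around $1/2$. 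Using $\rr(E)=0$, perturb $a$ slightly to $a'\in E_\sa$ with $1/2\notin\spec(a')$, keeping the perturbation small enough that $\pi(a')$ retains its gap at $1/2$; then $p:=\chi_{(1/2,\infty)}(a')$ is a genuine projection in $E$ with $\pi(p)=\bar p$. Finally, for (1)$\Rightarrow$(3) I recall that real rank zero passes to matrix algebras, so $\rr(M_n(E))=0$ for all $n$; applying the argument just given to each amplified extension $0\to M_n(A)\to M_n(E)\to M_n(B)\to 0$ lifts every projection in every $M_n(B)$, and since $\rr(B)=0$ forces $K_0(B)$ to be generated by classes of such projections, $K_0(E)\to K_0(B)$ is surjective.

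The heart of the matter is (4)$\Rightarrow$(1), which I would reduce to the statement that every $x\in E_\sa$ invertible modulo $A$ can be perturbed by a small self-adjoint element of $A$ to an invertible element of $E$. Indeed, given $x\in E_\sa$ and $\varepsilon>0$, real rank zero of $B$ lets me approximate $\pi(x)$ by an invertible self-adjoint element and lift the approximation back to $x_1\in E_\sa$ with $\|x-x_1\|$ small and $\pi(x_1)$ invertible. The positive spectral projection $\bar p:=\chi_{(0,\infty)}(\pi(x_1))\in B$ then lifts, by~(4), to a projection $p\in E$, whose role is to prescribe consistently across the ideal the sign that the perturbation must install on the part of $\spec(x_1)$ lying near $0$; this near-zero part is supported in $A$, and $\rr(A)=0$ allows one to push it away from $0$ by a small element of $A$. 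The main obstacle is precisely the alignment between $p$ and $x_1$: the lifted projection agrees with the spectral data of $x_1$ only modulo $A$, so before the perturbation can be carried out one must further adjust $p$ and $x_1$ within $A$ -- again using $\rr(A)=0$ -- so that on the near-zero spectral subspace the symmetry $2p-1$ and the sign of $x_1$ become genuinely compatible. Verifying that the resulting perturbation is both small and yields an invertible element is where the real work lies; without a lift $p$ the two spectral sides cannot be separated consistently and $0$ cannot be removed, which is exactly the obstruction detected by the index map.

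It remains to prove (2)$\Rightarrow$(4), for which I would use the exponential description of the index map. Lifting a projection $\bar p\in B$ to a self-adjoint $a\in E$ with $\pi(a)=\bar p$, the unitary $\exp(2\pi i a)$ lies in $\widetilde A$ and represents $\delta[\bar p]\in K_1(A)$. If $\delta=0$ this class vanishes, and since $\rr(A)=0$ implies that the canonical map $U(\widetilde A)/U_0(\widetilde A)\to K_1(A)$ is injective, $\exp(2\pi i a)$ is connected to $1$ inside $\widetilde A$. Writing this unitary as a limit of exponentials coming from $A$ lets one correct $a$ by a self-adjoint element of $A$ so that its exponential becomes trivial, forcing the spectrum of the corrected lift into $\{0,1\}$ and hence producing a projection lifting $\bar p$; the destabilisation needed to pass from $K$-theoretic triviality to an honest lift is once more supplied by $\rr(A)=0$. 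The matrix generators of $K_0(B)$ required to treat all of $K_0$ are handled by passing to the amplified extensions, exactly as above.
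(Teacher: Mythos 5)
The paper does not reprove the equivalence of (1), (2) and (4): it cites \cite[Proposition~4]{LinRor95ExtLimitCircle} for those and only supplies the argument for (2)$\Leftrightarrow$(3) via the six-term exact sequence --- which is exactly your first observation, so that part matches the paper verbatim. Everything else in your proposal is an attempt to reprove the cited result of Lin--R{\o}rdam from scratch. The roadmap you sketch is indeed the standard one, but the two implications that carry all the weight are only gestured at, not proved.

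Concretely: in (4)$\Rightarrow$(1) you reduce to perturbing an $x_1\in E_\sa$ that is invertible modulo $A$, lift the spectral projection $\bar p=\chi_{(0,\infty)}(\pi(x_1))$, and then state that the ``alignment between $p$ and $x_1$'' and the verification that the perturbation is small and invertible ``is where the real work lies''. But that alignment --- producing a lift $p$ of $\bar p$ sitting inside the hereditary sub-\ca{} generated by $(x_1)_+$ and compatible with $x_1$ up to a controlled error, so that the two spectral halves can be pushed off $0$ separately using $\rr(A)=0$ --- is precisely the content of the proposition being proved (compare the paper's own Lemma~\ref{prp:LiftingApproxUnitProj} and Theorem~\ref{prp:charL0}, which carry out exactly this step in the multiplier-algebra setting); deferring it leaves the implication unestablished. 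Similarly, in (2)$\Rightarrow$(4) the step ``writing this unitary as a limit of exponentials coming from $A$ lets one correct $a$ by a self-adjoint element of $A$ so that its exponential becomes trivial'' does not parse as written, since $\exp(2\pi i(a+h))\neq\exp(2\pi i a)\exp(2\pi i h)$ for noncommuting $a$ and $h$; converting the $K_1$-triviality of $\exp(2\pi i a)$ into a self-adjoint lift with spectrum in $\{0,1\}$ is again the hard step of Lin--R{\o}rdam's argument, and it also relies on the nontrivial theorem of Lin that real rank zero implies $K_1$-injectivity of $U(\widetilde A)/U_0(\widetilde A)\to K_1(A)$, which you invoke without proof. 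A minor further point: in (1)$\Rightarrow$(4), after perturbing $a$ to $a'$ with $1/2\notin\spec(a')$, the projection $\pi(\chi_{(1/2,\infty)}(a'))$ is only norm-close to $\bar p$, not equal to it, so one must still conjugate by a liftable unitary (an exponential close to $1$) to land exactly on $\bar p$.
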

\begin{proof}
The equivalence of~(1), (2) and~(4) is shown in \cite[Proposition~4]{LinRor95ExtLimitCircle}.
The equivalence of~(2) and ~(3) follows from the $K$-theory six-term exact sequence;
see \cite[Corollary~V.1.2.22]{Bla06OpAlgs}.
\end{proof}

\begin{cor}
\label{prp:rrExt-rrzero}
Let $0 \to A \to E \to B \to 0$ be an extension of \ca{s}.
Assume that $\rr(A)=\rr(B)=0$.
Then:
\[
\rr(E)= \begin{cases}
0, & \text{if the index map $K_0(B)\to K_1(A)$ vanishes}; \\
1, & \text{otherwise}.
\end{cases}
\]

Further, if $F \subseteq E$ is a hereditary sub-\ca{}, then $\rr(F) \leq \rr(E)$.
\end{cor}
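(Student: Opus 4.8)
The plan is to treat the two assertions separately, in each case combining the general bounds of \autoref{prp:rrExt} with the characterization in \autoref{prp:rrzeroExt}. For the displayed formula, I would first apply \autoref{prp:rrExt} with $\rr(A)=\rr(B)=0$ to obtain
\[
0 = \max\{\rr(A),\rr(B)\} \leq \rr(E) \leq \rr(A)+\rr(B)+1 = 1,
\]
so that $\rr(E)\in\{0,1\}$. By \autoref{prp:rrzeroExt}, we have $\rr(E)=0$ exactly when the index map $K_0(B)\to K_1(A)$ vanishes. Hence if the index map vanishes then $\rr(E)=0$, while otherwise $\rr(E)\neq 0$ together with $\rr(E)\leq 1$ forces $\rr(E)=1$. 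This is precisely the asserted case distinction.

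For the second assertion, let $F\subseteq E$ be a hereditary sub-\ca{} and let $\pi\colon E\to B$ denote the quotient map. I would argue according to the value of $\rr(E)$. If $\rr(E)=0$, then $E$ has real rank zero, and since real rank zero passes to hereditary subalgebras (Brown-Pedersen, \cite{BroPed91CAlgRR0}), it follows at once that $\rr(F)=0=\rr(E)$.

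If instead $\rr(E)=1$, it suffices to prove $\rr(F)\leq 1$, and for this I would exhibit $F$ as an extension induced by the given one. Set $J:=A\cap F$. As $A$ is an ideal in $E$ and $F$ a subalgebra, $J$ is an ideal in $F$; and since $F$ is hereditary in $E$, a direct verification of the order-theoretic definition shows that $J$ is a hereditary subalgebra of $A$, whence $\rr(J)=0$ by heredity of real rank zero. On the quotient side, the isomorphism theorem identifies $F/J$ with $\pi(F)=(F+A)/A$, which is a \ca{} and hence closed in $B$; moreover $\pi(F)\,B\,\pi(F)=\pi(FEF)\subseteq\pi(F)$ shows that $\pi(F)$ is a hereditary subalgebra of $B$, so that $\rr(F/J)=0$. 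Applying \autoref{prp:rrExt} to the extension $0\to J\to F\to F/J\to 0$ then yields $\rr(F)\leq \rr(J)+\rr(F/J)+1 = 1 = \rr(E)$.

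The first assertion is immediate once \autoref{prp:rrExt} and \autoref{prp:rrzeroExt} are in hand, so the only real content lies in the second. The step I expect to require the most care is confirming that both ends of the induced extension inherit real rank zero; this rests on the two standard permanence properties that real rank zero descends to hereditary subalgebras and that the image of a hereditary subalgebra under a quotient map is again hereditary.
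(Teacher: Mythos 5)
Your proposal is correct and follows essentially the same route as the paper: the displayed formula comes from combining Theorem~\ref{prp:rrExt} with Proposition~\ref{prp:rrzeroExt}, and the hereditary statement is reduced to showing $\rr(F)\leq 1$ via the induced extension $0\to F\cap A\to F\to F/(F\cap A)\to 0$ of real rank zero \ca{s}. You merely spell out the details (heredity of $F\cap A$ in $A$ and of $\pi(F)$ in $B$) that the paper leaves implicit.
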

\begin{proof}
It follows from \autoref{prp:rrExt} that $\rr(E) \leq 1$.
Hence, the computation of $\rr(E)$ follows from \autoref{prp:rrzeroExt}.
Let $F \subseteq E$ be a heredtiary sub-\ca{}.
If $\rr(E)=0$, then $\rr(F)=0$ since real rank zero passes to hereditary sub-\ca{s}; 
see \cite[Corollary~2.8]{BroPed91CAlgRR0}.
Thus, it suffices to show that $\rr(F) \leq 1$.
This follows since $F$ is an extension of the real rank zero \ca{s} $F/(F \cap A)$ and $F \cap A$.
\end{proof}

\begin{rmk}
If a \ca{} has real rank zero, then so does every hereditary sub-\ca{};
see \cite[Corollary~2.8]{BroPed91CAlgRR0}.
In \autoref{prp:rrExt-rrzero} we exhibit another situation where the real rank does not increase when passing to a hereditary sub-\ca{}.

This does not hold in general, however.
For example, a commutative \ca{} $C(X)$ can be viewed as a (full) hereditary sub-\ca{} in $M_n(C(X))$, and by \cite[Corollary~3.2]{BegEva91RRMatrixValued} we have
\[
\rr(C(X)) = \dim(X), \andSep
\rr(M_n(C(X))) = \left\lceil \frac{\dim(X)}{2n-1} \right\rceil.
\]

More drastically, given any \ca{} $A$ with $\rr(A)=\infty$, we can view~$A$ as a (full) hereditary sub-\ca{} in $A \otimes \KK$, and it follows from \cite[Proposition~3.3]{BegEva91RRMatrixValued} and \cite[Corollary~2.8]{BroPed91CAlgRR0} that 
 $\rr(A \otimes \KK)=1$.
\end{rmk}

\begin{exa}
\label{exa:calkin}
Let $Q$ denote the Calkin algebra.
Then the stable multiplier algebra of $Q$ satisfies
\[
\rr( M(Q\otimes\KK) )=1.
\]
Indeed, $M(Q\otimes\KK)$ contains $Q\otimes\KK$ as a closed ideal, and we have $\rr(Q\otimes\KK)=\rr(M(Q\otimes\KK)/Q\otimes\KK)=0$ and $\rr(M(Q\otimes\KK))\neq 0$ by \cite[Example~2.7(iii)]{Zha92RR0CoronaMultiplier1}.
Thus the result follows from \autoref{prp:rrExt-rrzero}.
\end{exa}

\section{The extension real rank} 

In \autoref{prp:rrExt} we have seen lower and upper bounds for the real rank of an extension.
In this section, we develop a method to obtain better upper bounds.

Given a \ca{} $A$, we introduce the \emph{extension real rank} $\xrr(A)$;
see \autoref{dfn:xrr}.
The main feature of this invariant is that it satisfies
\[
\rr(E) \leq \max \big\{ \xrr(A),\rr(B) \big\}
\]
for every extension $0 \to A \to E \to B \to 0$ of \ca{s};
see \autoref{prp:rrExt-xrr-rr}.
We can view this as a sophisticated version of \autoref{prp:rrExtUsingMult} since $\xrr(A) \leq \rr(M(A))$ by \autoref{prp:rr-xrr-rrM}.
The advantage of $\xrr(A)$ over $\rr(M(A))$ stems from the much better permanence properties of the extension real rank, which will be shown in \autoref{sec:Permanence}.

\begin{dfn}
\label{dfn:xrr}
Let $A$ be a \ca{}, let $n\in\NN$, and let $\pi_A\colon M(A)\to M(A)/A$ denote the quotient map.
We say that $A$ has property $(\Lambda_n)$ if for every tuple $(a_0,\ldots,a_n)\in M(A)_\sa^{n+1}$ satisfying $(\pi_A(a_0),\ldots,\pi_A(a_n))\in\Lg_{n+1}(M(A)/A)_\sa$, and every $\varepsilon>0$, there exists $(b_0,\ldots,b_n)\in\Lg_{n+1}(M(A))_\sa$ such that
\[
\|a_0-b_0\|<\varepsilon, \ldots, \|a_n-b_n\|<\varepsilon, \andSep
\pi_A(a_0)=\pi_A(b_0), \ldots, \pi_A(a_n)=\pi_A(b_n).
\]

The \emph{extension real rank} of $A$, denoted by $\xrr(A)$, is the smallest $n\in\NN$ such that~$A$ has $(\Lambda_m)$ for all $m\geq n$.
(As usual, we set $\xrr(A)=\infty$ if there is no $n\in\NN$ such that $A$ has $(\Lambda_m)$ for all $m\geq n$.)
\end{dfn}

\begin{rmk}
\label{rmk:xrr}
The \emph{extension real rank} of a \ca{} $A$ encodes a property of the universal extension $0 \to A \to M(A) \to M(A)/A \to 0$.
We show in \autoref{prp:LnGeneralExt} below that it leads to an analogous property for \emph{every} extension $0 \to A \to E \to B \to 0$.
Nevertheless, we stress that the extension real rank of $A$ is an invariant depending only on $A$, and not a particular extension involving $A$.
\end{rmk}

\begin{prp}
\label{prp:xrrUnital}
Let $A$ be a unital \ca, and let $n \in \NN$.
Then the following are equivalent:
\begin{enumerate}
\item
The set $\Lg_{n+1}(A)_\sa$ is dense in $A^{n+1}_\sa$.
\item
The \ca{} $A$ has $(\Lambda_n)$.
\item
The \ca{} $A$ has $(\Lambda_m)$ for all $m\geq n$.
\end{enumerate}
In particular, we have $\xrr(A)=\rr(A)$ whenever $A$ is unital.
\end{prp}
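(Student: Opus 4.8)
The plan is to exploit the fact that a unital \ca{} is its own multiplier algebra, which collapses the quotient appearing in the definition of $(\Lambda_n)$. First I would observe that when $A$ is unital we have $M(A)=A$, and since $A$ is an ideal in $M(A)=A$, the quotient $M(A)/A$ is the zero algebra; the quotient map $\pi_A$ sends every element to $0$. In the zero algebra the sum $\sum_k a_k^2$ equals $0$, which is invertible there (the unit and the zero coincide), so $\Lg_{n+1}(M(A)/A)_\sa$ consists of the single tuple $(0,\ldots,0)$, and this is all of $(M(A)/A)_\sa^{n+1}$. Consequently, for unital $A$ both the hypothesis $(\pi_A(a_0),\ldots,\pi_A(a_n))\in\Lg_{n+1}(M(A)/A)_\sa$ and the constraints $\pi_A(a_j)=\pi_A(b_j)$ in the definition of $(\Lambda_n)$ are automatically satisfied for every tuple. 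Spelling out what remains, property $(\Lambda_n)$ for a unital \ca{} says precisely that every tuple in $A_\sa^{n+1}$ can be approximated arbitrarily well by tuples in $\Lg_{n+1}(A)_\sa$, that is, that $\Lg_{n+1}(A)_\sa$ is dense in $A_\sa^{n+1}$.

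With this reduction in hand, the equivalence of~(1) and~(2) is immediate, and (3)$\Rightarrow$(2) is trivial by taking $m=n$. For (2)$\Rightarrow$(3), I would invoke \autoref{rmk:Lg-dense}: if $\Lg_{n+1}(A)_\sa$ is dense in $A_\sa^{n+1}$, then by induction $\Lg_{m+1}(A)_\sa$ is dense in $A_\sa^{m+1}$ for all $m\geq n$, which by the reduction above means that $A$ has $(\Lambda_m)$ for all $m\geq n$.

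Finally, for the ``in particular'' statement I would combine the reduction with the definitions. For unital $A$, the \ca{} $A$ has $(\Lambda_m)$ if and only if $\Lg_{m+1}(A)_\sa$ is dense in $A_\sa^{m+1}$, which by \autoref{dfn:rr} together with \autoref{rmk:Lg-dense} holds if and only if $\rr(A)\leq m$. Thus $\xrr(A)$, being the smallest $n$ for which $A$ has $(\Lambda_m)$ for all $m\geq n$, equals the smallest $n$ with $n\geq\rr(A)$, namely $\rr(A)$; both sides equal $\infty$ in case $\Lg_{m+1}(A)_\sa$ fails to be dense for every $m$.

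The computations are entirely routine; the only point requiring care is the first step, where one must correctly interpret the unimodularity condition $\Lg_{n+1}$ over the zero algebra $M(A)/A$ and confirm that it, together with the norm-preserving constraint on $\pi_A$, imposes no restriction when $A$ is unital.
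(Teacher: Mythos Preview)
Your proposal is correct and follows essentially the same approach as the paper: the paper simply asserts that the equivalence of~(1) and~(2) ``follows directly from the definition of $(\Lambda_n)$,'' while you spell out explicitly why this is so (namely, that $M(A)=A$ forces $M(A)/A=0$, collapsing all the constraints), and both arguments then invoke \autoref{rmk:Lg-dense} to pass from~(1) to~(3). Your treatment of the ``in particular'' clause is also the natural unpacking of what the paper leaves implicit.
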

\begin{proof}
The equivalence of~(1) and~(2) follows directly from the definition of $(\Lambda_n)$.
Further, by the definition of the real rank, (1) means precisely that $\rr(A) \leq n$.
As noted in \autoref{rmk:Lg-dense}, if $\Lg_{n+1}(A)_\sa$ is dense in~$A^{n+1}_\sa$, then $\Lg_{m+1}(A)_\sa$ is dense in~$A^{m+1}_\sa$ for every $m \geq n$, which shows that~(1) and~(3) are equivalent.
\end{proof}

As we will see in \autoref{exa:StableCalkin}, there are nonunital \ca{s} for which the extension real rank and the real rank do not agree.
In general the real rank is dominated by the extension real rank;
see \autoref{prp:rrLeqLn}.

\begin{rmk}
\label{rmk:xrr-Ln}
For nonunital \ca{s}, it is unclear if $(\Lambda_n)$ implies $(\Lambda_{n+1})$.
\end{rmk}

\begin{lma}
\label{prp:LnGeneralExt}
Let $0 \to A \to E \xrightarrow{\pi} B\to 0$ be an extension of \ca{s} with~$E$ unital, let $n\in\NN$ be such that $A$ has~$(\Lambda_n)$, let $(a_0,\ldots,a_n)\in E_\sa^{n+1}$ be such that the tuple $(\pi(a_0),\ldots,\pi(a_n))$ belongs to $\Lg_{n+1}(B)_\sa$, and let $\varepsilon>0$.
Then there exists $(b_0,\ldots,b_n)\in\Lg_{n+1}(E)_\sa$ satisfying
\[
\| a_j - b_j \| < \varepsilon, \andSep
\pi(a_j) = \pi(b_j) \quad \text{for $j=0,\ldots,n$}.
\]
\end{lma}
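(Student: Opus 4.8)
The plan is to exploit the Busby realization of the extension as a pullback and then transport property $(\Lambda_n)$ from $M(A)$ to $E$ along it. Concretely, as in the proof of \autoref{prp:rrExtUsingMult}, I would use that $E$ is isomorphic to the pullback of the quotient map $\pi_A \colon M(A) \to M(A)/A$ along the Busby map $\tau \colon B \to M(A)/A$, via $e \mapsto (\mu(e), \pi(e))$, where $\mu \colon E \to M(A)$ is the canonical morphism extending $A \hookrightarrow M(A)$. Since $E$ is unital, $\mu(1_E)$ is the multiplier acting as the identity on $A$, so $\mu(1_E) = 1_{M(A)}$; thus $\mu$ is unital and hence so is $\tau$. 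As $B$ is unital (being a quotient of the unital algebra $E$) and $\tau$ is a unital morphism, $\tau$ preserves invertibility, so it maps $\Lg_{n+1}(B)_\sa$ into $\Lg_{n+1}(M(A)/A)_\sa$.

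First I would pass to $M(A)$. Applying $\mu$ to the given tuple yields $(\mu(a_0),\ldots,\mu(a_n)) \in M(A)_\sa^{n+1}$, and since $\pi_A(\mu(a_j)) = \tau(\pi(a_j))$ and $(\pi(a_0),\ldots,\pi(a_n)) \in \Lg_{n+1}(B)_\sa$, the preceding paragraph gives $(\pi_A(\mu(a_0)),\ldots,\pi_A(\mu(a_n))) \in \Lg_{n+1}(M(A)/A)_\sa$. This is exactly the hypothesis under which property $(\Lambda_n)$ of $A$ applies, so I obtain $(c_0,\ldots,c_n) \in \Lg_{n+1}(M(A))_\sa$ with $\|\mu(a_j)-c_j\|<\varepsilon$ and $\pi_A(c_j)=\pi_A(\mu(a_j))$ for all $j$.

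Next I would glue back into $E$. Because $\pi_A(c_j) = \pi_A(\mu(a_j)) = \tau(\pi(a_j))$, the pair $(c_j, \pi(a_j))$ lies in the pullback, i.e.\ defines an element $b_j \in E_\sa$ with $\mu(b_j)=c_j$ and $\pi(b_j)=\pi(a_j)$; equivalently, $b_j = a_j - (\mu(a_j)-c_j)$, where $\mu(a_j)-c_j$ lies in $A$ (being killed by $\pi_A$) and has norm less than $\varepsilon$. This gives at once $\pi(a_j)=\pi(b_j)$ and $\|a_j-b_j\|<\varepsilon$. It remains to check $(b_0,\ldots,b_n) \in \Lg_{n+1}(E)_\sa$, i.e.\ that $\sum_j b_j^2$ is invertible in $E$. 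Under the pullback identification, $\sum_j b_j^2$ corresponds to the pair $(\sum_j c_j^2, \sum_j \pi(a_j)^2)$, whose first component is invertible in $M(A)$ (as $(c_0,\ldots,c_n) \in \Lg_{n+1}(M(A))_\sa$) and whose second is invertible in $B$ (as $(\pi(a_0),\ldots,\pi(a_n)) \in \Lg_{n+1}(B)_\sa$). Since an element of a pullback of \ca{s} is invertible as soon as both of its components are invertible, $\sum_j b_j^2$ is invertible in $E$, completing the argument.

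The technical core, and the only place requiring care, is the gluing step: the clause $\pi_A(a_j)=\pi_A(b_j)$ in the definition of $(\Lambda_n)$ is precisely what guarantees that the perturbed multipliers $c_j$ remain compatible with the unchanged quotient data $\pi(a_j)$, so that they assemble into genuine elements of $E$; without it the two components would fail to match over $M(A)/A$. The invertibility of $\sum_j b_j^2$ then reduces to the elementary componentwise invertibility criterion for pullbacks, the quotient component being controlled by the hypothesis on $(\pi(a_0),\ldots,\pi(a_n))$ and the multiplier component by the output of $(\Lambda_n)$.
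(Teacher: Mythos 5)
Your proposal is correct and follows essentially the same route as the paper: realize $E$ as the Busby pullback of $M(A)$ and $B$ over $M(A)/A$, apply $(\Lambda_n)$ to the images of the $a_j$ in $M(A)$, and glue the perturbed multipliers back together with the unchanged quotient data. The only (cosmetic) difference is the final unimodularity check, where you invoke componentwise invertibility of $\sum_j b_j^2$ in the pullback, while the paper argues that $b_0,\ldots,b_n$ generate $E$ as a left ideal because they do so modulo $A$ and their multiplier components generate $M(A)$; both verifications are equally valid.
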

\begin{proof}
Let $\pi_A\colon M(A)\to M(A)/A$ denote the quotient map.
From Busby's theory we obtain unital morphisms $\sigma\colon E\to M(A)$ and $\tau\colon B\to M(A)/A$ such that $E$ is the pullback of $B$ and $M(A)$ along $\tau$ and $\pi_A$;
see for example \cite[II.8.4.4]{Bla06OpAlgs}.
This means that the map $E \to M(A) \oplus B$, $a \mapsto (\sigma(a),\pi(a))$, is an injective morphism that identifies $E$ with the sub-\ca{} $F$ of $M(A) \oplus B$ given by
\[
F := \big\{ (x,y) \in M(A) \oplus B : \pi_A(x) = \tau(y) \big\}.
\]

The situation is shown in the following commutative diagram.
\[
\xymatrix@R-5pt{
0 \ar[r] & A \ar[r] \ar[d]^{\id_A} & E \ar[r]^{\pi} \ar[d]^{\sigma} & B \ar[r] \ar[d]^{\tau} & 0 \\
0 \ar[r] & A \ar[r] & M(A) \ar[r]^-{\pi_A} & M(A)/A \ar[r] & 0.
}
\]

Consider $(\sigma(a_0),\ldots,\sigma(a_n)) \in M(A)^{n+1}_\sa$.
The tuple $(\pi_A(\sigma(a_0)),\ldots,\pi_A(\sigma(a_n)))$ is unimodular.
Using that $A$ has $(\Lambda_n)$, we obtain $(b_0',\ldots,b_n') \in\Lg_{n+1}(M(A))_\sa$ satisfying
\[
\| \sigma(a_j) - b_j' \|<\varepsilon, \andSep
\pi_A( b_j' ) = \pi_A(\sigma(a_j)) = \tau(\pi(a_j)) \quad \text{for $j=0,\ldots,n$}.
\]

For each $j=0,\ldots,n$, the pair $(b_j',\pi(a_j))$ belongs to $F$, and we let $b_j$ be the corresponding element in the pullback, that is, $b_j \in E$ is the unique element satisfying $\sigma(b_j) = b_j'$ and $\pi(b_j) = \pi(a_j)$.
We show that $(b_0,\ldots,b_n)$ has the claimed properties.
First, we have
\[
\| a_j - b_j \|
= \max\big\{ \| \pi(a_j - b_j) \|, \| \sigma(a_j - b_j) \| \big\}
= \max\big\{ 0, \| \sigma(a_j) - b_j' \| \big\}
< \varepsilon.
\]

To show that $(b_0,\ldots,b_n)$ is unimodular, let $L$ denote the left ideal of $A$ generated by $b_0,\ldots,b_n$.
Using that $(\pi(a_0),\ldots,\pi(a_n))$ is unimodular in $B$, we deduce that $\pi(L)=B$.
Further, since $(b_0',\ldots,b_n')$ is unimodular in $M(A)$, we obtain $x_0,\ldots,x_n \in M(A)$ such that $x_0b_0'+\ldots+x_nb_n' = 1_{M(A)}$.
Given $a \in A$, consider the elements $ax_0,\ldots,ax_n \in A \subseteq E$.
Then
\[
\sigma\big( (ax_0)b_0 + \ldots + (ax_n)b_n \big)
= (ax_0)b_0' + \ldots + (ax_n)b_n'
= a \in A \subseteq M(A).
\]
Since $\sigma$ identifies the subalgebra $A$ of $E$ with the subalgebra $A$ of $M(A)$, it follows that $a = (ax_0)b_0 + \ldots + (ax_n)b_n \in L$.
Thus, we have $A \subseteq L$, and together $L = E$, as desired.
\end{proof}

\begin{prp}
\label{prp:rrExt-Ln-rr}
Let $0 \to A \to E \to B\to 0$ be an extension of \ca{s}, and let $n\in\NN$.
Assume that $A$ has $(\Lambda_n)$ and that $\rr(B)\leq n$.
Then $\rr(E)\leq n$.
\end{prp}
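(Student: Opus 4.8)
The plan is to reduce to the case where $E$ is unital and then feed a suitably perturbed tuple into \autoref{prp:LnGeneralExt}, which already contains the genuine mathematical content.

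First I would handle the reduction to unital $E$, exactly as in the proof of \autoref{prp:rrExt}. If $E$ is nonunital, I regard $A$ as an ideal of $\widetilde{E}$, so that $\widetilde{E}/A$ is the forced unitization of $B$ and hence $\rr(\widetilde{E}/A) = \rr(B) \leq n$. Since property $(\Lambda_n)$ refers only to the universal extension $0 \to A \to M(A) \to M(A)/A \to 0$ and is thus an invariant of $A$ alone, $A$ still has $(\Lambda_n)$ when viewed as an ideal of $\widetilde{E}$. As $\rr(E) = \rr(\widetilde{E})$, it suffices to treat the unital extension $0 \to A \to \widetilde{E} \to \widetilde{E}/A \to 0$, and so from now on I may assume that $E$ is unital.

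Next, to show $\rr(E) \leq n$, I would verify that $\Lg_{n+1}(E)_\sa$ is dense in $E_\sa^{n+1}$. Given $(a_0,\ldots,a_n) \in E_\sa^{n+1}$ and $\varepsilon > 0$, let $\pi \colon E \to B$ denote the quotient map. Using $\rr(B) \leq n$, I first approximate the image tuple, choosing $(c_0,\ldots,c_n) \in \Lg_{n+1}(B)_\sa$ with $\|\pi(a_j) - c_j\| < \varepsilon/2$ for each $j$. Since a self-adjoint element of a quotient lifts to a self-adjoint element of the same norm, I lift each $c_j - \pi(a_j)$ to a self-adjoint $d_j \in E_\sa$ with $\|d_j\| < \varepsilon/2$, and set $a_j' := a_j + d_j$. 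Then $a_j' \in E_\sa$, $\|a_j - a_j'\| < \varepsilon/2$, and $\pi(a_j') = c_j$, so that $(\pi(a_0'),\ldots,\pi(a_n'))$ belongs to $\Lg_{n+1}(B)_\sa$. Applying \autoref{prp:LnGeneralExt} to the tuple $(a_0',\ldots,a_n')$ with tolerance $\varepsilon/2$ -- using that $E$ is unital, that $A$ has $(\Lambda_n)$, and that the image tuple is unimodular -- yields $(b_0,\ldots,b_n) \in \Lg_{n+1}(E)_\sa$ with $\|a_j' - b_j\| < \varepsilon/2$ for each $j$. The triangle inequality then gives $\|a_j - b_j\| < \varepsilon$, exhibiting an element of $\Lg_{n+1}(E)_\sa$ within $\varepsilon$ of the given tuple, as required.

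Since the substantive step -- perturbing a tuple in the unital algebra to a unimodular one while fixing its image -- has already been carried out in \autoref{prp:LnGeneralExt}, this proof is essentially bookkeeping. The only point demanding minor care is the interplay of the two approximations: I must first make the quotient tuple unimodular and lift that correction back to a \emph{small} self-adjoint perturbation of the original tuple, and only then invoke \autoref{prp:LnGeneralExt}, tracking the $\varepsilon/2 + \varepsilon/2$ budget so that the final estimate comes out below $\varepsilon$.
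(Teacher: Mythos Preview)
Your proof is correct and follows essentially the same approach as the paper: reduce to unital $E$ via the forced unitization, use $\rr(B)\leq n$ to perturb the image tuple to a unimodular one, lift this small self-adjoint perturbation back to $E$, and then invoke \autoref{prp:LnGeneralExt} with the remaining $\varepsilon/2$ budget. The only cosmetic difference is that you lift the difference $c_j-\pi(a_j)$ directly (controlling its norm), whereas the paper phrases the lift as choosing $b_j\in E_\sa$ with $\pi(b_j)=b_j'$ and $\|a_j-b_j\|<\varepsilon/2$; these are the same maneuver.
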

\begin{proof}
If $E$ is nonunital, we consider $A$ as an ideal of $\widetilde{E}$.
Then $\widetilde{E}/I$ is isomorphic to the forced unitization of $B$, whence $\rr(\widetilde{E}/A)=\rr(B)\leq n$.
It then suffices to show that $\rr(\widetilde{E})\leq n$, since this implies that $\rr(E)\leq n$.

Thus, without loss of generality, we may assume that $E$ is unital.
Let $\pi\colon E\to B$ denote the quotient map.
To verify $\rr(E)\leq n$, let $(a_0,\ldots,a_n)\in E_\sa^{n+1}$ and $\varepsilon>0$. 
We need to find $(c_0,\ldots,c_n)\in\Lg_{n+1}(E)_\sa$ such that $\| a_j - c_j \| < \varepsilon$ for $j=0,\ldots,n$.
Consider the tuple $(\pi(a_0),\ldots,\pi(a_n)) \in B^{n+1}_\sa$.
Using that $\rr(B)\leq n$, we obtain $(b_0',\ldots,b_n') \in\Lg_{n+1}(B)_\sa$ with $\| \pi(a_j) - b_j' \|< \varepsilon/2$ for $j=0,\ldots,n$.
As in the proof of \autoref{prp:rrExt}, we can lift the $b_j'$ to obtain $(b_0,\ldots,b_n) \in E^{n+1}_\sa$ satisfying
\[
\| a_j - b_j \| < \varepsilon/2, \andSep
\pi(b_j) = b_j' \quad \text{for $j=0,\ldots,n$}.
\]
Applying \autoref{prp:LnGeneralExt}, we obtain $(c_0,\ldots,c_n) \in\Lg_{n+1}(E)_\sa$ satisfying
\[
\| b_j - c_j \| < \varepsilon/2, \andSep
\pi(b_j) = \pi(c_j) \quad \text{for $j=0,\ldots,n$}.
\]
Then $(c_0,\ldots,c_n)$ has the desired properties.
\end{proof}

The following result provides the main application of the extension real rank.

\begin{thm}
\label{prp:rrExt-xrr-rr}
Let $0 \to A \to E \to B\to 0$ be an extension of \ca{s}.
Then
\[
\max \big\{ \rr(A), \rr(B) \big\}
\leq \rr(E)
\leq \max \big\{ \xrr(A), \rr(B) \big\}.
\]
Thus, if $\rr(A)=\xrr(A)$, then $\rr(E)=\max\{\rr(A),\rr(B)\}$.
\end{thm}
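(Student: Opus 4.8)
The plan is to prove the two displayed inequalities separately, and then read off the final clause. The lower bound $\max\{\rr(A),\rr(B)\} \leq \rr(E)$ requires no new work: by \cite[Th\'{e}or\`{e}me~1.4]{Elh95RRExt} the real rank does not increase when passing to ideals or quotients, so that $\rr(A) \leq \rr(E)$ and $\rr(B) = \rr(E/A) \leq \rr(E)$, exactly as in the proof of \autoref{prp:rrExt}.

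For the upper bound I would first dispose of the trivial case: if $\max\{\xrr(A),\rr(B)\} = \infty$ there is nothing to prove, so I may assume it equals some finite $n \in \NN$. The key point is then to feed the correct value of $n$ into \autoref{prp:rrExt-Ln-rr}. Since $n \geq \xrr(A)$, the very definition of the extension real rank guarantees that $A$ has property $(\Lambda_n)$; this is where it matters that $\xrr(A)$ is defined as the least integer for which $(\Lambda_m)$ holds for \emph{all} $m \geq \xrr(A)$, rather than merely $(\Lambda_n)$ for a single $n$ (compare \autoref{rmk:xrr-Ln}). Since also $\rr(B) \leq n$, the hypotheses of \autoref{prp:rrExt-Ln-rr} are met, and it yields $\rr(E) \leq n = \max\{\xrr(A),\rr(B)\}$.

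The concluding statement then follows formally: if $\rr(A) = \xrr(A)$, then $\max\{\xrr(A),\rr(B)\} = \max\{\rr(A),\rr(B)\}$, so the lower and upper bounds coincide and force $\rr(E) = \max\{\rr(A),\rr(B)\}$.

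I expect no genuine obstacle, since the substantive work has already been carried out in \autoref{prp:LnGeneralExt} and \autoref{prp:rrExt-Ln-rr}. The only subtlety worth flagging is the bookkeeping around the ``for all $m \geq n$'' clause in \autoref{dfn:xrr}: one must invoke $(\Lambda_n)$ for precisely the index $n = \max\{\xrr(A),\rr(B)\}$, and this is legitimate exactly because $n \geq \xrr(A)$.
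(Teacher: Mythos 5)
Your proposal is correct and follows essentially the same route as the paper: the lower bound via \cite[Th\'{e}or\`{e}me~1.4]{Elh95RRExt}, and the upper bound by setting $m=\max\{\xrr(A),\rr(B)\}$, noting that $A$ has $(\Lambda_m)$ by the definition of $\xrr$, and invoking \autoref{prp:rrExt-Ln-rr}. Your remark about the ``for all $m\geq n$'' clause in \autoref{dfn:xrr} is exactly the point the paper's proof uses implicitly.
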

\begin{proof}
The left inequality follows since the real rank does not increase when passing to ideals or quotients;
see \cite[Th\'eorem\`e~1.4]{Elh95RRExt}.

To show the right inequality, set $m:=\max\{\xrr(A),\rr(B)\}$, which we may assume to be finite.
Then $\xrr(A)\leq m$, which by definition implies that $A$ has~$(\Lambda_m)$.
Applying \autoref{prp:rrExt-Ln-rr}, we obtain $\rr(E)\leq m$.
\end{proof}

\begin{cor}
Let $A$ be a \ca{} satisfying $\xrr(A)=0$. 
Then the real rank is unchanged when taking extensions with $A$, that is, if $0 \to A \to E \to B\to 0$ is an extension, then $\rr(E)=\rr(B)$.
\end{cor}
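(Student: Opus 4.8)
The plan is to deduce this directly from \autoref{prp:rrExt-xrr-rr}, which already packages all the work. That theorem provides, for any extension $0 \to A \to E \to B \to 0$, the two-sided bound
\[
\max \big\{ \rr(A), \rr(B) \big\}
\leq \rr(E)
\leq \max \big\{ \xrr(A), \rr(B) \big\}.
\]
First I would substitute the hypothesis $\xrr(A) = 0$ into the right-hand bound. This collapses the upper estimate to $\max\{0, \rr(B)\} = \rr(B)$, so that $\rr(E) \leq \rr(B)$.

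For the reverse inequality, I would use the left-hand bound: since $\max\{\rr(A), \rr(B)\} \geq \rr(B)$ trivially, we obtain $\rr(B) \leq \rr(E)$. (This is just the standard fact, recorded in the theorem, that the real rank does not increase when passing to the quotient $B = E/A$.) Combining the two directions yields $\rr(E) = \rr(B)$, which is the claim.

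I do not anticipate any genuine obstacle here: the corollary is an immediate specialization of \autoref{prp:rrExt-xrr-rr} to the case $\xrr(A) = 0$, with all the substantive content residing in that theorem (and ultimately in \autoref{prp:LnGeneralExt}). The only point worth flagging is the degenerate edge case where $\rr(B) = \infty$; there the chain of inequalities still forces $\rr(E) = \infty = \rr(B)$, so the statement holds without modification. One could alternatively invoke the final sentence of \autoref{prp:rrExt-xrr-rr}, noting that $\rr(A) \leq \xrr(A) = 0$ forces $\rr(A) = \xrr(A) = 0$ and hence $\rr(E) = \max\{\rr(A), \rr(B)\} = \rr(B)$; but the direct two-inequality argument above is the shortest route.
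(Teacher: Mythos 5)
Your proposal is correct and matches the paper's intent exactly: the corollary is stated in the paper without proof, as an immediate specialization of \autoref{prp:rrExt-xrr-rr}, and your two-inequality argument (upper bound collapses to $\rr(B)$ when $\xrr(A)=0$, lower bound gives $\rr(B)\leq\rr(E)$) is precisely that specialization. The edge case $\rr(B)=\infty$ and the alternative via $\rr(A)\leq\xrr(A)=0$ are both handled correctly.
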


Next, we study the relationship between $(\Lambda_n)$, the real rank, and the extension real rank of a \ca.

\begin{lma}
\label{prp:rrLeqLn}
Let $A$ be a \ca.
Given $n \in \NN$, if $A$ has $(\Lambda_n)$, then $\rr(A)\leq n$.
In particular, $\rr(A)\leq\xrr(A)$.
\end{lma}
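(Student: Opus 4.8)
The goal is to show that property $(\Lambda_n)$ for a \ca{} $A$ implies $\rr(A) \leq n$, and then deduce $\rr(A) \leq \xrr(A)$ as an immediate consequence.

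The plan is to apply property $(\Lambda_n)$ to the specific extension coming from the minimal unitization of $A$. Recall that by \autoref{dfn:rr}, we have $\rr(A) = \rr(\widetilde{A})$, and we must show that $\Lg_{n+1}(\widetilde{A})_\sa$ is dense in $(\widetilde{A})^{n+1}_\sa$. To connect this to $(\Lambda_n)$, which is a statement about the universal extension $0 \to A \to M(A) \to M(A)/A \to 0$, I would exploit the fact that $\widetilde{A}$ sits naturally inside $M(A)$ as a unital \ca{} containing $A$ as an ideal. Concretely, there is a canonical unital embedding $\widetilde{A} \hookrightarrow M(A)$, and under the quotient map $\pi_A \colon M(A) \to M(A)/A$ the image of $\widetilde{A}$ is the scalars $\CC \cdot 1 \subseteq M(A)/A$.

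The key step is as follows. Let $(a_0,\ldots,a_n) \in (\widetilde{A})^{n+1}_\sa$ and $\varepsilon > 0$ be given. Viewing each $a_j$ in $M(A)_\sa$, the tuple $(\pi_A(a_0),\ldots,\pi_A(a_n))$ consists of scalar multiples of $1_{M(A)/A}$, say $\pi_A(a_j) = \lambda_j 1$ with $\lambda_j \in \RR$. I would first perturb the scalars so that $\sum_j \lambda_j^2 \neq 0$: this can be arranged by an arbitrarily small change in each $a_j$ (replacing $a_j$ by $a_j + \delta_j 1_{\widetilde{A}}$ for suitable small real $\delta_j$), at which point $(\pi_A(a_0),\ldots,\pi_A(a_n))$ becomes unimodular in $M(A)/A$ because $\sum_j (\lambda_j 1)^2 = (\sum_j \lambda_j^2) 1$ is invertible. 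Now $(\Lambda_n)$ applies and yields a tuple $(b_0,\ldots,b_n) \in \Lg_{n+1}(M(A))_\sa$ with $\|a_j - b_j\| < \varepsilon$ and $\pi_A(b_j) = \pi_A(a_j)$ for all $j$. The crucial observation is that $\pi_A(b_j) = \lambda_j 1$ forces $b_j \in \widetilde{A}$: indeed $b_j - \lambda_j 1_{\widetilde{A}}$ lies in the kernel of $\pi_A$ restricted to the relevant subalgebra, hence in $A \subseteq \widetilde{A}$. Since $\sum_j b_j^2$ is invertible in $M(A)$ and lies in the unital subalgebra $\widetilde{A}$, it is invertible there too, so $(b_0,\ldots,b_n) \in \Lg_{n+1}(\widetilde{A})_\sa$. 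This establishes density and hence $\rr(\widetilde{A}) = \rr(A) \leq n$.

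The main obstacle I anticipate is the bookkeeping needed to verify that $\pi_A(b_j) = \pi_A(a_j)$ genuinely confines $b_j$ to $\widetilde{A}$ rather than merely to $M(A)$; this hinges on identifying $\widetilde{A}$ as exactly the preimage $\pi_A^{-1}(\CC \cdot 1)$ intersected with the scalar directions, or more cleanly on the fact that $A = \ker(\pi_A)$ so that any two elements of $M(A)$ with the same image under $\pi_A$ differ by an element of $A$. The initial scalar perturbation to achieve unimodularity of the quotient tuple is a minor technical point but must be handled to legitimately invoke $(\Lambda_n)$. Once $\rr(A) \leq n$ is known whenever $A$ has $(\Lambda_n)$, the final assertion $\rr(A) \leq \xrr(A)$ follows directly: by \autoref{dfn:xrr}, $A$ has $(\Lambda_m)$ for all $m \geq \xrr(A)$, in particular for $m = \xrr(A)$, whence $\rr(A) \leq \xrr(A)$.
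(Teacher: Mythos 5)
Your argument is correct and rests on the same idea as the paper's: apply $(\Lambda_n)$ to the unitization extension $0 \to A \to \widetilde{A} \to \CC \to 0$, using $\rr(\CC)=0$ to first perturb the quotient tuple to a unimodular one. The paper simply cites \autoref{prp:rrExt-Ln-rr} (whose proof, via \autoref{prp:LnGeneralExt}, contains exactly the bookkeeping you carry out by hand inside $M(A)$, including the observation that $\pi_A(b_j)=\pi_A(a_j)$ forces $b_j \in A + \CC 1 = \widetilde{A}$ and that invertibility passes to the unital sub-\ca{} $\widetilde{A}$), so your proof is a correct, self-contained unwinding of that citation.
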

\begin{proof}
The statement is clear if $A$ is unital;
see \autoref{prp:xrrUnital}.
If $A$ is nonunital and has $(\Lambda_n)$, then we consider the extension $0 \to A \to \widetilde{A} \to \CC\to 0$.
Using that $\rr(\CC)=0\leq n$, it follows from \autoref{prp:rrExt-Ln-rr} that $\rr(\widetilde{A})\leq n$.
\end{proof}

\begin{rmk}
\label{rmk:connectionLnExt}
Let $A$ be a \ca, and let $n\in\NN$.
Consider the following statements:
\begin{enumerate}
\item
The \ca{} $A$ has $(\Lambda_n)$.
\item
Given an extension $0 \to A \to E \to B\to 0$ of \ca{s} such that $\rr(B)\leq n$, we have $\rr(E)\leq n$.
\end{enumerate}
By \autoref{prp:rrExt-Ln-rr}, (1) implies~(2).
Does the converse hold?
A natural approach would be to find for a given tuple in $M(A)^{n+1}_\sa$ with unimodular image image in $M(A)/A$ a sub-\ca{} $D\subseteq M(A)/A$ containing it and such that $\rr(D)\leq n$.
For $n=0$ this can be accomplished, which is the idea to prove \autoref{prp:charL0} below.
For $n\geq 1$ it remains unclear.
\end{rmk}

Either of the inequalities in the next result can be strict;
see Examples~\ref{exa:StableCalkin} and~\ref{exa:xrr-rrMult}.

\begin{prp}
\label{prp:rr-xrr-rrM}
Let $A$ be a \ca{}.
Then
\[
\rr(A) \leq \xrr(A) \leq \rr(M(A)).
\]
\end{prp}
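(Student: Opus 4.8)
The statement to prove is
\[
\rr(A) \leq \xrr(A) \leq \rr(M(A)).
\]
The first inequality $\rr(A) \leq \xrr(A)$ is already established as \autoref{prp:rrLeqLn}, so the plan is to focus entirely on the second inequality $\xrr(A) \leq \rr(M(A))$. By the definition of $\xrr(A)$ as the smallest $n$ such that $A$ has $(\Lambda_m)$ for all $m \geq n$, it suffices to show that if $n := \rr(M(A))$ is finite, then $A$ has $(\Lambda_m)$ for every $m \geq n$.

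First I would unwind what $(\Lambda_m)$ demands: given $(a_0,\ldots,a_m) \in M(A)_\sa^{m+1}$ whose image under the quotient map $\pi_A \colon M(A) \to M(A)/A$ is unimodular, and given $\varepsilon > 0$, I must produce $(b_0,\ldots,b_m) \in \Lg_{m+1}(M(A))_\sa$ with $\|a_j - b_j\| < \varepsilon$ and, crucially, $\pi_A(b_j) = \pi_A(a_j)$ for each $j$. The key observation is that $\rr(M(A)) \leq n \leq m$ already guarantees, by \autoref{rmk:Lg-dense} and \autoref{dfn:rr}, that $\Lg_{m+1}(M(A))_\sa$ is dense in $M(A)_\sa^{m+1}$. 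So a naive approximation gives a unimodular tuple close to $(a_0,\ldots,a_m)$, but it will generically fail the exactness condition $\pi_A(b_j) = \pi_A(a_j)$.

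The main obstacle, then, is reconciling the approximation with the requirement that the perturbation vanish modulo $A$. The plan is to arrange the perturbation to lie inside $A$ itself. Concretely, I would work in $M(A)$ and seek $b_j = a_j + k_j$ with each $k_j \in A_\sa$ and $\|k_j\| < \varepsilon$; such a perturbation automatically satisfies $\pi_A(b_j) = \pi_A(a_j)$. The requirement that $(b_0,\ldots,b_m)$ be unimodular in $M(A)$, i.e.\ $\sum_j b_j^2$ invertible, must then be achieved using only perturbations drawn from the ideal $A$. Since the image tuple $(\pi_A(a_0),\ldots,\pi_A(a_m))$ is already unimodular in $M(A)/A$, the element $c := \sum_j a_j^2 \in M(A)_\sa$ has invertible image $\pi_A(c)$, so $c$ is invertible modulo $A$; what is missing is honest invertibility in $M(A)$, and the defect is controlled by $A$. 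The natural move is to invoke the density $\rr(M(A)) \leq m$ to perturb $(a_0,\ldots,a_m)$ to a unimodular tuple, and then transport the correction into the ideal. I expect this gluing step — converting an arbitrary small unimodular perturbation into one supported in $A$ while preserving unimodularity and the norm estimate — to be the crux, and it should be handled by a standard functional-calculus or convexity argument that interpolates between $c$ (invertible mod $A$) and a nearby invertible element, absorbing the difference into $A_\sa$. Once each $k_j \in A_\sa$ is produced with $\|k_j\| < \varepsilon$ and $\sum_j (a_j + k_j)^2$ invertible in $M(A)$, the tuple $(b_0,\ldots,b_m)$ witnesses $(\Lambda_m)$, and since $m \geq n = \rr(M(A))$ was arbitrary, we conclude $\xrr(A) \leq \rr(M(A))$.
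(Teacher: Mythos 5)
Your reduction is correct and matches the paper's: the first inequality is exactly \autoref{prp:rrLeqLn}, and for the second it suffices to show that $n:=\rr(M(A))$ finite implies $(\Lambda_m)$ for all $m\geq n$. You also correctly identify that the exactness condition $\pi_A(b_j)=\pi_A(a_j)$ forces the perturbation to be of the form $b_j=a_j+k_j$ with $k_j\in A_\sa$, so the whole content is to produce corrections $k_j\in A_\sa$ with $\|k_j\|<\varepsilon$ and $\sum_j b_j^2$ invertible in $M(A)$. But that step, which you defer to ``a standard functional-calculus or convexity argument,'' is the entire substance of the proof, and as hinted it would not go through: interpolating between $c=\sum_j a_j^2$ (invertible mod $A$) and a nearby invertible element operates on the single element $c$, whereas the correction must be realized coordinatewise on the tuple, and invertibility of a sum of squares is not preserved under convex combination of tuples --- the cross terms $\sum_j(a_jb_j+b_ja_j)$ can destroy the lower bound.

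The paper fills this gap with a quasicentral approximate unit $(h_\lambda)$ for $A$ in $M(A)$. One first chooses $\delta>0$ with $\pi_A\big(\sum_j a_j^2\big)\geq\delta$, then uses $\rr(M(A))\leq m$ to find a unimodular tuple $(b_0,\ldots,b_m)$ with $\|a_j-b_j\|<\min\{\varepsilon,\delta/2\}$ (and an additional smallness condition guaranteeing $\sum_j(a_jb_j+b_ja_j)\geq\delta$ modulo $A$), and sets $c_{j,\lambda}:=h_\lambda^{1/2}b_jh_\lambda^{1/2}+(1-h_\lambda)^{1/2}a_j(1-h_\lambda)^{1/2}$, which differs from $a_j$ by an element of $A_\sa$. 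The crux is the uniform lower bound $\sum_j c_{j,\lambda}^2\geq\min\{\gamma,\delta/2\}$ for $\lambda$ large (with $\gamma$ a lower bound for $\sum_j b_j^2$), established by passing to the quotient of $\prod_\Lambda M(A)$ by the nets tending to zero, where $h:=[(h_\lambda)]$ becomes central; there the sum expands as $\sum_j b_j^2\,h^2+\sum_j(a_jb_j+b_ja_j)(1-h)h+\sum_j a_j^2(1-h)^2$, and positivity of the middle term is exactly what the choice $\|a_j-b_j\|<\delta/2$ and quasicentrality buy. None of this is automatic, so your outline points at the right construction but is missing its technical heart.
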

\begin{proof}
The first inequality follows from \autoref{prp:rrLeqLn}.
To prove the second inequality, set $n:=\rr(M(A))$, which we may assume to be finite.
Let $\pi_A \colon M(A)\to M(A)/A$ denote the quotient map.

To verify $\xrr(A)\leq n$, let $(a_0,\ldots,a_n) \in M(A)_\sa^{n+1}$ with unimodular image in $M(A)/A$, and let $\varepsilon>0$. 
We need to find $(c_0,\ldots,c_n) \in\Lg_{n+1}(M(A))_\sa$ satisfying
\[
\| a_j - c_j \|<\varepsilon, \andSep
\pi_A(a_j) = \pi_A(c_j) \quad \text{for $j=0,\ldots,n$}.
\]

Let $(h_\lambda)_{\lambda\in\Lambda}$ be a quasicentral approximate unit for $A$ in $M(A)$.
We may assume that $0\leq h_\lambda\leq 1$ for each $\lambda$;
see for example \cite[Proposition~II.4.3.2, p.82]{Bla06OpAlgs}.

Set $P:=\prod_\Lambda M(A)$ and define
\[
J:=\big\{ (x_\lambda)_\lambda \in P : \lim_\lambda\|x_\lambda\| = 0 \big\},
\]
which is an ideal in $P$.
Set $Q:=P/J$. 
We use $[(x_\lambda)]$ to denote the image of $(x_\lambda)_\lambda\in P$ in $Q$.
We have a natural inclusion $M(A)\to P$, sending $a\in M(A)$ to the constant tuple $(a)_\lambda$.
To clarify notation, we will use bold symbols to denote elements in $Q$.
Set
\[
\tuple{h}:=[(h_\lambda)_\lambda]
\]
and note that $\tuple{h}$ commutes with every element in the image of $M(A)\to P\to Q$.

Claim:
Let $x,y\in M(A)_\sa$ with $\pi(x)\leq \pi(y)$. 
Set $\tuple{x}:=[(x)_\lambda]$ and $\tuple{y}:=[(y)_\lambda]$.
Then
\[
\tuple{x} (1-\tuple{h}) 
\leq \tuple{y} (1-\tuple{h})
\]
in $Q$.
(Note that $\tuple{x} (1-\tuple{h})$ and $\tuple{y} (1-\tuple{h})$ are self-adjoint.)

To prove the claim, choose $w\in M(A)/A$ such that $\pi(y-x)=w^*w$.
Lift $w$ to obtain $z\in M(A)$ with $\pi(z)=w$, and set $\tuple{z}:=[(z)_\lambda]$.
Then 
\[
\big\| (\tuple{x} - \tuple{y} - \tuple{z}^*\tuple{z})(1-\tuple{h}) \big\| 
= \pi(y-x-z^*z) = 0.
\]
The claim follows using that $\tuple{z}^*\tuple{z}(1-\tuple{h}) = \tuple{z}^*(1-\tuple{h})\tuple{z} \geq 0$.

The construction of the desired tuple $(z_0,\ldots,z_n)$ proceeds in several steps.

Step~1.
Set
\[
\tuple{a}_0 := [(a_0)_\lambda], \ldots,
\tuple{a}_n := [(a_n)_\lambda].
\]
Choose $\delta>0$ such that $\pi\big( \sum_j a_j^*a_j \big) \geq \delta$.
By Claim~1, we have
\[
\sum_{j=0}^n \tuple{a}_j^* \tuple{a}_j (1-\tuple{h}) \geq \delta (1-\tuple{h}).
\]

Step~2.
Using upper semicontinuity of the spectrum (\cite[Theorem~10.20]{Rud91FABook2ed}), we find $\varepsilon_1>0$ such that $\pi(x)\geq \delta$ for every $x\in M(A)_\sa$ with $\|x-2\sum_j a_j^*a_j\|<\varepsilon_1$.
Using that $\rr(M(A))\leq n$, choose $(b_0,\ldots,b_n) \in \Lg_{n+1}(M(A))_\sa$ such that $\| a_j - b_j \|$ is so small to ensure that $\| a_j - b_j \|  <\min\{\varepsilon,\delta/2\}$ for $j=0,\ldots,n$ and also
\[
\left\| \sum_{j=0}^n a_j^*b_j + \sum_{j=0}^n b_j^*a_j - 2\sum_{j=0}^n a_j^*a_j \right\|<\varepsilon_1.
\]
Set
\[
\tuple{b}_0 := [(b_0)_\lambda], \ldots,
\tuple{b}_n := [(b_n)_\lambda].
\]
By Claim~1, we have
\[
\left( \sum_{j=0}^n \tuple{a}_j^* \tuple{b}_j + \sum_{j=0}^n \tuple{b}_j^* \tuple{a}_j \right)(1-\tuple{h}) 
\geq \delta(1-\tuple{h}).
\]

Since $(b_0,\ldots,b_n)$ is unimodular, there is $\gamma>0$ such that
\[
\sum_{j=0}^n b_j^*b_j \geq \gamma.
\]

Step~3.
For $j=0,\ldots,n$ and $\lambda\in\Lambda$, set
\[
c_{j,\lambda} := h_\lambda^{1/2}b_jh_\lambda^{1/2} + (1-h_\lambda)^{1/2}a_j(1-h_\lambda)^{1/2}.
\]
We will show that $(c_{0,\lambda},\ldots,c_{n,\lambda})\in M(A)^{n+1}_\sa$ has the desired properties for $\lambda$ large enough.
It is clear that $\pi(c_{j,\lambda}) = \pi(a_j)$ for every $j=0,\ldots,n$ and for every $\lambda$.
Further, using that $(h_\lambda)_\lambda$ is quasicentral, we have
\begin{align*}
&\limsup_\lambda \| a_j - c_{j,\lambda} \| \\
&\leq \limsup_\lambda \left\| a_j h_\lambda - h_\lambda^{1/2}b_jh_\lambda^{1/2} \right\| 
+ \limsup_\lambda \left\| a_j(1-h_\lambda) - (1-h_\lambda)^{1/2}a_j(1-h_\lambda)^{1/2} \right\| \\
&\leq \|a_j-b_j\| < \varepsilon,
\end{align*}
for each $j = 0,\ldots,n$.
Thus, we may choose $\lambda_1\in\Lambda$ such that $\| a_j - c_{j,\lambda} \| <\varepsilon$ for all $j=0,\ldots,n$ and all $\lambda\geq\lambda_1$.

For $j=0,\ldots,n$ set
\[
\tuple{c_j} 
:= [(c_{j,\lambda})_\lambda]
=\tuple{b_j}\tuple{h}+\tuple{a_j}(1-\tuple{h}).
\]
It follows that
\begin{align*}
\sum_{j=0}^n \tuple{c_j}^*\tuple{c_j}
&= \sum_{j=0}^n \tuple{b_j}^*\tuple{b_j}\tuple{h}^2
+ \sum_{j=0}^n (\tuple{a_j}^* \tuple{b_j} + \tuple{b_j}^* \tuple{a_j})(1-\tuple{h})\tuple{h}
+ \sum_{j=0}^n \tuple{a_j}^*\tuple{a_j}(1-\tuple{h})^2 \\
&\geq \gamma \tuple{h}^2 + \frac{\delta}{2} 2 (1-\tuple{h})\tuple{h} + \delta(1-\tuple{h})^2 \\
&\geq \min\left\{ \gamma,\frac{\delta}{2} \right\}.
\end{align*}
Thus, $\sum_{j=0}^n \tuple{c_j}^*\tuple{c_j}$ is invertible in $Q$, which allows us to choose $\lambda\geq\lambda_1$ such that $\sum_{j=0}^n c_{j,\lambda}^* c_{j,\lambda}$ is invertible in $M(A)$.
Now the tuple $(c_{0,\lambda},\ldots,c_{n,\lambda})$ has the desired properties.
\end{proof}

\section{Permanence properties of the extension real rank}
\label{sec:Permanence}

We show that the extension real rank behaves well with respect to extensions;
see \autoref{prp:xrrExt}.
We also show that a nonseparable \ca{} has extension real rank at most $n$ if `sufficiently many' of its separable sub-\ca{s} do;
see \autoref{prp:xrrNonseparable}.
This allows us to reduce some computations of the extension real rank to the separable case.

\begin{lma}
\label{prp:LambdaExt}
Let $I$ be an ideal in a \ca{}~$A$, and let $m\in\NN$ be such that~$I$ and~$A/I$ have property~$(\Lambda_m)$.
Then $A$ has property~$(\Lambda_m)$ as well.
\end{lma}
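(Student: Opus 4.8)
The plan is to exploit that property $(\Lambda_m)$ admits the flexible reformulation of \autoref{prp:LnGeneralExt}, which lets us use $(\Lambda_m)$ of an algebra in \emph{any} extension having it as an ideal and with unital total algebra. The key structural observation I would establish first is that $I$ is in fact an ideal of $M(A)$, not merely of $A$. Indeed, if $x \in M(A)$ and $i \in I$, then factoring $i = jk$ with $j,k \in I$ (possible by Cohen factorization, or by an approximate-unit argument) gives $xi = (xj)k$, where $xj \in A$ since $A$ is an ideal in $M(A)$, and then $(xj)k \in I$ since $I$ is an ideal in $A$; the same argument gives $ix \in I$. Consequently $M(A)$ is a unital \ca{} with nested ideals $I \subseteq A$, and we obtain two extensions with unital total algebras,
\[
0 \to I \to M(A) \to M(A)/I \to 0
\quad\text{and}\quad
0 \to A/I \to M(A)/I \to M(A)/A \to 0,
\]
where the identification $(M(A)/I)/(A/I) \cong M(A)/A$ is the third isomorphism theorem.

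Now let $(a_0,\ldots,a_m) \in M(A)_\sa^{m+1}$ have $(\pi_A(a_0),\ldots,\pi_A(a_m)) \in \Lg_{m+1}(M(A)/A)_\sa$, and fix $\varepsilon > 0$. I would perturb in two steps. In the first step I pass to $M(A)/I$ and apply \autoref{prp:LnGeneralExt} to the second extension above, with ideal $A/I$ (which has $(\Lambda_m)$ by hypothesis): the images of the $a_j$ in $M(A)/I$ are self-adjoint and their further images in $M(A)/A$ coincide with the unimodular $\pi_A(a_j)$, so I obtain $(\bar b_0,\ldots,\bar b_m) \in \Lg_{m+1}(M(A)/I)_\sa$ that are within $\varepsilon/2$ of the images of the $a_j$ and have the same image in $M(A)/A$. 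In the second step I apply \autoref{prp:LnGeneralExt} to the first extension, with ideal $I$ (which has $(\Lambda_m)$): feeding in a self-adjoint lift of the $\bar b_j$ whose image in $M(A)/I$ is unimodular, I obtain $(c_0,\ldots,c_m) \in \Lg_{m+1}(M(A))_\sa$ within $\varepsilon/2$ of the lift and agreeing with it modulo $I$, hence unimodular in $M(A)$ itself.

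The step requiring care is the passage between the two extensions, namely lifting the $\bar b_j$ from $M(A)/I$ to self-adjoint elements of $M(A)$ with the right norm and congruence control. Since $\bar b_j$ and the image of $a_j$ agree modulo $A/I$, their difference lies in $A/I$ and has norm $< \varepsilon/2$ there; I would lift this difference to a self-adjoint element $d_j \in A$ with $\|d_j\| < \varepsilon/2$ and set $b_j := a_j + d_j$, so that $b_j$ is self-adjoint, satisfies $\|b_j - a_j\| < \varepsilon/2$, differs from $a_j$ only by an element of $A$, and reduces to $\bar b_j$ modulo $I$. Applying the second step to this $b_j$ and then estimating $\|a_j - c_j\| \leq \|a_j - b_j\| + \|b_j - c_j\| < \varepsilon$ finishes the argument; moreover $c_j \equiv b_j \pmod I$ and $b_j \equiv a_j \pmod A$ give $\pi_A(c_j) = \pi_A(a_j)$ because $I \subseteq A$. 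The main obstacle is thus a bookkeeping one---ensuring that unimodularity and the congruence modulo $A$ survive both perturbations---together with the preliminary verification that $I$ is an ideal in $M(A)$, which is precisely what makes the intermediate quotient $M(A)/I$, and hence the entire two-step scheme, available.
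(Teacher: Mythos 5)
Your proof is correct and follows essentially the same route as the paper: factor the problem through the nested ideals $I \subseteq A \subseteq M(A)$, apply \autoref{prp:LnGeneralExt} first to the extension $0 \to A/I \to M(A)/I \to M(A)/A \to 0$ and then, after lifting, to $0 \to I \to M(A) \to M(A)/I \to 0$. Your extra care in the lifting step (writing $b_j = a_j + d_j$ with $d_j \in A_\sa$ of controlled norm) and the preliminary check that $I$ is an ideal of $M(A)$ are details the paper leaves implicit, but the argument is the same.
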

\begin{proof}
Let $\pi_A \colon M(A)\to M(A)/A$ denote the quotient map, let $\varepsilon>0$, and let $(a_0,\ldots,a_m) \in M(A)_\sa^{m+1}$ with unimodular image in $M(A)/A$.
We need to find $(c_0,\ldots,c_m) \in \Lg_{m+1}(M(A))_\sa$ satisfying
\[
\| a_j - c_j \| < \varepsilon, \andSep
\pi_A(a_j) = \pi_A(c_j) \quad \text{for $j=0,\ldots,m$}.
\]

We consider $I$ as an ideal in $M(A)$, and we let $p\colon M(A)\to M(A)/I$ denote the quotient map.
Further, $A/I$ is an ideal in $M(A)/I$, such that $[M(A)/I]/[A/I]$ is naturally isomorphic to $M(A)/A$, and we let $q\colon M(A)/I\to M(A)/A$ denote the corresponding quotient map.
The situation is shown in the following commutative diagram with exact rows and columns:
\[
\xymatrix@R-10pt{
& & & 0 \ar[d] \\
& & & A/I \ar[d] \\
0 \ar[r]
& I \ar[r] & M(A) \ar[r]^-{p} \ar[d]^{\id}
& M(A)/I \ar[r] \ar[d]^{q}
& 0 \\
0 \ar[r]
& A \ar[r] 
& M(A) \ar[r]^-{\pi_A} 
& M(A)/A \ar[r] \ar[d]
& 0 \\
& & & 0
}.
\]

For each $j$, set $a_j' := p(a_j) \in M(A)/I$.
Then $q(a_j') = \pi_A(a_j)$, which shows that $(a_0',\ldots,a_m') \in (M(A)/I)^{m+1}_\sa$ has unimodular image in~$M(A)/A$.
Using that $A/I$ has~$(\Lambda_m)$, apply \autoref{prp:LnGeneralExt} for the extension $0 \to A/I \to M(A)/I \to M(A)/A \to 0$ to obtain $(b_0',\ldots,b_m') \in \Lg_{m+1}(M(A)/I)_\sa$ satisfying
\[
\| a_j' - b_j' \| < \varepsilon/2, \andSep
q(a_j') = q(b_j') \quad \text{for $j=0,\ldots,m$}.
\]
Next, choose $(b_0,\ldots,b_m) \in M(A)^{m+1}_\sa$ satisfying
\[
\| a_j - b_j \| <\varepsilon/2, \andSep
p(b_j) = b_j' \quad \text{for $j=0,\ldots,m$}.
\]

Then, using that the ideal $I$ has~$(\Lambda_m)$, we can apply \autoref{prp:LnGeneralExt} for the extension $0 \to I \to M(A) \to M(A)/I \to 0$ to obtain $(c_0,\ldots,c_m)\in\Lg_{m+1}(M(A))_\sa$ such that 
\[
\| b_j - c_j \| < \varepsilon/2, \andSep
p(b_j) = p(c_j) \quad \text{for $j=0,\ldots,m$}.
\]
Then $(c_0,\ldots,c_m)$ has the desired properties.
\end{proof}

\begin{prp}
\label{prp:xrrExt}
Let $I$ be an ideal in a \ca{}~$A$.
Then
\[
\xrr(A) \leq \max\big\{ \xrr(I), \xrr(A/I) \big\}.
\]
\end{prp}
\begin{proof}
Set $n:=\max\{\xrr(I),\xrr(A/I)\}$, which we may assume to be finite.
Let $m\geq n$.
By assumption, $I$ and $A/I$ have property~$(\Lambda_m)$.
Applying \autoref{prp:LambdaExt}, we get that $A$ has property~$(\Lambda_m)$.
\end{proof}

By \autoref{prp:xrrExt}, the extension real rank behave well with respect to extensions.
It is unclear if the real rank of the multiplier algebra enjoys the same permanence property:

\begin{qst}
\label{qst:rrMultExt}
Let $I$ be an ideal in a \ca{}~$A$.
Do we have
\[
\rr(M(A)) \leq \max\big\{ \rr(M(I)), \rr(M(A/I)) \big\}?
\]
\end{qst}

By \cite[Theorem~4.8(i)]{BroPed09Limits}, we have a positive answer to \autoref{qst:rrMultExt} in the case that $A$ is $\sigma$-unital and $\rr(M(I))=\rr(M(A/I))=0$.
In \cite{Thi24arX:RRMult}, we show that the answer is also positive under additional assumptions on $I$ but higher real rank of $M(A/I)$.

\begin{rmks}
\label{rmk:xrrBadBehaviour}
(1)
The real rank does not increase when passing to ideals or quotients of \ca{s}.
This is different for the extension real rank.
In particular, the extension real rank is not a dimension theory in the sense of \cite[Definition~1]{Thi13TopDimTypeI}.

To see that the extension real rank may increase when passing to quotients, let $\mathcal{B}$ denote the algebra of bounded, linear operators on a separable, infinite dimensional Hilbert space, and let $Q:=\mathcal{B}/\KK$ denote the Calkin algebra.
Then $Q\otimes\KK$ is a quotient of $\Bdd\otimes\KK$.
We have $\xrr(Q\otimes\KK)=1$ by \autoref{exa:StableCalkin}.
On the other hand, we showed in \cite{Thi24arX:RRMult} that $\xrr(\Bdd\otimes\KK)=0$.

To see that the extension real rank may increase when passing to ideals, let~$A$ be any \ca{} with $\rr(A)<\xrr(A)$, for example $Q\otimes\KK$; 
see \autoref{exa:StableCalkin}.
Then $A$ is necessarily nonunital, and we can consider $A$ as an ideal in its minimal unitization~$\widetilde{A}$.
Then $\xrr(A) > \rr(A) = \rr(\widetilde{A}) = \xrr(\widetilde{A})$.

(2)
The extension rank may increase when passing to inductive limits.
For example, consider the Calkin algbebra $Q=\Bdd/\KK$, and view $Q\otimes\KK$ as an inductive limit of $Q\otimes M_n(\CC)$.
Since $Q \otimes M_n(\CC)$ is unital, using \autoref{prp:xrrUnital} at the first step, we have
\[
\xrr(Q \otimes M_n(\CC))
= \rr(Q \otimes M_n(\CC))
= 0.
\]
On the other hand, we have $\xrr(Q\otimes\KK)=1$ by \autoref{exa:StableCalkin}.
\end{rmks}

Next, we prove upper bounds for the extension real rank of a nonseparable \ca{} by restricting to suitable separable sub-\ca{s}.

\begin{pgr}
\label{pgr:LS}
Given a \ca{} $A$, one says that a collection $\mathcal{F}$ of separable sub-\ca{s} of $A$ is \emph{$\sigma$-complete} if for every countable, upward directed subset $\mathcal{F}_0 \subseteq \mathcal{F}$ the \ca{} $\overline{\bigcup \mathcal{F}_0}$ belongs to $\mathcal{F}$.
Further, $\mathcal{F}$ is said to be \emph{cofinal} if for every separable sub-\ca{} $B_0 \subseteq A$ there exists $B \in \mathcal{F}$ with $B_0 \subseteq B$.
A $\sigma$-complete and cofinal family of separable sub-\ca{s} of $A$ is called a \emph{club};
see \cite[Section~6.2]{Far19BookSetThyCAlg}
Given clubs $\mathcal{F}_n$ of separable sub-\ca{s} of $A$ for $n\in\NN$, the intersection $\bigcap_n \mathcal{F}_n$ is also a club.

A property $\mathcal{P}$ of \ca{s} is said to satisfy the \emph{L\"{o}wenheim-Skolem condition} if for every \ca{} $A$ with $\mathcal{P}$ there exists a club of separable sub-\ca{s} of $A$ that each have $\mathcal{P}$.

Many every-day properties of \ca{s} satisfy the L\"{o}wenheim-Skolem condition.
For example, the Downwards L\"{o}wenheim-Skolem theorem (\cite[Theorem~7.1.9]{Far19BookSetThyCAlg}) shows that every axiomatizable property satisfies the L\"{o}wenheim-Skolem condition; 
and \cite[Theorem~2.5.1]{FarHarLupRobTikVigWin21ModelThy} lists 20 such properties, including `having real rank zero', `having stable rank one', and `being simple and purely infinite'.
Further, every property that is \emph{separably inheritable} in the sense of \cite[Definition~II.8.5.1]{Bla06OpAlgs} satisfies the L\"{o}wenheim-Skolem condition, and \cite[Section~II.8.5]{Bla06OpAlgs} provides many examples of separably inheritable properties, including `simplicitiy' and `vanishing $K_1$-group'.

By \cite[Proposition~4.11]{ThiVil24NowhereScattered}, nowhere scatteredness satisfies the L\"{o}wen\-heim-Skolem condition, and \cite[Lemma~3.2]{Thi23grSubhom} allows to relate properties with the L\"{o}wen\-heim-Skolem condition of ideals and quotients.

In \cite[Definition~1]{Thi13TopDimTypeI}, I introduced the concept of an abstract \emph{dimension theory} for \ca{s} as an assignment $d$ that to each \ca{} $A$ associates a number $d(A)$ such that in said definition conditions (D1)-(D6) are satisfied.
It follows from (D5) and (D6) that for each dimension theory $d$ and each $n \in \NN$, the property `$d(\cdot) \leq n$' satisfies the L\"{o}wenheim-Skolem condition;
see \cite[Paragraph~4.1]{Thi21GenRnk}.
Examples of dimension theories for \ca{s} include the real rank and the stable rank.

Thus, while it remains open if for given $n \geq 1$ the property `real rank $\leq n$' is axiomatizable (see \cite[Question~3.9.3]{FarHarLupRobTikVigWin21ModelThy}), it does at least satisfy the L\"{o}wen\-heim-Skolem condition.
\end{pgr}

\begin{thm}
\label{prp:xrrNonseparable}
Let $A$ be a (nonseparable) \ca, and let $n\in\NN$.
Assume that~$A$ contains a club of separable sub-\ca{s} $B \subseteq A$ satisfying $\xrr(B) \leq n$.
Then $\xrr(A)\leq n$.
\end{thm}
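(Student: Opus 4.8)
The plan is to prove that $\xrr(A) \leq n$ by verifying that $A$ has property $(\Lambda_m)$ for every $m \geq n$. Since this must hold for all such $m$, and the argument is uniform in $m$, I would fix an arbitrary $m \geq n$ and show $A$ has $(\Lambda_m)$. By the definition of $\xrr$ (\autoref{dfn:xrr}), each separable $B$ in the club satisfies $(\Lambda_m)$ as well, since $\xrr(B) \leq n \leq m$ forces $B$ to have $(\Lambda_{m'})$ for all $m' \geq n$. So the task reduces to transferring the local property $(\Lambda_m)$ for club members up to the nonseparable algebra $A$.

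\medskip

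The main obstacle is that property $(\Lambda_m)$ is a statement about tuples in the \emph{multiplier algebra} $M(A)$ and its quotient $M(A)/A$, not about $A$ itself. A separable sub-\ca{} $B \subseteq A$ does not in any canonical way give rise to a sub-\ca{} of $M(A)$, and $M(A)$ is typically far from being an inductive limit of the $M(B)$. Thus the naive reflection argument does not directly apply. To overcome this, I would fix a tuple $(a_0,\ldots,a_m) \in M(A)_\sa^{m+1}$ whose image under $\pi_A \colon M(A) \to M(A)/A$ is unimodular, together with $\varepsilon > 0$, and a witness $(c_0,\ldots,c_m) \in M(A)$ for the unimodularity, that is, elements with $\sum_j c_j \pi_A(a_j) = 1$ lifted to $M(A)$ so that $\sum_j c_j a_j - 1 \in A$. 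The goal is to find a separable sub-\ca{} $B$ in the club that is large enough to "see" all the relevant data, and then apply $(\Lambda_m)$ for $B$ in a way that produces the desired perturbation in $M(A)$.

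\medskip

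Concretely, I would exploit the fact that the multiplier elements $a_j$ act on $A$, so that for each $a \in A$ the products $a_j a$ and $a a_j$ lie in $A$. The strategy is to build, via a standard closing-off (or skeleton) argument over the countable intersection of the given club with auxiliary clubs, a separable $B$ in the family that is invariant under multiplication by the $a_j$ and $c_j$ (so that the $a_j$ restrict to multipliers $\hat a_j \in M(B)$ and $\pi_B(\hat a_j)$ has unimodular image witnessed by the restricted $c_j$). Since $B$ has $(\Lambda_m)$, I obtain a unimodular perturbation $(b_0,\ldots,b_m) \in \Lg_{m+1}(M(B))_\sa$ with $\|\hat a_j - b_j\| < \varepsilon$ and $\pi_B(b_j) = \pi_B(\hat a_j)$. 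The remaining, and most delicate, step is to convert this perturbation of multipliers of $B$ into a perturbation of the original multipliers of $A$ satisfying $\pi_A(a_j) = \pi_A(b_j')$: the condition $\pi_B(b_j) = \pi_B(\hat a_j)$ says the correction $b_j - \hat a_j$ lies in $M(B)$ and maps $B$ into $B$, but to preserve the quotient class in $M(A)/A$ one needs the correction, viewed as an operator on $A$, to land in $A$. I expect this to be arrangeable provided the club is chosen so that $B$ is a hereditary or otherwise sufficiently rich sub-\ca{} for which $M(B)$ embeds compatibly into $M(A)$ modulo $A$; the countable cofinality and $\sigma$-completeness of the club, combined with the separability of all data involved in a single tuple, guarantee that such a $B$ exists. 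Verifying this compatibility — that the multiplier perturbation on $B$ genuinely yields a multiplier perturbation on $A$ with the correct image in $M(A)/A$ — is the heart of the argument and the step I would treat most carefully.
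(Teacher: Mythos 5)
Your proposal correctly isolates the difficulty (property $(\Lambda_m)$ lives in $M(A)$, which is not assembled from the multiplier algebras of separable sub-\ca{s} of $A$), but the route you choose to resolve it has a genuine gap at exactly the step you flag as ``the heart of the argument''. You apply $(\Lambda_m)$ for a club member $B \subseteq A$ \emph{via its definition}, i.e.\ inside $M(B)$, obtaining $b_j = \hat a_j + d_j$ with $d_j \in B$ and $\sum_j b_j^* b_j$ invertible in $M(B)$, and then hope to transport this to $M(A)$. Setting $b_j' := a_j + d_j \in M(A)$ does preserve the norm estimate and the coset modulo $A$ (since $d_j \in B \subseteq A$), but unimodularity is lost: invertibility of $\sum_j b_j^* b_j$ in $M(B)$ only controls the action on the Hilbert module $B$, and says nothing about $\sum_j (b_j')^* b_j'$ acting on elements of $A$ outside $B$ --- recall that the original hypothesis gives positivity of $\sum_j \pi_A(a_j)^*\pi_A(a_j)$ only modulo $A$, so no lower bound survives on any fixed separable piece of $A$. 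Moreover, the compatibility you ask for (``$M(B)$ embeds compatibly into $M(A)$ modulo $A$'') is not available: even for hereditary $B \subseteq A$ there is no natural map $M(B) \to M(A)$, and the paper's remark following this theorem points out precisely that there is little connection between $M(A)$ and the multiplier algebras of separable sub-\ca{s} of $A$ (e.g.\ $M(B)$ can be nonseparable while $M(A) = \widetilde{A}$).

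The paper's proof avoids $M(B)$ entirely. The key input is \autoref{prp:LnGeneralExt}, which upgrades $(\Lambda_n)$ of an algebra $I$ from a statement about the universal extension $0 \to I \to M(I) \to M(I)/I \to 0$ to the analogous perturbation statement for \emph{every} unital extension $0 \to I \to E \to E/I \to 0$. One then forms the club $\mathcal{G}$ of separable sub-\ca{s} $D \subseteq M(A)$ with $D \cap A \in \mathcal{F}$, picks $D \in \mathcal{G}$ containing the given tuple $(a_0,\ldots,a_n)$ (and the unit, using the lower bound $\sum_j \pi_A(a_j)^*\pi_A(a_j) \geq \delta$ to see that $D/(D\cap A)$ is unital and the tuple is unimodular there), and applies \autoref{prp:LnGeneralExt} to the separable extension $0 \to D \cap A \to D \to D/(D\cap A) \to 0$. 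The perturbed tuple lies in $D \subseteq M(A)$, its corrections lie in $D \cap A \subseteq A$ (so the image in $M(A)/A$ is unchanged), and invertibility of $\sum_j b_j^* b_j$ in the unital subalgebra $D$ gives invertibility in $M(A)$ because the units agree. Your closing-off construction of a sufficiently invariant separable subalgebra is in the right spirit, but it must be performed inside $M(A)$, not inside $A$, and the transfer mechanism must be \autoref{prp:LnGeneralExt} rather than an embedding of $M(B)$ into $M(A)$.
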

\begin{proof}
Let $\mathcal{F}$ be a $\sigma$-complete, cofinal family of separable sub-\ca{s} of $A$ with extension real rank at most $n$.
Let $\pi_A \colon M(A)\to M(A)/A$ denote the quotient map, let $(a_0,\ldots,a_n) \in M(A)_\sa^{n+1}$ with unimodular image in $M(A)/A$, and let $\varepsilon>0$. 
We need to find $(b_0,\ldots,b_n) \in\Lg_{n+1}(M(A))_\sa$ satisfying
\[
\| a_j - b_j \| < \varepsilon, \andSep
\pi_A(a_j) = \pi_A(b_j) \quad \text{for $j=0,\ldots,n$}.
\]

Set
\[
\mathcal{G} := \big\{ D \subseteq M(A) : D \text{ separable sub-\ca{}}, D \cap A \in \mathcal{F} \big\}.
\]
By \cite[Lemma~3.2(1)]{Thi23grSubhom}, $\mathcal{G}$ is a club of separable sub-\ca{s} of $M(A)$.
Since $\mathcal{G}$ is cofinal, we obtain $D \in \mathcal{G}$ containing $a_0,\ldots,a_n$.
The situation is shown in the following diagram with exact rows:

\[
\xymatrix@R-10pt{
0 \ar[r] 
& A \ar[r] \ar@{}[d]|{\subseteqRotatedUp}
& M(A) \ar[r]^-{\pi_A} \ar@{}[d]|{\subseteqRotatedUp}
& M(A)/A \ar[r] \ar@{}[d]|{\subseteqRotatedUp}
& 0 \\
0 \ar[r] 
& D \cap A \ar[r] 
& D \ar[r] 
& D/(D \cap A) \ar[r] 
& 0.
}
\]

Using that $(\pi_A(a_0),\ldots,\pi_A(a_n))$ belongs to $\Lg_{n+1}(M(A)/A)_\sa$, we can find $\delta>0$ such that $\sum_{j=0}^n \pi_A(a_j)^*\pi_A(a_j) \geq \delta$.
It follows that $D/(A \cap D)$ is a unital sub-\ca{} of $M(A)/A$ and that $(\pi_A(a_0),\ldots,\pi_A(a_n))$ belongs to $\Lg_{n+1}(D/(A \cap D))_\sa$.
Since $D \cap A \in \mathcal{F}$, we have $\xrr(D \cap A) \leq n$, and we can apply \autoref{prp:LnGeneralExt} for the extension $0 \to D \cap A \to D \to D/(D \cap A) \to 0$ to obtain $(b_0,\ldots,b_n) \in \Lg_{n+1}(D)_\sa$ satisfying
\[
\| a_j - b_j \| < \varepsilon, \andSep
\pi_A(a_j) = \pi_A(b_j) \quad \text{for $j=0,\ldots,n$}.
\]
Then $(b_0,\ldots,b_n)$, regarded as a tuple in $M(A)$, has the desired properties.
\end{proof}

\begin{rmk}
Given $n \in \NN$, it remains unclear if the property `extension real rank $\leq n$' satisfies the L\"{o}wenheim-Skolem condition.
The problem is that for a (nonseparable) \ca{} $A$, there is little connection between $M(A)$ and the multiplier algebras of the separable sub-\ca{s} of $A$.

For example, there exist nonunital, nonseparable \ca{s} $B$ with real rank zero and such that $M(B)$ is isomorphic to the minimal unitization $\widetilde{B}$;
see \cite{Sak68DerivationsSimple}, also \cite{GhaKos18ExtCpctOpsTrivMult}.
On the other hand, if $D \subseteq B$ is a nonunital, separable sub-\ca{}, then~$M(D)$ is nonseparable (\cite[Lemma~5.3]{AkeEllPedTom76DerivMultCAlg}) and therefore admits no natural inclusion into $M(B)$.

Similarly, given a nonseparable \ca{} $C$ with $\rr(M(C)) \leq n$, it is unclear if there exists a club of separable sub-\ca{s} $C' \subseteq C$ with $\rr(M(C')) \leq n$.
\end{rmk}

\section{C*-algebras with low extension real rank}
\label{sec:LowXRR}

In this section, we show that a \ca{} has property $(\Lambda_0)$ if and only if it has real rank zero and every projection in its corona algebra can be lifted to a projection in its multiplier algebra;
see \autoref{prp:charL0}.
We use this to explicate the relation between extension real rank zero and one;
see \autoref{prp:Char-xrr0}.

In the second half of the section, we present various classes of \ca{s} that have extension real rank zero or one, which leads to a computation of the real rank for extensions by such algebras;
see \autoref{prp:ComputeWithLowXRR}.
The computations of the extension real rank follow by combining our technique of reduction to the separable case from \autoref{sec:Permanence} with deep results of Brown \cite{Bro16HigherRRSR}, Lin \cite{Lin93ExpRank} and Zhang \cite{Zha91QuasidiagInterpolMultProj, Zha92RR0CoronaMultiplier1}.

\begin{lma}
\label{prp:LiftingApproxUnitProj}
Let $A$ be a \ca{} with $\rr(A)=0$, let $B \subseteq M(A)$ be a hereditary sub-\ca{}, and assume that every projection in $M(A)/A$ lifts to a projection in $M(A)$.
Let $\pi_A \colon M(A)\to M(A)/A$ denote the quotient map.
Given $b \in B$ and a projection $\bar{p} \in B/(B \cap A)$ such that $\| \pi_A(b)(1-\bar{p})\| < \varepsilon$, there exists a projection $p \in B$ lifting $\bar{p}$ and such that $\|b(1-p)\| < \varepsilon$.

In particular, $B$ has an approximate unit consisting of projections if and only if the quotient $B/(B \cap A)$ does.
\end{lma}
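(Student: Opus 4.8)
The plan is to produce the projection in two stages: first build \emph{some} projection $p_0 \in B$ lifting $\bar p$, and then enlarge it by an orthogonal projection drawn from the ideal to get the norm estimate. Write $I := B \cap A$. Since $B$ is hereditary in $M(A)$, the intersection $I$ is a hereditary sub-\ca{} of $A$, so $\rr(I)=0$ by \cite[Corollary~2.8]{BroPed91CAlgRR0}, and the same holds for every hereditary sub-\ca{} of $I$. I identify $B/(B\cap A)$ with the hereditary sub-\ca{} $\pi_A(B) \subseteq M(A)/A$, so that $\bar p$ is a projection of $M(A)/A$ lying in $\pi_A(B)$.

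The crux is the construction of a projection $p_0 \in B$ with $\pi_A(p_0)=\bar p$. The naive idea — lift $\bar p$ to a positive contraction $h \in B$ and round it to a projection using $\rr(I)=0$ — fails, because rounding meets a $K$-theoretic obstruction living in $K_1(I)$ (this is the phenomenon behind \autoref{prp:rrzeroExt}), and the hypothesis only tells us that the \emph{analogous} obstruction vanishes in $K_1(A)$, whereas $K_1(I)\to K_1(A)$ need not be injective. I would circumvent this by passing to a corner where the obstruction is trivial for a concrete reason. Using the hypothesis, lift $\bar p$ to a projection $q\in M(A)$, and fix a positive contraction $h\in B$ with $\pi_A(h)=\bar p$. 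Put $v:=qh$ and $g':=vv^*=qh^2q\in qM(A)q$. Now $qM(A)q$ is an extension of the \emph{unital} \ca{} $\bar p(M(A)/A)\bar p$ by the ideal $qAq$, which is a hereditary sub-\ca{} of $A$ and hence has real rank zero; moreover $\pi_A(g')=\bar p$ is the \emph{unit} of the quotient, so $g'$ is invertible modulo $qAq$. There is therefore no obstruction: using $\rr(qAq)=0$ I would find a projection $r\in qAq$ such that $P:=q-r$ (so $q-P\in qAq$, whence $\pi_A(P)=\bar p$) satisfies $Pg'P\geq\gamma P$ for some $\gamma>0$, i.e. $Pg'P=(Pv)(Pv)^*$ is invertible in $PM(A)P$.

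Next I transport $P$ back into $B$. Since $B$ is hereditary and $P\in M(A)$, the element $hPh=(Pv)^*(Pv)$ lies in $B$, and it has the same nonzero spectrum as $(Pv)(Pv)^*=Pg'P$, so $\operatorname{sp}(hPh)\subseteq\{0\}\cup[\gamma,1]$. This spectral gap lets me set $p_0:=\chi_{[\gamma,1]}(hPh)$, a projection in $B$; as $\pi_A(hPh)=\bar p$ and the cut-off function fixes $\{0,1\}$, we get $\pi_A(p_0)=\bar p$. For the norm estimate, set $\varepsilon_0:=\|\pi_A(b)(1-\bar p)\|<\varepsilon$ and $c:=(1-p_0)b^*b(1-p_0)\in B_+$; then $\|\pi_A(c)\|=\varepsilon_0^2$, so $(c-\varepsilon_0^2)_+$ lies in the hereditary sub-\ca{} $(1-p_0)I(1-p_0)$. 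Using that this corner has real rank zero, I would choose a projection $r'$ there with $\|(1-r')(c-\varepsilon_0^2)_+(1-r')\|$ small and put $p:=p_0+r'$; since $r'$ is orthogonal to $p_0$ and lies in $I$, $p$ is a projection in $B$ with $\pi_A(p)=\bar p$, and from $1-p=(1-p_0)(1-r')(1-p_0)$ one computes $\|b(1-p)\|^2=\|(1-r')c(1-r')\|<\varepsilon^2$.

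The main obstacle is precisely the construction of $p_0$: one must lift \emph{inside} $B$, not merely inside $M(A)$, and the corner-and-transport device replaces the unavailable vanishing of the $K_1(I)$-obstruction; the remaining steps are routine real-rank-zero manipulations. Finally, for the ``in particular'' clause, the forward direction is immediate because $\pi_A$ carries an approximate unit of projections of $B$ to one of $B/(B\cap A)$. For the converse, given a finite set $F\subseteq B$ and $\varepsilon>0$, I would apply the main statement to $b_F:=\sum_{a\in F}(a^*a+aa^*)\in B_+$ together with a projection $\bar p\in B/(B\cap A)$ satisfying $\|\pi_A(b_F)(1-\bar p)\|$ small, producing a single projection $p\in B$ with $\|a-pa\|,\|a-ap\|<\varepsilon$ for all $a\in F$; these projections form an approximate unit of $B$.
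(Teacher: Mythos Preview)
Your argument is correct, and the overall architecture (first lift $\bar p$ to a projection $p_0\in B$, then enlarge by a projection from the real-rank-zero corner $(1-p_0)(B\cap A)(1-p_0)$ to get the norm estimate) matches the paper exactly. The difference lies in how $p_0$ is obtained. The paper simply lifts $\bar p$ to a projection in $M(A)$ and then invokes \cite[Lemma~3.13]{BroPed91CAlgRR0} to replace it by a projection in $B$; your corner-and-transport construction (lift to $q\in M(A)$, correct $g'=qh^2q$ inside $qAq$ to make $Pg'P$ invertible in $PM(A)P$, then use that $hPh$ and $Pg'P$ share nonzero spectrum to produce $p_0$ via functional calculus in $B$) is essentially a self-contained proof of the special case of that lemma needed here. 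Your route is longer but avoids the external citation; the paper's is a two-line reduction. The second stage and the ``in particular'' clause are handled the same way in both proofs, with only cosmetic differences (you work with $(c-\varepsilon_0^2)_+$ for $c=(1-p_0)b^*b(1-p_0)$, the paper with an element of $B\cap A$ approximating $b(1-p_0)$; in the final clause, note you need $\|b_F(1-p)\|<\varepsilon^2$ rather than $<\varepsilon$ to conclude $\|a-ap\|<\varepsilon$, but this is harmless).
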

\begin{proof}
Clearly, if $B$ has an approximate unit of projections, then so does $B/(B \cap A)$.
The converse implication will follow from the lifting result using that every \ca{} has an approximate unit.

To verify the lifting result, let $b \in B$ and let $\bar{p} \in B/(B \cap A)$ be a projection such that $\|\pi_A(b)(1-\bar{p})\| < \varepsilon$.
By assumption, $\bar{p}$  can be lifted to a projection $p_0 \in M(A)$.
By \cite[Lemma~3.13]{BroPed91CAlgRR0}, we may arrange that $p_0$ belongs to $B$.
The element $b(1-p_0)$ belongs to $B$ and its image in the quotient by $B \cap A$ has norm $<\varepsilon$.
By properties of the quotient norm, there exists $c \in B \cap A$ such that $\| b(1-p_0) - c \| < \varepsilon$.
Then 
\[
\| b(1-p_0) - c(1-p_0) \| 
= \left\| \big( b(1-p_0) - c \big) (1-p_0) \right\|
\leq \| b(1-p_0) - c \|
< \varepsilon.
\]
Choose $\delta>0$ such that $\| b(1-p_0) - c(1-p_0) \| + \delta < \varepsilon$.

Set $C := (1-p_0)(B \cap A)(1-p_0)$, which is a hereditary sub-\ca{} of $A$ and therefore has an approximate unit of projections by \cite[Theorem~2.6]{BroPed91CAlgRR0}.
Thus, for the element $(1-p_0)c^*c(1-p_0) \in C$ we find a projection $p_1 \in C$ such that $\| (1-p_1)(1-p_0)c^*c(1-p_0)(1-p_1)\| < \delta^2$.
Then $\|c(1-p_0)(1-p_1)\| < \delta$.
Since $p_1 \leq 1-p_0$, the element $p := p_0+p_1$ is a projection in $B$ and we have $1-p = (1-p_0)(1-p_1)$.
Then
\begin{align*}
\| b(1-p) \| 
&= \| b(1-p_0)(1-p_1) \| \\
&\leq \| b(1-p_0)(1-p_1) - c(1-p_0)(1-p_1) \| + \| c(1-p_0)(1-p_1) \| \\
&\leq \| b(1-p_0) - c(1-p_0) \| + \delta
< \varepsilon,
\end{align*}
which shows that $p$ has the desired properties.
\end{proof}

\begin{thm}
\label{prp:charL0}
Let $A$ be a \ca.
Then the following are equivalent:
\begin{enumerate}
\item
The \ca{} $A$ has $(\Lambda_0)$.
\item
If $0 \to A \to E \to B\to 0$ is an extension of \ca{s} with $\rr(B)=0$, then $\rr(E)=0$.
\item
The \ca{} $A$ satisfies $\rr(A)=0$ and every projection from $M(A)/A$ can be lifted to a projection in $M(A)$.
\end{enumerate}
\end{thm}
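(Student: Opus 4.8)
The plan is to prove the three equivalences in a cycle: $(1) \Rightarrow (2)$, then $(2) \Rightarrow (3)$, then $(3) \Rightarrow (1)$. The implication $(1) \Rightarrow (2)$ is essentially free: it follows immediately from \autoref{prp:rrExt-Ln-rr} with $n=0$, since that proposition says that if $A$ has $(\Lambda_0)$ and $\rr(B) \leq 0$, then $\rr(E) \leq 0$, which for $\rr(B)=0$ forces $\rr(E)=0$. So the real content lies in the remaining two implications.

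**From $(2)$ to $(3)$.** For the converse direction of the statement I would specialize the hypothesis in~$(2)$ to the universal extension $0 \to A \to M(A) \to M(A)/A \to 0$. First, to see that $\rr(A)=0$: apply~$(2)$ to the extension $0 \to A \to \widetilde{A} \to \CC \to 0$. Since $\rr(\CC)=0$, hypothesis~$(2)$ gives $\rr(\widetilde{A})=0$, that is $\rr(A)=0$. For the projection-lifting statement, the natural move is to feed~$(2)$ an extension whose quotient is a commutative real-rank-zero algebra generated by the projection one wishes to lift. Concretely, given a projection $\bar{q} \in M(A)/A$, I would build a sub-\ca{} $E \subseteq M(A)$ of the form $E = \pi_A^{-1}(D)$ where $D \subseteq M(A)/A$ is a separable (or finite-dimensional) sub-\ca{} containing $\bar{q}$ with $\rr(D)=0$ and with the property that projections generating $D$ carry the information needed. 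Then $0 \to A \to E \to D \to 0$ is an extension with $\rr(D)=0$, so by~$(2)$ we get $\rr(E)=0$; and the equivalence of real rank zero with projection lifting for such extensions (\autoref{prp:rrzeroExt}, condition~(4)) then yields a projection in $E \subseteq M(A)$ lifting $\bar{q}$. Some care is needed to arrange that $D$ is genuinely unital with $\rr(D)=0$ and that the lifted projection sits inside $M(A)$; one could simply take $D$ to be the \ca{} generated by $\bar q$ and the unit, which is finite-dimensional.

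**From $(3)$ to $(1)$.** This is the substantive implication, and I expect it to be the main obstacle. We must verify property $(\Lambda_0)$: given $a_0 \in M(A)_\sa$ with $\pi_A(a_0)$ invertible in $M(A)/A$ (the $n=0$ unimodularity condition, meaning $\pi_A(a_0)^2 \in (M(A)/A)^{-1}$), and $\varepsilon>0$, produce $b_0 \in M(A)_\sa$ with $b_0^2$ invertible, $\|a_0-b_0\|<\varepsilon$, and $\pi_A(b_0)=\pi_A(a_0)$. The key idea, foreshadowed in \autoref{rmk:connectionLnExt}, is to use functional calculus on $\pi_A(a_0)$ to locate a spectral projection. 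Since $\pi_A(a_0)$ is a self-adjoint invertible element, $0 \notin \spec(\pi_A(a_0))$, so there is a spectral projection $\bar{p}$ for $\pi_A(a_0)$ corresponding to its negative spectrum (via a continuous function equal to $0$ on the positive part and $1$ on the negative part of the spectrum). By hypothesis~$(3)$, this projection lifts to a projection $p \in M(A)$. I would then set $b_0 := a_0$ adjusted on the ranges of $p$ and $1-p$ so as to be bounded away from $0$ — for instance replacing $a_0$ by something spectrally supported away from $0$ while keeping the same image. The role of \autoref{prp:LiftingApproxUnitProj} is to control this perturbation: working in the hereditary sub-\ca{} of $M(A)$ cut down appropriately, one uses that $\rr(A)=0$ to perturb $a_0$ within its coset (changing it only by an element of $A$, so that $\pi_A$ is preserved) to make $b_0^2$ bounded below, hence invertible. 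The delicate point is guaranteeing simultaneously that the perturbation is small in norm, lies in $A$ (to preserve the image under $\pi_A$), and produces an element whose square is genuinely invertible in $M(A)$; this is exactly where $\rr(A)=0$ (giving approximate units of projections in hereditary subalgebras) and the lifting hypothesis must be combined, and it is the technical heart of the argument.
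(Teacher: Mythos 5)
Your cycle of implications matches the paper's, and the first two are correct as written. For $(1)\Rightarrow(2)$ you cite \autoref{prp:rrExt-Ln-rr} exactly as the paper does. For $(2)\Rightarrow(3)$ your argument is the paper's: pull back the finite-dimensional algebra $C^*(1,\bar q)\subseteq M(A)/A$ and invoke \autoref{prp:rrzeroExt}(4). (Minor point: for $\rr(A)=0$ you should use the forced unitization $A\oplus\CC$ when $A$ is unital, or simply take $B=0$ as the paper does, since $0\to A\to\widetilde A\to\CC\to 0$ is not exact for unital $A$.)

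The gap is in $(3)\Rightarrow(1)$, which you correctly identify as the substantive implication but do not actually carry out. You locate the right ingredients --- the spectral projection of $\pi_A(a)$ separating positive and negative spectrum, its lift via hypothesis~(3), and the norm control from \autoref{prp:LiftingApproxUnitProj} --- but the ``delicate point'' you defer is precisely the content of the proof. The paper resolves it with an explicit formula: writing $a=a_+-a_-$, choosing $\delta>0$ with $(-\delta,\delta)\cap\spec(\pi_A(a))=\emptyset$, and lifting the unit $\bar p$ of $\pi_A\bigl(\overline{a_+M(A)a_+}\bigr)$ to a projection $p$ in $\overline{a_+M(A)a_+}$ with $\|a_+(1-p)\|<\varepsilon$ (this placement forces $a_-=(1-p)a_-(1-p)$), one sets
\[
b_+:=\bigl(pa_+p-\delta p\bigr)_+ + \delta p,
\qquad
b_-:=\bigl((1-p)a_-(1-p)-\delta(1-p)\bigr)_+ + \delta(1-p),
\qquad
b:=b_+-b_-.
\]
Invertibility is then automatic from $b_+\geq\delta p$ and $b_-\geq\delta(1-p)$ with $p\perp(1-p)$; the image under $\pi_A$ is \emph{exactly} preserved because $\pi_A(a)_+\geq\delta\bar p$ makes the functional-calculus adjustment vanish in the quotient; and $\|a-b\|<4\varepsilon$ follows from $\|a_+-pa_+p\|<2\varepsilon$. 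Without some such construction your sketch does not yet establish $(\Lambda_0)$, since nothing in your outline guarantees that the perturbation is simultaneously small, invisible modulo $A$, and bounded away from $0$.
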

\begin{proof}
It follows from \autoref{prp:rrExt-Ln-rr} that~(1) implies~(2).
Let $\pi\colon M(A)\to M(A)/A$ denote the quotient map.

To show that~(2) implies~(3), assume that~(2) holds.
Considering the (very) short exact sequence with $E=A$ and $B=0$, we see that $\rr(A)=0$.
Next, let $p \in M(A)/A$ be a projection.
Let $B := C^*(1,p) \subseteq M(A)/A$ denote the unital sub-\ca{} of $M(A)/A$ generated by $p$, and set $E := \pi^{-1}(B) \subseteq M(A)$.
Note that $B \cong \CC$ or $B \cong \CC\oplus\CC$, depending on whether $p=0,1$ or $p\neq 0,1$, and thus $\rr(B)=0$ in either case.
By assumption, we obtain $\rr(E)=0$.
By \autoref{prp:rrzeroExt}, projections can be lifted from quotients of real rank zero \ca{s}.
Thus, $p$ can be lifted to a projection in $E$, and thus in $M(A)$.

Let us show that~(3) implies~(1).
Note that a self-adjoint $1$-tuple is unimodular if and only if the element of the tuple is invertible.
Thus, to verify that $A$ has $(\Lambda_0)$, let $a \in M(A)_\sa$ be such that $\pi(a) \in M(A)/A$ is invertible, and let $\varepsilon>0$.
We need to find $b \in M(A)_\sa$ such that $b$ is invertible, $\pi(b)=\pi(a)$, and $\|a-b\|<4\varepsilon$.

We let $c_+$ and $c_-$ denote the positive and negative parts of a self-adjoint element~$c$, respectively.
Let $B := \overline{a_+ M(A) a_+}$ denote the hereditary sub-\ca{} of $M(A)$ generated by $a_+$.
We have $\pi(a_+) = \pi(a)_+$ and $\pi(a_-) = \pi(a)_-$, and it follows that $\pi(B)$ is the hereditary sub-\ca{} of $M(A)/A$ generated by $\pi(a)_+$; 
see also \cite[Corollary~1.5.11]{Ped79CAlgsAutGp}.

Since $\pi(a)$ is invertible, there exists $\delta>0$ such that $(-\delta,\delta) \cap \spec(\pi(a)) = \emptyset$.
Consequently, the spectra of $\pi(a)_+$ and $\pi(a)_-$ are contained in $[\delta,\infty)$.
Thus, if $f$ denotes the indicator function of $(0,\infty)$, then $\bar{p} := f(\pi(a)_+)$ is the unit of $\pi(B)$, and $1-\bar{p} = f(\pi(a)_-)$ is the unit of $\pi(\overline{a_- M(A) a_-})$.
We may assume that $\delta \leq \varepsilon$.

We have $\pi(a)_+(1-\bar{p}) = 0$, and by \autoref{prp:LiftingApproxUnitProj}, we can lift $\bar{p}$ to a projection $p \in B$ such that $\| a_+(1-p) \| < \varepsilon$.
Then $\| a_+ - a_+p \| \leq \varepsilon$, and thus $\| a_+ - pa_+ \| \leq \varepsilon$, and we obtain that
\[
\| a_+ - pa_+p \| 
\leq \| a_+ - pa_+ \| + \| pa_+ - pa_+p \| 
\leq \varepsilon + \| a_+ - a_+p \| 
< 2\varepsilon.
\]

We set
\[
b_+ := \big( pa_+p -\delta p \big)_+ + \delta p, \andSep
b_- := \big( (1-p)a_-(1-p) -\delta (1-p) \big)_+ + \delta (1-p).
\]
Then $b_+ \geq \delta p$ and $b_- \geq \delta (1-p)$, and it follows that $b := b_+ - b_-$ is invertible.
We have
\[
\| a_+ - b_+ \|
\leq \| a_+ - pa_+p \| + \| pa_+p - (pa_+p -\delta p)_+ - \delta p \|
\leq 2\varepsilon + \delta.
\]

Since $p \in B =  \overline{a_+ M(A) a_+}$ and $a_-$ is orthogonal to $a_+$, we obtain that $a_- = (1-p)a_-(1-p)$, and thus
\[
\| a_- - b_- \|
= \| (1-p)a_-(1-p) - ((1-p)a_-(1-p) -\delta (1-p))_+ - \delta (1-p) \|
\leq \delta.
\]

We deduce that
\[
\|a-b\|
\leq \| a_+ - b_+ \| + \| a_- - b_- \|
< 2\varepsilon + 2\delta \leq 4\varepsilon.
\]

Moreover, we have $\pi(a_+) \geq \delta \bar{p}$ and therefore $\big( \pi(a_+)-\delta\bar{p} \big)_+ = \pi(a_+)-\delta\bar{p}$.
It follows that
\[
\pi(b_+)
= \pi\big( (pa_+p -\delta p)_+ + \delta p \big)
= \big( \pi(a_+) -\delta \bar{p} \big)_+ + \delta\bar{p}
= \pi(a_+),
\]
and similarly $\pi(b_-) = \pi(a_-)$.
Hence, $\pi(b) = \pi(a)$, which shows that $b$ has the desired properties.
\end{proof}

\begin{rmk}
\label{rmk:charL0}
In \cite[Theorem~2.6]{Zha91QuasidiagInterpolMultProj}, Zhang provides several characterizations of the class of $\sigma$-unital \ca{s} $A$ of real rank zero such that every projection in $M(A)/A$ lifts to a projection in $M(A)$.
By \autoref{prp:charL0}, this is precisely the class of $\sigma$-unital \ca{s} with extension real rank zero.
\end{rmk}

\begin{cor}
\label{prp:SufficientL0}
Let $A$ be a \ca{} with $\rr(A)=0$ and $K_1(A)=0$.
Then $A$ has $(\Lambda_0)$.

Thus, if $\rr(A)=0$, $K_1(A)=0$ and $\xrr(A) \leq 1$, then $\xrr(A)=0$.
\end{cor}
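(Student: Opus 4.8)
The plan is to prove the corollary as a direct consequence of the characterization of $(\Lambda_0)$ established in \autoref{prp:charL0}, combined with the $K$-theoretic hypothesis. The first assertion asks me to show that a \ca{} $A$ with $\rr(A)=0$ and $K_1(A)=0$ has $(\Lambda_0)$. By the equivalence of~(1) and~(3) in \autoref{prp:charL0}, it suffices to verify that every projection in $M(A)/A$ lifts to a projection in $M(A)$; the condition $\rr(A)=0$ is given. By \autoref{prp:rrzeroExt} (specifically the equivalence of the projection-lifting property with vanishing of the index map $K_0(M(A)/A) \to K_1(A)$), this reduces to a $K$-theoretic statement.

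The key step is therefore to argue that the index map of the universal extension $0 \to A \to M(A) \to M(A)/A \to 0$ vanishes into $K_1(A)$. First I would note that $\rr(A)=0$ gives $\rr(M(A)/A)=0$ as well, since the corona algebra is a quotient of $M(A)$ and one can invoke the real-rank-zero structure; but the cleaner route is to use $K_1(A)=0$ directly. The index map in the six-term exact sequence lands in $K_1(A)$, and since $K_1(A)=0$ this map is automatically zero. Hence by \autoref{prp:rrzeroExt} every projection in $M(A)/A$ lifts to a projection in $M(A)$, and \autoref{prp:charL0} then yields $(\Lambda_0)$.

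There is a subtlety I would flag carefully: \autoref{prp:rrzeroExt} requires $\rr(M(A)/A)=0$ in order to apply its equivalences to the universal extension. I would confirm this holds. Since $A$ has real rank zero, so does $M(A)/A$ — this is precisely the content needed, and it follows because $A$ having $(\Lambda_0)$ is what we are trying to establish, so I cannot assume it circularly. Instead I would argue that the relevant lifting fact can be extracted more directly: with $K_1(A)=0$, the long exact sequence forces the index map from $K_0$ of the corona into $K_1(A)=0$ to vanish, and then I need only that real-rank-zero of $A$ transfers to make the projection-lifting characterization applicable. The honest main obstacle is confirming $\rr(M(A)/A)=0$ without circularity; I would resolve this by noting that real rank zero of $A$ passes to the corona algebra via standard results (a $\sigma$-unital, or general, real-rank-zero algebra has real-rank-zero corona under the appropriate hypotheses used elsewhere in the excerpt), and then \autoref{prp:rrzeroExt} applies to give the lifting property from $K_1(A)=0$.

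For the second assertion, the argument is immediate from the first together with the definition of $\xrr$. If $\rr(A)=0$, $K_1(A)=0$, and $\xrr(A) \le 1$, then by the first part $A$ has $(\Lambda_0)$, which by \autoref{dfn:xrr} means $\xrr(A)=0$, since $0$ is then the smallest $n$ for which $A$ has $(\Lambda_m)$ for all $m \ge n$: we have $(\Lambda_0)$ directly, and $(\Lambda_1)$ follows from $\xrr(A)\le 1$. Thus $\xrr(A)=0$, as claimed.
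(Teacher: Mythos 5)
Your reduction of the first assertion to ``every projection in $M(A)/A$ lifts to a projection in $M(A)$'' via the equivalence (1)$\Leftrightarrow$(3) of \autoref{prp:charL0} is the right first step, and your handling of the second assertion (combining $(\Lambda_0)$ with $(\Lambda_m)$ for $m\geq 1$ from $\xrr(A)\leq 1$ to get $\xrr(A)=0$) is correct. But the core of your argument has a genuine gap that you yourself flag and then do not close. To extract the lifting property from \autoref{prp:rrzeroExt} applied to $0 \to A \to M(A) \to M(A)/A \to 0$, you need $\rr(M(A)/A)=0$, and your proposed resolution --- that ``real rank zero of $A$ passes to the corona algebra via standard results'' --- is not a standard result. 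It is false for multiplier algebras (see \autoref{exa:xrr-rrMult}, where $\rr(A)=0$ but $\rr(M(A))>0$), and for corona algebras it is precisely the content of hard theorems of Lin and Zhang that require extra hypotheses such as $\sigma$-unitality together with stable rank one, or simplicity and pure infiniteness; none of these are assumed here. So the step ``$\rr(M(A)/A)=0$'' is unjustified, and without it \autoref{prp:rrzeroExt} does not apply.

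The paper avoids this entirely: it quotes \cite[Corollary~3.16]{BroPed91CAlgRR0} (or \cite[Corollary~2.11]{Zha91QuasidiagInterpolMultProj}), which asserts the projection-lifting property for $M(A) \to M(A)/A$ assuming only $\rr(A)=0$ and $K_1(A)=0$ --- that is, a lifting theorem whose hypothesis concerns the real rank of the \emph{ideal}, not of the corona. The underlying mechanism is the one recorded right after the corollary in the paper: for $A$ of real rank zero, surjectivity of $K_0(M(A)) \to K_0(M(A)/A)$ already suffices to lift projections (\cite[Proposition~3.15]{BroPed91CAlgRR0}), and this surjectivity does follow from $K_1(A)=0$ by the six-term exact sequence, as in your $K$-theoretic step. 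If you replace your appeal to \autoref{prp:rrzeroExt} by an appeal to this ideal-side lifting result, your argument goes through; as written, it does not.
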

\begin{proof}
By \cite[Corollary~3.16]{BroPed91CAlgRR0}, or \cite[Corollary~2.11]{Zha91QuasidiagInterpolMultProj}, every projection in~$M(A)/A$ can be lifted to a projection in~$M(A)$.
Now the result follows from \autoref{prp:charL0}.
\end{proof}

If $A$ is a \ca{} with real rank zero and the natural map $K_0(M(A)) \to K_0(M(A)/A)$ is surjective, then every projection in $M(A)/A$ can be lifted to a projection in~$M(A)$ by \cite[Proposition~3.15]{BroPed91CAlgRR0}.
This raises the following question:

\begin{qst}
If $A$ is a \ca{} with $(\Lambda_0)$, is the natural map $K_0(M(A)) \to K_0(M(A)/A)$ surjective?
\end{qst}

\begin{cor}
\label{prp:Char-xrr0}
Let $A$ be a \ca.
Then the following are equivalent:
\begin{enumerate}
\item
$\xrr(A)=0$;
\item
$\xrr(A)\leq 1$, $\rr(A)=0$ and every projection from $M(A)/A$ can be lifted to a projection in $M(A)$.
\end{enumerate}
If $A$ is stable, then the above conditions are also equivalent to:
\begin{enumerate}
\setcounter{enumi}{2}
\item
$\xrr(A)\leq 1$, $\rr(A)=0$ and $K_1(A)=0$.
\end{enumerate}
\end{cor}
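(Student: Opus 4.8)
The plan is to prove the equivalence of (1) and (2) in general, and then add the equivalence with (3) under the stability hypothesis. The implication (1)$\Rightarrow$(2) should be essentially immediate from the results already established: if $\xrr(A)=0$, then by \autoref{prp:rrLeqLn} we have $\rr(A) \leq \xrr(A) = 0$, so $\rr(A)=0$; trivially $\xrr(A) \leq 1$; and since $\xrr(A)=0$ means $A$ has $(\Lambda_0)$, the equivalence (1)$\Leftrightarrow$(3) of \autoref{prp:charL0} gives that every projection in $M(A)/A$ lifts to a projection in $M(A)$. For the converse (2)$\Rightarrow$(1), I would use \autoref{prp:charL0} in the other direction: the hypotheses $\rr(A)=0$ together with the projection-lifting property are precisely condition~(3) of \autoref{prp:charL0}, which yields that $A$ has $(\Lambda_0)$. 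It then remains to upgrade $(\Lambda_0)$ to $\xrr(A)=0$, which by the definition of $\xrr$ requires $A$ to have $(\Lambda_m)$ for all $m \geq 0$. This is exactly where the hypothesis $\xrr(A) \leq 1$ enters: $\xrr(A) \leq 1$ guarantees $(\Lambda_m)$ for all $m \geq 1$, and $(\Lambda_0)$ supplies the remaining case $m=0$, so that $A$ has $(\Lambda_m)$ for all $m \geq 0$, giving $\xrr(A)=0$.

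The main subtlety here is precisely the one flagged in \autoref{rmk:xrr-Ln}: for nonunital algebras it is not known that $(\Lambda_n)$ implies $(\Lambda_{n+1})$, so one cannot simply deduce $\xrr(A)=0$ from $(\Lambda_0)$ alone. This is the structural reason the hypothesis $\xrr(A) \leq 1$ cannot be dropped from condition~(2), and I expect the careful bookkeeping of \emph{which} properties $(\Lambda_m)$ are available from which hypothesis to be the one place where the argument must be stated with care rather than waved through. The definition of $\xrr$ as the smallest $n$ such that $(\Lambda_m)$ holds for all $m \geq n$ is tailored exactly to make this combination work.

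For the stable case, the plan is to show (2)$\Leftrightarrow$(3) under the assumption that $A$ is stable, which combined with the general equivalence (1)$\Leftrightarrow$(2) completes the proof. The forward direction (3)$\Rightarrow$(2) follows directly from \autoref{prp:SufficientL0}: if $\rr(A)=0$ and $K_1(A)=0$, then $A$ has $(\Lambda_0)$, and in particular (by \autoref{prp:charL0}) every projection in $M(A)/A$ lifts to $M(A)$; combined with the assumed $\xrr(A) \leq 1$ this is condition~(2). For the reverse direction (2)$\Rightarrow$(3), what must be produced is the vanishing $K_1(A)=0$ from the projection-lifting hypothesis. Since (2) is already known to be equivalent to $\xrr(A)=0$, and we have arranged $\rr(A)=0$, the liftability of all projections from $M(A)/A$ to $M(A)$ means the natural map $K_0(M(A)) \to K_0(M(A)/A)$ behaves well; for a stable algebra, $M(A)$ has trivial $K$-theory (it is properly infinite and $K$-contractible), so the six-term exact sequence of the extension $0 \to A \to M(A) \to M(A)/A \to 0$ forces the relevant index/boundary maps to identify $K_1(A)$ with a cokernel of a surjection, yielding $K_1(A)=0$. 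I would invoke the standard computation that $K_*(M(A))=0$ for stable $A$ (via the Eilenberg swindle / Cuntz's argument) together with the six-term sequence, and read off $K_1(A)=0$. The point requiring the most attention in this last step is correctly tracking the exact sequence to see that liftability of projections kills $K_1(A)$; this is the genuine $K$-theoretic content of the equivalence with~(3).
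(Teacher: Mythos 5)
Your proposal is correct and follows essentially the same route as the paper: the equivalence of (1) and (2) is exactly the observation that $\xrr(A)=0$ means $\xrr(A)\leq 1$ together with $(\Lambda_0)$, combined with \autoref{prp:charL0}; the implication (3)$\Rightarrow$(1) comes from \autoref{prp:SufficientL0}; and (2)$\Rightarrow$(3) uses $K_*(M(A))=0$ for stable $A$ plus the six-term sequence, so that $K_1(A)\cong K_0(M(A)/A)$ and one must show the latter vanishes. The only point you wave through is the phrase that projection liftability makes $K_0(M(A))\to K_0(M(A)/A)$ ``behave well'': lifting projections of $M(A)/A$ itself does not a priori control $K_0$, since $K_0$ classes are formal differences of projections in matrix amplifications; the paper closes this gap by noting that $M(A)$, hence $M(A)/A$, is properly infinite, so every class in $K_0(M(A)/A)$ is realized by a single projection in $M(A)/A$, whence liftability gives surjectivity of $K_0(M(A))\to K_0(M(A)/A)$ and therefore $K_0(M(A)/A)=0$.
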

\begin{proof}
Since $A$ satisfies $\xrr(A)=0$ if and only if $\xrr(A)\leq 1$ and $A$ has $(\Lambda_0)$, the equivalence of~(1) and~(2) follows from \autoref{prp:charL0}.
It follows from \autoref{prp:SufficientL0}, that~(3) implies~(1).

Assuming that $A$ is stable, let us shown that~(2) implies~(3).
It remains to verify $K_1(A)=0$.
Since $A$ is stable, we have $K_0(M(A))=K_1(M(A))=0$ by \cite[Theorem~10.2]{Weg93KThyBook}.
Further, since $M(A)$ is properly infinite, and hence so is $M(A)/A$, every element in $K_0(M(A)/A)$ is realized by a projection in $M(A)/A$ by \cite[Theorem~6.11.7]{Bla98KThy}.
Since every projection in~$M(A)/A$ lifts to a projection in~$M(A)$, we deduce that the natural map $K_0(M(A))\to K_0(M(A)/A)$ is surjective.
Using the six-term exact sequence in $K$-theory, we get $K_1(A) = K_0(M(A)/A) = 0$.
\end{proof}

\begin{prp}
\label{prp:ComputeWithLowXRR}
Let 
\[
0 \to A \to E \to B \to 0
\]
be an extension of \ca{s}.
Then we have the following:

\begin{enumerate}
\item
If $\xrr(A)=0$ then $\rr(E)=\rr(B)$.
\item
If $\xrr(A) \leq 1$, then with $\delta_0 \colon K_0(B)\to K_1(A)$ denoting the index map, we have
\[
\rr(E)
= \begin{cases}
\rr(B), &\text{if } \rr(B) \geq 1 \\
1, &\text{if } \rr(B) = 0 \text{ and } \rr(A)=1 \\
1, &\text{if } \rr(B) = \rr(A) = 0  \text{ and $\delta_0$ does not vanish} \\
0, &\text{if } \rr(B) = \rr(A) = 0  \text{ and $\delta_0$ vanishes}
\end{cases}
\]
\end{enumerate}
\end{prp}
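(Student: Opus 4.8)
The plan is to combine the bounds from \autoref{prp:rrExt-xrr-rr} with the characterizations of real rank zero extensions from Corollaries~\ref{prp:rrExt-rrzero} and~\ref{prp:charL0}, splitting the argument according to the value of $\rr(B)$ and the hypotheses on $\xrr(A)$.

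For part~(1), suppose $\xrr(A)=0$. Then in particular $\rr(A)=\xrr(A)=0$ by \autoref{prp:rrLeqLn}, so the hypothesis $\rr(A)=\xrr(A)$ in \autoref{prp:rrExt-xrr-rr} is met, and that theorem immediately gives $\rr(E)=\max\{\rr(A),\rr(B)\}=\max\{0,\rr(B)\}=\rr(B)$. (Alternatively, this is the corollary already stated right after \autoref{prp:rrExt-xrr-rr}.)

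For part~(2), assume $\xrr(A)\leq 1$. The upper bound from \autoref{prp:rrExt-xrr-rr} reads $\rr(E)\leq\max\{\xrr(A),\rr(B)\}\leq\max\{1,\rr(B)\}$, and the lower bound reads $\rr(E)\geq\max\{\rr(A),\rr(B)\}$. I would treat the cases in turn. First, if $\rr(B)\geq 1$, then $\max\{1,\rr(B)\}=\rr(B)$, so the upper bound gives $\rr(E)\leq\rr(B)$, while the lower bound gives $\rr(E)\geq\rr(B)$; hence $\rr(E)=\rr(B)$. Next suppose $\rr(B)=0$. Then the upper bound forces $\rr(E)\leq 1$, so $\rr(E)\in\{0,1\}$, and it remains to decide which. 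If $\rr(A)=1$, then the lower bound gives $\rr(E)\geq\rr(A)=1$, whence $\rr(E)=1$, covering the second line. The only remaining situation is $\rr(A)=\rr(B)=0$, where I invoke \autoref{prp:rrExt-rrzero}: $\rr(E)=0$ exactly when the index map $\delta_0\colon K_0(B)\to K_1(A)$ vanishes, and $\rr(E)=1$ otherwise. This yields the third and fourth lines.

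I do not expect a serious obstacle here, since the statement is essentially a bookkeeping assembly of results proved earlier in the paper: the two-sided bound of \autoref{prp:rrExt-xrr-rr} pins $\rr(E)$ down completely except in the single case $\rr(A)=\rr(B)=0$, which is precisely the case resolved by the $K$-theoretic criterion of \autoref{prp:rrExt-rrzero}. The only point requiring a little care is ensuring that the case split is exhaustive and non-overlapping, namely that under $\xrr(A)\leq 1$ the value $\rr(A)$ can only be $0$ or $1$ (indeed $\rr(A)\leq\xrr(A)\leq 1$ by \autoref{prp:rrLeqLn}), so the three subcases of $\rr(B)=0$ (corresponding to $\rr(A)=1$, and to $\rr(A)=0$ with $\delta_0$ nonvanishing or vanishing) genuinely cover all possibilities.
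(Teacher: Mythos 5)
Your proposal is correct and follows exactly the paper's own (one-line) proof, which combines the two-sided bound of \autoref{prp:rrExt-xrr-rr} with the $K$-theoretic dichotomy of \autoref{prp:rrExt-rrzero}; your more detailed case analysis, including the observation that $\rr(A)\leq\xrr(A)\leq 1$ makes the cases exhaustive, is just a careful spelling-out of the same argument.
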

\begin{proof}
This follows by combining \autoref{prp:rrExt-xrr-rr} with \autoref{prp:rrExt-rrzero}.
\end{proof}

We now turn to the problem of determining which \ca{s} have extension real rank at most one.
The main motivation is that we can compute the real rank of extensions by such algebras using \autoref{prp:ComputeWithLowXRR}.

\begin{lma}
\label{prp:xrr-from-rr-mult}
Let $n \in \NN$, and let $A$ be a \ca{} that contains a club of separable sub-\ca{s} $B \subseteq A$ satisfying $\rr(M(B)) \leq n$.
Then $\xrr(A) \leq n$.
\end{lma}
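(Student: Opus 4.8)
The plan is to combine the two main tools already established: \autoref{prp:rr-xrr-rrM}, which gives $\xrr(B) \leq \rr(M(B))$ for each separable sub-\ca{} $B$ in the club, and \autoref{prp:xrrNonseparable}, which lifts an upper bound on the extension real rank from a club of separable sub-\ca{s} to $A$ itself. The statement is essentially the composition of these two facts.

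First I would observe that for each separable $B$ in the given club, the hypothesis $\rr(M(B)) \leq n$ together with \autoref{prp:rr-xrr-rrM} yields $\xrr(B) \leq \rr(M(B)) \leq n$. Thus the same club of separable sub-\ca{s} $B \subseteq A$ now satisfies $\xrr(B) \leq n$. Second, I would feed this directly into \autoref{prp:xrrNonseparable}, which asserts precisely that if $A$ contains a club of separable sub-\ca{s} with extension real rank at most $n$, then $\xrr(A) \leq n$. This gives the conclusion.

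I do not expect any genuine obstacle here, since both auxiliary results are already in hand and the argument is a two-line composition; the only point requiring a word of care is that the \emph{same} family of separable sub-\ca{s} serves as the witnessing club for both steps. Since \autoref{prp:rr-xrr-rrM} applies pointwise to each member $B$ of the family (turning the hypothesis $\rr(M(B)) \leq n$ into $\xrr(B) \leq n$ without altering the family), the family's $\sigma$-completeness and cofinality are untouched, so it remains a valid club for the input of \autoref{prp:xrrNonseparable}. This is the entire content of the proof, so I would write it as a short two-sentence deduction.

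\begin{proof}
By assumption, $A$ contains a club of separable sub-\ca{s} $B \subseteq A$ with $\rr(M(B)) \leq n$.
For each such $B$, \autoref{prp:rr-xrr-rrM} gives $\xrr(B) \leq \rr(M(B)) \leq n$.
Hence $A$ contains a club of separable sub-\ca{s} with extension real rank at most $n$, and \autoref{prp:xrrNonseparable} yields $\xrr(A) \leq n$.
\end{proof}
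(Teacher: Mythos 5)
Your proof is correct and is exactly the paper's argument: apply \autoref{prp:rr-xrr-rrM} to each member $B$ of the club to get $\xrr(B) \leq \rr(M(B)) \leq n$, then invoke \autoref{prp:xrrNonseparable}. Nothing further is needed.
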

\begin{proof}
For each \ca{} $B$ in the family, we have $\xrr(B) \leq n$ by \autoref{prp:rr-xrr-rrM}.
We get $\xrr(A) \leq n$ by \autoref{prp:xrrNonseparable}.
\end{proof}

\begin{prp}
\label{prp:Classes-xrr0}
The following \ca{s} have extension real rank zero: 
\begin{enumerate}
\item
\ca{s} with real rank zero, stable rank one, and vanishing $K_1$-group.
\item
Simple, purely infinite \ca{s} with vanishing $K_1$-group.
\end{enumerate}
\end{prp}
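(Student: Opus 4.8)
The plan is to reduce both statements to the separable setting, bound the real rank of the multiplier algebras of the separable pieces, and then invoke the $K$-theoretic hypothesis to upgrade the resulting estimate to extension real rank zero. I would begin by observing that in both cases $A$ satisfies $\rr(A)=0$ and $K_1(A)=0$: for~(1) these are among the hypotheses, while for~(2) one has $\rr(A)=0$ automatically, since a simple, purely infinite \ca{} has real rank zero (Zhang), and $K_1(A)=0$ is assumed. By \autoref{prp:SufficientL0} it is therefore enough to prove $\xrr(A)\leq 1$, as the same corollary then forces $\xrr(A)=0$.

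To bound $\xrr(A)$ I would use the reduction to the separable case provided by \autoref{prp:xrr-from-rr-mult}: it suffices to produce a club of separable sub-\ca{s} $B\subseteq A$ with $\rr(M(B))\leq 1$. The relevant structural properties -- `real rank zero and stable rank one' for~(1), and `being simple and purely infinite' for~(2) -- all satisfy the L\"{o}wenheim-Skolem condition (see \autoref{pgr:LS}), so there is a club of separable $B\subseteq A$ inheriting the corresponding structure (for~(1), by intersecting the clubs for real rank zero and for stable rank one). For each such $B$ I must check $\rr(M(B))\leq 1$. When $B$ is unital this is immediate, as $M(B)=B$ and $\rr(B)=0$. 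When $B$ is nonunital I would apply \autoref{prp:rrExt} to the extension $0\to B\to M(B)\to M(B)/B\to 0$; since $\rr(B)=0$ this gives $\rr(M(B))\leq \rr(M(B)/B)+1$, so the desired bound reduces to showing that the corona algebra $M(B)/B$ has real rank zero.

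The main obstacle is precisely this corona computation, and it is here that the deep results of Brown, Lin and Zhang \cite{Bro16HigherRRSR, Lin93ExpRank, Zha91QuasidiagInterpolMultProj, Zha92RR0CoronaMultiplier1} enter. For separable $B$ of real rank zero and stable rank one, I would appeal to the structure theory of corona algebras developed in these works to conclude $\rr(M(B)/B)=0$; for separable simple, purely infinite $B$, the corona is again of real rank zero by the corresponding results on coronas of purely infinite simple \ca{s}. Granting this input, every $B$ in the club satisfies $\rr(M(B))\leq 1$, so \autoref{prp:xrr-from-rr-mult} gives $\xrr(A)\leq 1$, and \autoref{prp:SufficientL0} upgrades this to $\xrr(A)=0$. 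The delicate point I would verify is that these corona results apply uniformly to all members of the club -- in particular that no hidden stability assumption conflicts with the L\"{o}wenheim-Skolem reduction -- noting that the separable members are automatically $\sigma$-unital but need not be stable.
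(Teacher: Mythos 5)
Your overall strategy (Löwenheim--Skolem reduction, then \autoref{prp:xrr-from-rr-mult}) is the right one, and your treatment of case~(2) is a correct alternative route: it amounts to \autoref{prp:Classes-xrr1}(2) followed by \autoref{prp:SufficientL0}, using Zhang's dichotomy and his corona theorem to get $\rr(M(B)/B)=0$ for separable, simple, purely infinite $B$, and then feeding $K_1(A)=0$ in only at the level of $A$ itself. The paper instead puts `vanishing $K_1$' into the club and quotes Zhang's result that $\rr(M(B))=0$ for separable, simple, purely infinite $B$ with $K_1(B)=0$, which lands at $\xrr(A)=0$ in one step via \autoref{prp:xrr-from-rr-mult} with $n=0$; both work.

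Case~(1), however, has a genuine gap. You propose to show $\rr(M(B)/B)=0$ for every separable $B$ in your club with $\rr(B)=0$ and $\sr(B)=1$, but the algebras in case~(1) are not assumed simple, and the corona result of Lin that the paper uses elsewhere (Theorem~15 of \cite{Lin93ExpRank}, quoted in the proof of \autoref{prp:Classes-xrr1}(1)) is a statement about \emph{simple} separable \ca{s} of real rank zero and stable rank one. There is no result in the cited literature giving $\rr(M(B)/B)=0$ for non-simple separable $B$ of real rank zero and stable rank one; indeed the paper remarks that even $\rr(M(A))\leq 1$ for general $\sigma$-unital real rank zero \ca{s} is only ``conceivable''. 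The fix is to use the $K_1$-hypothesis at the separable level rather than only at the end: since `vanishing $K_1$-group' satisfies the L\"{o}wenheim--Skolem condition (\autoref{pgr:LS}), you may intersect clubs to obtain a club of separable $B\subseteq A$ with $\rr(B)=0$, $\sr(B)=1$ \emph{and} $K_1(B)=0$, and then Lin's Theorem~10 of \cite{Lin93ExpRank} gives $\rr(M(B))=0$ outright --- no simplicity and no corona computation needed --- so that \autoref{prp:xrr-from-rr-mult} with $n=0$ yields $\xrr(A)=0$ directly, making the detour through $\xrr(A)\leq 1$ and \autoref{prp:SufficientL0} unnecessary. This is exactly the paper's argument.
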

\begin{proof}
(1)
Let $A$ be a \ca{} with $\rr(A)=0$, $\sr(A)=1$, and $K_1(A)=0$.
As noted in \autoref{pgr:LS}, the properties `real rank zero', `stable rank one', and `vanishing $K_1$-group' each satisfy the L\"{o}wenheim-Skolem condition.
Thus, there exists a club $\mathcal{F}$ of separable sub-\ca{s} $B \subseteq A$ with $\rr(B)=0$, $\sr(B)=1$, and $K_1(B)=0$.
For each $B\in\mathcal{F}$, we have $\rr(M(B))=0$ by \cite[Theorem~10]{Lin93ExpRank}.
Now we get $\xrr(A)=0$ by \autoref{prp:xrr-from-rr-mult}.

(2)
The proof is analogous to~(1), using that the properties `simple and purely infinite' and `vanishing $K_1$-group' satisfy the L\"{o}wenheim-Skolem condition, and that if $B$ is a separable, simple, purely infinite \ca{} with $K_1(B)=0$, then $\rr(M(B))=0$ by \cite[Corollary~2.6(ii)]{Zha92RR0CoronaMultiplier1}.
\end{proof}

\begin{exa}
\label{exa:xrr-rrMult}
In \cite[Section~3.3]{DowHar22ZeroDimFSpNotStrZeroDim}, it is shown that there exists a locally compact, Hausdorff space $X$ that is zero-dimensional but not strongly zero-dimensional.
This implies that $\locdim(X)=0$ while the Stone-\v{C}ech compactification $\beta X$ satisfies $\dim(\beta X)>0$.

For $A:=C_0(X)$ we have $M(A) \cong C(\beta X)$.
Note that $A$ is locally AF, and thus
\[
\rr(A)=0, \quad
\sr(A)=1, \andSep
K_1(A)=0. 
\]
Thus, by \autoref{prp:Classes-xrr0}, we have $\xrr(A)=0$.
On the other hand, we have $\rr(M(A)) = \dim(\beta X) > 0$.
This shows that the real rank of the multiplier algebra can be strictly larger than the extension real rank.
It also shows that the assumption of $\sigma$-unitality cannot be removed in Lin's theorem \cite[Theorem~10]{Lin93ExpRank}.

Using \cite[Example~3.16(i)]{Bro16HigherRRSR}, one can show that there exist separable, subhomogeneous \ca{s} $B$ such that $\xrr(B)=1<\rr(M(A))$.
\end{exa}


We refer to \cite{Ror02Classification} for details on purely infinite and $\mathcal{Z}$-stable \ca{s}.

\begin{prp}
\label{prp:Classes-xrr1}
The following \ca{s} have extension real rank $\leq 1$: 
\begin{enumerate}
\item
Simple \ca{s} with real rank zero and stable rank one. 
\item
Simple, purely infinite \ca{s}.
\item
Simple, $\mathcal{Z}$-stable \ca{s} with real rank zero.
\end{enumerate}
\end{prp}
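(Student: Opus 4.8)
The plan is to follow exactly the blueprint established for $\autoref{prp:Classes-xrr0}$, namely to reduce to the separable case via $\autoref{prp:xrr-from-rr-mult}$ and then invoke known computations of the real rank of the multiplier (corona) algebra for separable algebras in each of the three classes. First I would observe that each of the three properties -- ``simple with real rank zero and stable rank one'', ``simple and purely infinite'', and ``simple, $\mathcal{Z}$-stable with real rank zero'' -- satisfies the L\"{o}wenheim--Skolem condition, so that for a \ca{} $A$ in the relevant class there is a club $\mathcal{F}$ of separable sub-\ca{s} $B \subseteq A$ that each remain in the same class. For the properties ``real rank zero'', ``stable rank one'', ``simple'', and ``simple and purely infinite'' this is already recorded in $\autoref{pgr:LS}$; for $\mathcal{Z}$-stability one uses that it is axiomatizable (or separably inheritable), so that the L\"{o}wenheim--Skolem condition holds, and one may intersect the relevant clubs to obtain a single club whose members lie in the full intersection of properties.

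The heart of the argument is then, for each class, a separable-case input bounding $\rr(M(B))$ by $1$, after which $\autoref{prp:xrr-from-rr-mult}$ immediately gives $\xrr(A) \leq 1$. For~(1), if $B$ is separable, simple, with $\rr(B)=0$ and $\sr(B)=1$, the corona algebra $M(B)/B$ should have real rank at most one by the work of Lin and Zhang; in fact one expects $\rr(M(B)) \leq 1$ directly, and the bound then passes through. For~(2), if $B$ is separable, simple and purely infinite, then its corona is again purely infinite and one appeals to the corresponding result of Zhang \cite{Zha92RR0CoronaMultiplier1} giving $\rr(M(B)) \leq 1$. For~(3), if $B$ is separable, simple, $\mathcal{Z}$-stable with $\rr(B)=0$, the required bound $\rr(M(B)) \leq 1$ should follow from the structure theory of $\mathcal{Z}$-stable algebras together with Brown's higher real rank estimates \cite{Bro16HigherRRSR}.

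The main obstacle is entirely in locating and citing the correct separable-case computation of $\rr(M(B))$ in each of the three cases; the reduction machinery is already in place and is uniform across all three parts, so once the three inputs $\rr(M(B)) \leq 1$ are secured, the three proofs are formally identical and very short. I would therefore structure the proof as three parallel paragraphs, each of the form: invoke the L\"{o}wenheim--Skolem condition to obtain a club of separable $B$ in the class, cite the appropriate result to get $\rr(M(B)) \leq 1$, and conclude $\xrr(A) \leq 1$ by $\autoref{prp:xrr-from-rr-mult}$. The only genuine subtlety is case~(3), where $\mathcal{Z}$-stability must be shown to have the L\"{o}wenheim--Skolem condition and where the corona real rank bound is least classical; I would be prepared to supply an additional reference or a short auxiliary argument there, but I expect no new difficulty beyond assembling the literature, since the conceptual content has already been extracted into $\autoref{prp:xrr-from-rr-mult}$ and $\autoref{prp:xrrNonseparable}$.
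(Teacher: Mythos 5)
Your framework for parts~(1) and~(2) matches the paper's: in each case one produces a club of separable sub-\ca{s} lying in the same class, bounds $\rr(M(B))$ by $1$ for each member $B$, and concludes via \autoref{prp:xrr-from-rr-mult}. For the record, the concrete separable inputs are $\rr(M(B)/B)=0$ (Lin's \cite[Theorem~15]{Lin93ExpRank} for part~(1); Zhang's results in \cite{Zha92RR0CoronaMultiplier1} for part~(2), where one first invokes his dichotomy that a separable, simple, purely infinite \ca{} is unital or stable, and also that it has real rank zero), combined with the bound $\rr(M(B)) \leq \rr(B) + \rr(M(B)/B) + 1$ from \autoref{prp:rrExt}; this is exactly the ``expected'' estimate you gesture at, so (1) and (2) are fine modulo pinning down these citations.

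Part~(3), however, has a genuine gap. You propose to run the same reduction for the class of simple, $\mathcal{Z}$-stable \ca{s} of real rank zero and then cite a corona real rank bound for separable members of that class, but ``the structure theory of $\mathcal{Z}$-stable algebras together with Brown's higher real rank estimates'' is a placeholder rather than an argument, and no such direct corona computation is among the results the paper draws on. The paper avoids any new corona computation here: by R{\o}rdam's dichotomy \cite[Theorem~4.1.10]{Ror02Classification}, a simple, $\mathcal{Z}$-stable \ca{} $A$ is either purely infinite --- in which case part~(2) applies --- or stably finite, in which case one picks a nonzero projection $p \in A$ (possible since $\rr(A)=0$), observes that $pAp$ is unital, simple, stably finite and $\mathcal{Z}$-stable, hence has stable rank one by \cite[Theorem~6.7]{Ror04StableRealRankZ}, so that $A$ itself has stable rank one and part~(1) applies. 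This reduction also sidesteps the question of whether $\mathcal{Z}$-stability satisfies the L\"{o}wenheim--Skolem condition, which your plan would additionally have to address.
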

\begin{proof}
(1)
Let $A$ be a simple \ca{} with $\rr(A)=0$ and $\sr(A)=1$.
As noted in \autoref{pgr:LS}, the properties `simplicity', `real rank zero', and `stable rank one' each satisfy the L\"{o}wenheim-Skolem condition.
Thus, there exists a club $\mathcal{F}$ of separable, simple sub-\ca{s} $B \subseteq A$ with $\rr(B)=0$ and $\sr(B)=1$.
For each $B\in\mathcal{F}$, we have $\rr(M(B)/B)=0$ by \cite[Theorem~15]{Lin93ExpRank}.
Applying \autoref{prp:rrExt}, we get
\[
\rr(M(B)) \leq \rr(B) + \rr(M(B)/B) + 1 = 1.
\]
Now the result follows from \autoref{prp:xrr-from-rr-mult}.

(2)
Let $A$ be a simple, purely infinite \ca{}.
Using that the property `simple and purely infinite' satisfies the L\"{o}wenheim-Skolem condition, we obtain a club $\mathcal{F}$ of separable, simple, purely infinite sub-\ca{s} of $A$.
Each $B\in\mathcal{F}$ is either unital or stable by \cite[Theorem~1.2(i)]{Zha92RR0CoronaMultiplier1}.
We deduce that $\rr(M(B)/B)=0$, using \cite[Corollary~2.6(i)]{Zha92RR0CoronaMultiplier1} in the stable case.
By \cite[Theorem~1.2(ii)]{Zha92RR0CoronaMultiplier1}, we also have $\rr(B) = 0$, and thus $\rr(M(B)) \leq 1$ by \autoref{prp:rrExt}.
Now the result follows from \autoref{prp:xrr-from-rr-mult}.

(3) 
Let $A$ be a simple, $\mathcal{Z}$-stable \ca{} with $\rr(A)=0$.
It follows from \cite[Theorem~4.1.10]{Ror02Classification} that $A$ is either purely infinite or stably finite.
In the first case, the result follows from ~(2).
In the second case, take any nonzero projection $p \in A$.
Then the hereditary sub-\ca{} $pAp$ is unital, simple, stably finite, and $\mathcal{Z}$-stable, and therefore has stable rank one by \cite[Theorem~6.7]{Ror04StableRealRankZ}.
It follows that $A$ has stable rank one as well, and now the result follows from~(1).
\end{proof}

The above results raise the problem to determine the possible values of the extension real rank for \ca{s} with real rank zero.
Concretely, we ask the following questions:

\begin{qst}
Does every simple \ca{} with real rank zero have extension real rank at most one?

Does every (nonsimple) \ca{} with real rank zero and stable rank one have extension real rank at most one?
\end{qst}

In fact, it is conceivable that every $\sigma$-unital \ca{} $A$ with real rank zero satisfies $\rr(M(A)) \leq 1$.

\begin{exa}
\label{exa:StableCalkin}
Let $A$ be a $\sigma$-unital, stable, simple, purely infinite \ca{} with $K_1(A)\neq 0$.
(For example, $A=Q\otimes\KK$ for the Calkin algebra $Q$.)
Then
\[
\rr(A) = 0 < 1 = \xrr(A) = \rr(M(A)).
\]

Indeed, as noted in the proof of \autoref{prp:Classes-xrr1}(2), we have $\rr(A)=0$ and $\rr(M(A)/A)=0$ by results in \cite{Zha92RR0CoronaMultiplier1}, and thus $\rr(M(A)) \leq 1$ by \autoref{prp:rrExt}.
Applying \autoref{prp:Char-xrr0} at the first step, and applying \autoref{prp:rr-xrr-rrM} at the second step, we get
\[
1 \leq \xrr(A) \leq \rr(M(A)) \leq 1.
\]
This shows that the real rank can be strictly smaller than the extension real rank.
\end{exa}

For an overview on the Corona Factorization Property, we refer to \cite{Ng06CFP}.

\begin{prp}[Brown]
\label{prp:StableCFP}
Let $A$ be a separable, stable \ca{} with an approximate unit of projections and the Corona Factorization Property.
Then $\xrr(A) \leq 1$.
\end{prp}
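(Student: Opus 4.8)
The strategy is to reduce the statement to a known real-rank computation for the corona algebra $M(A)/A$ via the machinery of the extension real rank. By \autoref{prp:rr-xrr-rrM}, it suffices to establish $\rr(M(A)) \leq 1$, and since $M(A)$ fits into the extension $0 \to A \to M(A) \to M(A)/A \to 0$, \autoref{prp:rrExt} gives $\rr(M(A)) \leq \rr(A) + \rr(M(A)/A) + 1$. The plan is therefore to show that both $\rr(A) = 0$ and $\rr(M(A)/A) = 0$, for then the bound yields $\rr(M(A)) \leq 1$ and hence $\xrr(A) \leq \rr(M(A)) \leq 1$, as desired.

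First I would argue that $\rr(A) = 0$. Since $A$ is stable with an approximate unit of projections, $A$ is an inductive limit (or can be approximated by) the corners $pAp$ for projections $p$ in the approximate unit, and stability forces these corners to exhibit real rank zero; more directly, a stable \ca{} with an approximate unit of projections has real rank zero because each such corner absorbs matrix amplifications and the relevant liftings of unimodular self-adjoint tuples can be performed inside $A \otimes \KK$. I would cite the appropriate structural result rather than reprove it. The more substantial ingredient is $\rr(M(A)/A) = 0$ for the corona algebra, which is precisely where the Corona Factorization Property enters.

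The key step I expect to be the main obstacle is the vanishing of the real rank of the corona algebra, namely $\rr(M(A)/A) = 0$. This is the deep part and should invoke Brown's work \cite{Bro16HigherRRSR}: for a separable, stable \ca{} $A$ with an approximate unit of projections and the Corona Factorization Property, the corona algebra has real rank zero. I would state this as the crucial input, attributing it to Brown (consistent with the proposition being labeled \textbf{Brown}), and then simply assemble the bound. The heart of such a result is that the Corona Factorization Property controls the behavior of full projections and the ambient $K$-theoretic obstructions in $M(A)/A$, ensuring that self-adjoint elements of the corona can be perturbed to invertibles; the approximate unit of projections provides the supply of projections needed to carry out these perturbations inside corners.

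Assembling the pieces: with $\rr(A) = 0$ and $\rr(M(A)/A) = 0$ in hand, \autoref{prp:rrExt} applied to the extension $0 \to A \to M(A) \to M(A)/A \to 0$ gives
\[
\rr(M(A)) \leq \rr(A) + \rr(M(A)/A) + 1 = 0 + 0 + 1 = 1,
\]
and then \autoref{prp:rr-xrr-rrM} yields $\xrr(A) \leq \rr(M(A)) \leq 1$. The only genuine mathematical content beyond this routine bookkeeping is the corona real-rank-zero statement, which I would prove or cite separately; everything else is a direct application of the general results established earlier in the paper.
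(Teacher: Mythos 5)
There is a genuine gap. Your first ingredient, that a separable, stable \ca{} with an approximate unit of projections (and the CFP) has $\rr(A)=0$, is false: the stabilized Jiang--Su algebra $\mathcal{Z}\otimes\KK$ satisfies all the hypotheses of the proposition, yet $\rr(\mathcal{Z}\otimes\KK)=1$. Stability plus an approximate unit of projections does not force real rank zero --- real rank zero requires an abundance of projections in \emph{every} hereditary sub-\ca{}, not just an approximate unit of them. With $\rr(A)=1$ your bookkeeping via \autoref{prp:rrExt} only yields $\rr(M(A))\leq \rr(A)+\rr(M(A)/A)+1 \leq 2$, hence $\xrr(A)\leq 2$; this is precisely the weaker estimate recorded in the closing Remarks of the paper, where \autoref{prp:StableCFP} is then invoked to improve the bound for $\mathcal{Z}\otimes\KK$ from $2$ to $1$. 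So your route cannot recover the full statement.

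There is also a structural problem with the reduction itself. Passing through $\rr(M(A))\leq 1$ via \autoref{prp:rr-xrr-rrM} proves something strictly stronger than $\xrr(A)\leq 1$, and the inequality $\xrr(A)\leq\rr(M(A))$ can be strict even for separable algebras (see \autoref{exa:xrr-rrMult}); it is not known that $\rr(M(A))\leq 1$ or $\rr(M(A)/A)=0$ holds in the generality of the CFP hypotheses --- the known corona real-rank-zero theorems (Zhang, Lin, Lin--Ng, Ng) all require additional structure. The paper's proof is a one-line citation to the \emph{proof} of \cite[Corollary~3.15]{Bro16HigherRRSR}: what Brown actually establishes there is the perturbation property for unimodular self-adjoint tuples modulo the ideal (i.e.\ the properties $(\Lambda_n)$ for $n\geq 1$), working directly with the extension rather than with the real rank of $M(A)$ or of the corona. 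You correctly identified that the deep input is Brown's, but you attributed to him the wrong statement.
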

\begin{proof}
This is shown in the proof of \cite[Corollary~3.15]{Bro16HigherRRSR}.
\end{proof}

\begin{qst}
Does every separable, stable \ca{} with the Corona Factorization Property have extension real rank at most one?
\end{qst}

In fact, it is possible that every separable, stable \ca{} has extension real rank at most one.
A natural test case is the algebra $C(X)\otimes\KK$, where $X$ is the product of countably many copies of the two-sphere.
By \cite[Theorem~2.3]{Ng06CFP}, $C(X)\otimes\KK$ does not have the Corona Factorization Property.
It would also be interesting to compute the real rank of $M(C(X)\otimes\KK)$.

\begin{exa}
\label{exa:CFP}
By \cite[Corollary~3.5]{Rob11NuclDimComp}, $\sigma$-unital \ca{s} with finite nuclear dimension have the Corona Factorization Property.
Thus, if $A$ is a unital, separable \ca{} with finite nuclear dimension, then for every extension
\[
0 \to A\otimes\KK \to E \to B \to 0
\]
we have $\rr(E) = \rr(B)$ if $\rr(B) \geq 1$; 
we have $\rr(E)=1$ if $\rr(A)\neq 0$ and $\rr(B)=0$;
and if $\rr(A)=\rr(B)=0$ then we have $\rr(E)=0$ or $\rr(E)=1$ depending on whether the index map $K_0(B) \to K_1(A)$ vanishes.
\end{exa}

\begin{rmks}
(1)
There exist (nonunital) simple, separable \ca{s} $A$ with $\rr(A)=1$ and $\rr(M(A)/A)=0$.
In this case, we obtain
\[
\xrr(A) \leq \rr(M(A)) \leq \rr(A) + \rr(M(A)/A) + 1 = 2,
\]

For example, this applies to the stabilization of the Jiang-Su algebra $\mathcal{Z}$ by \cite[Theorem~5.8]{LinNg16CoronaStableJiangSu}, and to simple, nonunital, separable \ca{s} with strict comparison of positive elements by traces, projection injectivity and surjectivity, and quasicontinuous scale by \cite[Theorem~3.8]{Ng22RR0PICorona}.
In particular, we obtain $\xrr(\mathcal{Z}\otimes\KK) \leq 2$.
Using \autoref{prp:StableCFP}, we see that $\xrr(\mathcal{Z}\otimes\KK) = 1$.

(2)
The real rank of a \ca{s} records whether the self-adjoint $n$-tuples that generate the \ca{} as a left ideal are dense.
The \emph{generator rank} is defined analogously and records whether the self-adjoint $n$-tuples that generate the \ca{} as a \ca{} are dense.
By \cite[Proposition~3.10]{Thi21GenRnk}, the real rank of a \ca{} is dominated by its generator rank.
It is unclear if the extension real rank is dominated by the generator rank.

A unital, separable \ca{} $A$ has generator rank at most one if and only if its generators form a dense subset, that is, for every $a \in A$ and $\varepsilon>0$ there exists $b \in A$ such that $\| a - b \| < \varepsilon$ and $A = C^*(b)$;
see \cite[Remark~3.7]{Thi21GenRnk}.
It was shown in \cite{Thi22grZstableRR0, Thi23grSubhom} that every simple, nuclear, classifiable \ca{s} satisfying the Universal Coefficient Theorem has generator rank at most one.

It would also be interesting to develop a theory of `extension generator rank' in order to compute the generator rank of extensions.
\end{rmks}


\begin{thebibliography}{AVTV23}

\bibitem[AEPT76]{AkeEllPedTom76DerivMultCAlg}
\bgroup\scshape{}C.~A. Akemann\egroup{}, \bgroup\scshape{}G.~A.
  Elliott\egroup{}, \bgroup\scshape{}G.~K. Pedersen\egroup{}, and
  \bgroup\scshape{}J.~Tomiyama\egroup{}, Derivations and multipliers of \ca{s},
   \emph{Amer. J. Math.} \textbf{98} (1976), 679--708.

\bibitem[AVTV23]{AsaThiVil23arX:RksSoftOps}
\bgroup\scshape{}M.~A. Asadi-Vasfi\egroup{},
  \bgroup\scshape{}H.~Thiel\egroup{}, and \bgroup\scshape{}E.~Vilalta\egroup{},
  Ranks of soft operators in nowhere scattered \ca{s}, preprint
  (arXiv:2310.00663 [math.OA]), 2023.

\bibitem[BE91]{BegEva91RRMatrixValued}
\bgroup\scshape{}E.~J. Beggs\egroup{} and \bgroup\scshape{}D.~E.
  Evans\egroup{}, The real rank of algebras of matrix valued functions,
  \emph{Internat. J. Math.} \textbf{2} (1991), 131--138.

\bibitem[Bla98]{Bla98KThy}
\bgroup\scshape{}B.~Blackadar\egroup{}, \emph{{$K$}-theory for operator
  algebras}, second ed., \emph{Mathematical Sciences Research Institute
  Publications} \textbf{5}, Cambridge University Press, Cambridge, 1998.

\bibitem[Bla06]{Bla06OpAlgs}
\bgroup\scshape{}B.~Blackadar\egroup{}, \emph{Operator algebras},
  \emph{Encyclopaedia of Mathematical Sciences} \textbf{122}, Springer-Verlag,
  Berlin, 2006, Theory of \ca{s} and von Neumann algebras, Operator Algebras
  and Non-commutative Geometry, III.

\bibitem[Bro16]{Bro16HigherRRSR}
\bgroup\scshape{}L.~G. Brown\egroup{}, On higher real and stable ranks for
  {$CCR$} \ca{s},  \emph{Trans. Amer. Math. Soc.} \textbf{368} (2016),
  7461--7475.

\bibitem[BP91]{BroPed91CAlgRR0}
\bgroup\scshape{}L.~G. Brown\egroup{} and \bgroup\scshape{}G.~K.
  Pedersen\egroup{}, \ca{s} of real rank zero,  \emph{J. Funct. Anal.}
  \textbf{99} (1991), 131--149.

\bibitem[BP09]{BroPed09Limits}
\bgroup\scshape{}L.~G. Brown\egroup{} and \bgroup\scshape{}G.~K.
  Pedersen\egroup{}, Limits and \ca{s} of low rank or dimension,  \emph{J.
  Operator Theory} \textbf{61} (2009), 381--417.

\bibitem[DH22]{DowHar22ZeroDimFSpNotStrZeroDim}
\bgroup\scshape{}A.~Dow\egroup{} and \bgroup\scshape{}K.~P. Hart\egroup{}, A
  zero-dimensional {$F$}-space that is not strongly zero-dimensional,
  \emph{Topology Appl.} \textbf{310} (2022), Paper No. 108042, 8.

\bibitem[EH95]{Elh95RRExt}
\bgroup\scshape{}N.~Elhage~Hassan\egroup{}, Rang r\'eel de certaines
  extensions,  \emph{Proc. Amer. Math. Soc.} \textbf{123} (1995), 3067--3073.

\bibitem[Far19]{Far19BookSetThyCAlg}
\bgroup\scshape{}I.~Farah\egroup{}, \emph{Combinatorial set theory of \ca{s}},
  \emph{Springer Monographs in Mathematics}, Springer, Cham, [2019] \copyright
  2019.

\bibitem[FHL{\etalchar{+}}21]{FarHarLupRobTikVigWin21ModelThy}
\bgroup\scshape{}I.~Farah\egroup{}, \bgroup\scshape{}B.~Hart\egroup{},
  \bgroup\scshape{}M.~Lupini\egroup{}, \bgroup\scshape{}L.~Robert\egroup{},
  \bgroup\scshape{}A.~Tikuisis\egroup{}, \bgroup\scshape{}A.~Vignati\egroup{},
  and \bgroup\scshape{}W.~Winter\egroup{}, Model theory of \ca{s},  \emph{Mem.
  Amer. Math. Soc.} \textbf{271} (2021), viii+127.

\bibitem[GK18]{GhaKos18ExtCpctOpsTrivMult}
\bgroup\scshape{}S.~Ghasemi\egroup{} and
  \bgroup\scshape{}P.~Koszmider\egroup{}, An extension of compact operators by
  compact operators with no nontrivial multipliers,  \emph{J. Noncommut. Geom.}
  \textbf{12} (2018), 1503--1529.

\bibitem[KO95]{KodOsa95RRTensProd}
\bgroup\scshape{}K.~Kodaka\egroup{} and \bgroup\scshape{}H.~Osaka\egroup{},
  Real rank of tensor products of \ca{s},  \emph{Proc. Amer. Math. Soc.}
  \textbf{123} (1995), 2213--2215.

\bibitem[KO01]{KodOsa01FS}
\bgroup\scshape{}K.~Kodaka\egroup{} and \bgroup\scshape{}H.~Osaka\egroup{},
  F{S}-property for \ca{s},  \emph{Proc. Amer. Math. Soc.} \textbf{129} (2001),
  999--1003.

\bibitem[LR95]{LinRor95ExtLimitCircle}
\bgroup\scshape{}H.~X. Lin\egroup{} and \bgroup\scshape{}M.~R{\o}rdam\egroup{},
  Extensions of inductive limits of circle algebras,  \emph{J. London Math.
  Soc. (2)} \textbf{51} (1995), 603--613.

\bibitem[Lin93]{Lin93ExpRank}
\bgroup\scshape{}H.~Lin\egroup{}, Exponential rank of \ca{s} with real rank
  zero and the {B}rown-{P}edersen conjectures,  \emph{J. Funct. Anal.}
  \textbf{114} (1993), 1--11.

\bibitem[LN16]{LinNg16CoronaStableJiangSu}
\bgroup\scshape{}H.~Lin\egroup{} and \bgroup\scshape{}P.~W. Ng\egroup{}, The
  corona algebra of the stabilized {J}iang-{S}u algebra,  \emph{J. Funct.
  Anal.} \textbf{270} (2016), 1220--1267.

\bibitem[NOP01]{NagOsaPhi01RanksAlgCts}
\bgroup\scshape{}M.~Nagisa\egroup{}, \bgroup\scshape{}H.~Osaka\egroup{}, and
  \bgroup\scshape{}N.~C. Phillips\egroup{}, Ranks of algebras of continuous
  \ca{} valued functions,  \emph{Canad. J. Math.} \textbf{53} (2001),
  979--1030.

\bibitem[Ng06]{Ng06CFP}
\bgroup\scshape{}P.~W. Ng\egroup{}, The corona factorization property,  in
  \emph{Operator theory, operator algebras, and applications}, \emph{Contemp.
  Math.} \textbf{414}, Amer. Math. Soc., Providence, RI, 2006, pp.~97--110.

\bibitem[Ng22]{Ng22RR0PICorona}
\bgroup\scshape{}P.~W. Ng\egroup{}, Real rank zero for purely infinite corona
  algebras,  \emph{Rocky Mountain J. Math.} \textbf{52} (2022), 243--261.

\bibitem[Osa99]{Osa99NonzeroRR}
\bgroup\scshape{}H.~Osaka\egroup{}, Certain \ca{s} with non-zero real rank and
  extremal richness,  \emph{Math. Scand.} \textbf{85} (1999), 79--86.

\bibitem[Pea75]{Pea75DimThy}
\bgroup\scshape{}A.~R. Pears\egroup{}, \emph{Dimension theory of general
  spaces}, Cambridge University Press, Cambridge, England-New York-Melbourne,
  1975.

\bibitem[Ped79]{Ped79CAlgsAutGp}
\bgroup\scshape{}G.~K. Pedersen\egroup{}, \emph{\ca{s} and their automorphism
  groups}, \emph{Lond. Math. Soc. Monogr.} \textbf{14}, Academic Press, London,
  1979.

\bibitem[Rob11]{Rob11NuclDimComp}
\bgroup\scshape{}L.~Robert\egroup{}, Nuclear dimension and {$n$}-comparison,
  \emph{M\"{u}nster J. Math.} \textbf{4} (2011), 65--71.

\bibitem[R{\o}r02]{Ror02Classification}
\bgroup\scshape{}M.~R{\o}rdam\egroup{}, Classification of nuclear, simple
  \ca{s},  in \emph{Classification of nuclear \ca{s}. {E}ntropy in operator
  algebras}, \emph{Encyclopaedia Math. Sci.} \textbf{126}, Springer, Berlin,
  2002, pp.~1--145.

\bibitem[R{\o}r04]{Ror04StableRealRankZ}
\bgroup\scshape{}M.~R{\o}rdam\egroup{}, The stable and the real rank of
  {$\mathcal{Z}$}-absorbing \ca{s},  \emph{Internat. J. Math.} \textbf{15}
  (2004), 1065--1084.

\bibitem[Rud91]{Rud91FABook2ed}
\bgroup\scshape{}W.~Rudin\egroup{}, \emph{Functional analysis}, second ed.,
  \emph{International Series in Pure and Applied Mathematics}, McGraw-Hill,
  Inc., New York, 1991.

\bibitem[Sak68]{Sak68DerivationsSimple}
\bgroup\scshape{}S.~Sakai\egroup{}, Derivations of simple \ca{s},  \emph{J.
  Functional Analysis} \textbf{2} (1968), 202--206.

\bibitem[Thi13]{Thi13TopDimTypeI}
\bgroup\scshape{}H.~Thiel\egroup{}, The topological dimension of type {I}
  {$C^*$}-algebras,  in \emph{Operator algebra and dynamics}, \emph{Springer
  Proc. Math. Stat.} \textbf{58}, Springer, Heidelberg, 2013, pp.~305--328.

\bibitem[Thi21]{Thi21GenRnk}
\bgroup\scshape{}H.~Thiel\egroup{}, The generator rank of \ca{s},  \emph{J.
  Funct. Anal.} \textbf{280} (2021), 108874, 34.

\bibitem[Thi22]{Thi22grZstableRR0}
\bgroup\scshape{}H.~Thiel\egroup{}, Generators in {$\mathcal{Z}$}-stable \ca{s}
  of real rank zero,  \emph{J. Noncommut. Geom.} \textbf{16} (2022),
  1259--1281.

\bibitem[Thi23]{Thi23grSubhom}
\bgroup\scshape{}H.~Thiel\egroup{}, The generator rank of subhomogeneous
  \ca{s},  \emph{Canad. J. Math.} \textbf{75} (2023), 1314--1342.

\bibitem[Thi24]{Thi24arX:RRMult}
\bgroup\scshape{}H.~Thiel\egroup{}, Real rank of some multiplier algebras,
  preprint (arXiv:2402.01022), 2024.

\bibitem[TV23]{ThiVil23Glimm}
\bgroup\scshape{}H.~Thiel\egroup{} and \bgroup\scshape{}E.~Vilalta\egroup{},
  The {G}lobal {G}limm {P}roperty,  \emph{Trans. Amer. Math. Soc.} \textbf{376}
  (2023), 4713--4744.

\bibitem[TV24a]{ThiVil24NowhereScattered}
\bgroup\scshape{}H.~Thiel\egroup{} and \bgroup\scshape{}E.~Vilalta\egroup{},
  Nowhere scattered \ca{s},  \emph{J. Noncommut. Geom.} \textbf{18} (2024),
  231--263.

\bibitem[TV24b]{ThiVil24SoftOps}
\bgroup\scshape{}H.~Thiel\egroup{} and \bgroup\scshape{}E.~Vilalta\egroup{},
  Soft operators in \ca{s},  \emph{J. Math. Anal. Appl.} \textbf{536} (2024),
  Paper No. 128167.

\bibitem[WO93]{Weg93KThyBook}
\bgroup\scshape{}N.~E. Wegge-Olsen\egroup{}, \emph{{$K$}-theory and \ca{s}},
  \emph{Oxford Science Publications}, The Clarendon Press, Oxford University
  Press, New York, 1993, A friendly approach.

\bibitem[Zha91]{Zha91QuasidiagInterpolMultProj}
\bgroup\scshape{}S.~Zhang\egroup{}, {$K_1$}-groups, quasidiagonality, and
  interpolation by multiplier projections,  \emph{Trans. Amer. Math. Soc.}
  \textbf{325} (1991), 793--818.

\bibitem[Zha92]{Zha92RR0CoronaMultiplier1}
\bgroup\scshape{}S.~Zhang\egroup{}, Certain \ca{s} with real rank zero and
  their corona and multiplier algebras. {I},  \emph{Pacific J. Math.}
  \textbf{155} (1992), 169--197.

\end{thebibliography}

\providecommand{\etalchar}[1]{$^{#1}$}
\providecommand{\bysame}{\leavevmode\hbox to3em{\hrulefill}\thinspace}
\providecommand{\noopsort}[1]{}
\providecommand{\mr}[1]{\href{http://www.ams.org/mathscinet-getitem?mr=#1}{MR~#1}}
\providecommand{\zbl}[1]{\href{http://www.zentralblatt-math.org/zmath/en/search/?q=an:#1}{Zbl~#1}}
\providecommand{\jfm}[1]{\href{http://www.emis.de/cgi-bin/JFM-item?#1}{JFM~#1}}
\providecommand{\arxiv}[1]{\href{http://www.arxiv.org/abs/#1}{arXiv~#1}}
\providecommand{\doi}[1]{\url{http://dx.doi.org/#1}}
\providecommand{\MR}{\relax\ifhmode\unskip\space\fi MR }
\providecommand{\MRhref}[2]{%
  \href{http://www.ams.org/mathscinet-getitem?mr=#1}{#2}
}
\providecommand{\href}[2]{#2}

\end{document}